\numberwithin{equation}{section}
\newcommand{\N}{\mathbb{N}}
\newtheorem{thm}{Theorem} 
\newtheorem{prop}{Proposition}
\newtheorem{lem}{Lemma}
\newtheorem{cor}{Corollary}
\theoremstyle{definition}
\renewcommand{\mod}[1]{\hspace{-2.9mm}\pmod{#1}}
\newcommand{\e}{{\rm e}}
\newcommand{\ben}{\begin{enumerate}}
\newcommand{\een}{\end{enumerate}}
\newcommand{\eit}{\begin{itemize}}
\newcommand{\beq}{\begin{equation}}
\newcommand{\eeq}{\end{equation}}
\newcommand{\cal}{\mathcal}
\newcommand{\lab}{\label}
\newcommand{\colt}[2]{\genfrac{}{}{0pt}{1}{#1}{#2}}
\newcommand{\tolt}[3]{\colt{#1}{\colt{#2}{#3}}}
\definecolor{red}{rgb}{1,0,0}
\definecolor{blue}{rgb}{.2,.6,.75}
\definecolor{green}{rgb}{.4,.7,.4}
\renewcommand{\leq}{\leqslant}
\renewcommand{\geq}{\geqslant}
\renewcommand{\d}{\mathrm{d}}
\renewcommand{\d}{\mathrm{d}}
\begin{document}

\title{Exponential sums with multiplicative coefficients and applications}

\author{R\'egis de la Bret\`eche and Andrew Granville}
\date{}

\address{
Institut de Math\'ematiques de Jussieu-Paris Rive Gauche\\
Universit\'e de Paris, Sorbonne Universit\'e, CNRS UMR 7586\\
Case Postale 7012\\
F-75251 Paris CEDEX 13\\ France} 
\email{regis.delabreteche@imj-prg.fr} 
\address{D{\'e}partment  de Math{\'e}matiques et Statistique,   Universit{\'e} de
Montr{\'e}al, CP 6128 succ Centre-Ville, Montr{\'e}al, QC  H3C 3J7, Canada; and 
Department of Mathematics, University College London, Gower Street, London WC1E 6BT, England.
}
\email{andrew@dms.umontreal.ca} 

\thanks{  
A.G.~est partiellement soutenu par une bour\-se  de la
Conseil de recherches en sciences naturelles et en g\' enie du Canada, and is also partially supported a
European Research Council  grant, agreement n$^{\text{o}}$ 670239. 
Part of this work was undertaken while A.G.~was in Paris, his stay was supported by Universit\'e Paris--Diderot,
Universit\'e Pierre et Marie Curie, Universit\'e d'Orsay  and Fondation
Sciences Math\'ematiques de Paris. The hospitality and financial support of these
institutions is gratefully acknowledged.
We thank J\"org Br\"udern, Oleksiy Klurman,  Lilian  Matthiesen,  Trevor Wooley and particularly K.~Soundararajan for useful discussions and
 correspondence.
 }

\begin{abstract}
We show that if an exponential sum  with multiplicative
coefficients is large then the associated multiplicative function is
``pretentious''. This leads to applications in the circle method, and a natural interpretation of the local-global principle.
\end{abstract}

\maketitle

\newcommand{\cbar}{\overline{\chi}}
\newcommand{\pbar}{\overline{\psi}}
\newcommand{\sumstar}{\sideset\and ^* \to \sum}

\section{Introduction}
Diverse investigations in analytic number theory involve sums like
$$
R_f(\alpha, x):=\sum_{n\leq x} f(n)\e(n\alpha)
$$
where $\e(t)=\e^{2i\pi t}$ for $t\in \mathbb R$  and $f $ is a multiplicative function. For simplicity we will restrict our attention throughout to the class $\mathcal M$ of completely multiplicative functions $f$ for which $|f(n)|\leq 1$ for all $n$; and let 
\[ F(s)=\sum_{n\geq 1} \frac{f(n)}{n^s}.\]
One can approximate any $\alpha\in \mathbb R/\mathbb Z$ by a rational $\tfrac aq$ with $(a,q)=1$ and $q\leq Q$ (with, say, $Q= x/(\log x)^{2+\varepsilon}$), so that 
\[
\left| \alpha - \frac aq\right| < \frac 1{q^2}.
\]
If $q>(\log x)^{2+\varepsilon}$ then $\alpha$ is on a \emph{minor arc} and Montgomery and Vaughan  \cite{MV77}  proved  that 
\begin{equation}\label{eq.Bachmann}
\sum_{n\leq x} f(n)\e(n\alpha) \ll \frac x{\log x}.
\end{equation}
There are many examples of $f$ that attain this bound.\footnote{For example, no matter what $f(p)$ equals on the primes  $p\leq \tfrac x2$ we select an angle $\theta$ and   $f(p)=\e(\theta-p\alpha)$ on the primes $p, \tfrac 12x<p\leq x$, so that $\e(\theta)$ points in the same direction as the sum of $f(n)$ over integers $n\leq x$ free of prime factors $>\tfrac 12x$.}  In general we have the folklore conjecture
\begin{equation}\label{1.2}
\sum_{n\leq x} f(n)\e(n\alpha) \ll \frac{x}{\log x}+  \frac {x}{\sqrt q} .
\end{equation}
The $\tfrac x{\sqrt{q}}$-term cannot, in general, be removed since there are many examples for which $R_f(\alpha, x) \gg \tfrac x{\sqrt{q}}$ when $q$ is small.
Our main goal in this paper is to classify these examples and to determine asymptotic formulae for $R_f(\alpha, x)$ in such cases. We will express $R_f(\alpha, x)$ in terms of other quantities that arise naturally in multiplicative number theory:

\begin{thm}\label{thm0} Let $\varepsilon >0$, $f\in \mathcal M$, $x\geq 3$ and $\alpha=a/q + \beta$ where $(a,q)=1$ with $q\leq (\log x)^{2+\varepsilon}$. There exists a primitive Dirichlet character $\chi \pmod r$ where $r$ divides $q$, and a real number $t$ with $|t|<\log x$ for which  
 \[
\sum_{n\leq x} f(n)\e(n\alpha)  = \frac{\cbar(a)\kappa(q)  g(\chi)}{\phi(q)}         I(x, \beta,t)   \sum_{n\leq x} \frac{f(n) \cbar (n)}{n^{it}}  +O\bigg(   \frac{ (1+|\beta| x) x}{ (\log x)^{ 1 - \frac{1}{\sqrt{2}} +o(1)} } \bigg)  ,
 \]
 where $g(\chi)$ is the Gauss sum, $\kappa$ is  defined by the convolution  $f(n)/n^{it} = (\kappa*\chi)(n)$, and we take
  \begin{equation}\label{defI}
I(x,\beta,t) := \frac 1x \int_{0}^x \e(\beta v) v^{it} \d v  .
\end{equation}
 \end{thm}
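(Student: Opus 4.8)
The plan is to split the proof into two stages: reduce the general case $\alpha=a/q+\beta$ to the case $\beta=0$ by partial summation, and then treat $R_f(a/q,x)$ by expanding the additive character over Dirichlet characters and feeding the resulting twisted sums into the quantitative theory of mean values of multiplicative functions. Throughout, saying that ``$f\cbar$ is close to $n^{it}$'' will mean that the pretentious distance $\mathbb{D}(f\cbar,n^{it};x)^2=\sum_{p\le x}\frac{1-\mathrm{Re}(f(p)\cbar(p)p^{-it})}{p}$ is small. For the first stage, write $S(v):=\sum_{n\le v}f(n)\e(na/q)$; Abel summation gives
\[
\sum_{n\le x}f(n)\e(n\alpha)=\e(\beta x)\,S(x)-2\pi i\beta\int_0^x\e(\beta v)\,S(v)\,\d v .
\]
Granting the case $\beta=0$ at every scale $v\le x$, we have $S(v)\sim C\,\frac{v^{it}}{1+it}M(v)$ with $C=\cbar(a)\kappa(q)g(\chi)/\phi(q)$ and $M(v):=\sum_{n\le v}f(n)\cbar(n)n^{-it}$; since $f\cbar n^{-it}$ is a multiplicative function close to $1$, the Lipschitz estimate for such mean values (a standard consequence of Hal\'asz's theorem) lets us replace $M(v)$ by $\tfrac vx M(x)$ with an admissible error, after which
\[
\e(\beta x)\frac{x^{1+it}}{1+it}-2\pi i\beta\int_0^x\e(\beta v)\frac{v^{1+it}}{1+it}\,\d v=\int_0^x\e(\beta v)v^{it}\,\d v=xI(x,\beta,t)
\]
by one integration by parts, yielding the stated main term. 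The factor $1+|\beta|x$ in the error is precisely the cost of this Abel summation.

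For the case $\beta=0$, first I would expand the additive character modulo $q$ into Dirichlet characters and apply the Gauss-sum identities: for a primitive $\chi\bmod r$ one has $\sum_{b\bmod r}\chi(b)\e(nb/r)=\cbar(n)g(\chi)$, and there is the standard (slightly more elaborate) variant for imprimitive characters involving $\mu$ and the prime divisors of $q/r$. Separating off the part of $n$ composed of primes dividing $q$ by complete multiplicativity of $f$, one reaches an identity of the shape
\[
\sum_{n\le x}f(n)\e(na/q)=\sum_{r\mid q}\ \sumstar_{\chi\bmod r}\ \frac{\cbar(a)g(\chi)}{\phi(q)}\Big(K_{q,\chi}\sum_{n\le x}f(n)\cbar(n)+O(E_{q,\chi})\Big),
\]
where $K_{q,\chi}$ is a bounded Euler-type factor supported on the primes of $q$ and the $E_{q,\chi}$ are admissible (they come from the Lipschitz replacements applied to the part of $n$ built from primes dividing $q$). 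Once the parameter $t$ has been selected below, a short local computation identifies $K_{q,\chi}$ with the value $\kappa(q)$ of the statement, where $f(n)/n^{it}=(\kappa*\chi)(n)$.

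It then remains to understand each $\sum_{n\le x}f(n)\cbar(n)$. Here the quantitative Hal\'asz theorem gives a dichotomy: either there is no real $t$ with $|t|<\log x$ for which $f\cbar$ is close to $n^{it}$, and then $\sum_{n\le x}f(n)\cbar(n)\ll x/(\log x)^{1-1/\sqrt2+o(1)}$, which is absorbed into the error; or there is such a $t=t_\chi$, and then partial summation together with the same Lipschitz input gives $\sum_{n\le x}f(n)\cbar(n)=I(x,0,t_\chi)\sum_{n\le x}f(n)\cbar(n)n^{-it_\chi}+O\big(x/(\log x)^{1-1/\sqrt2+o(1)}\big)$. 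Finally a repulsion argument shows that at most one of the $O((\log x)^{O(1)})$ relevant characters can be of the second type: if $f\,\overline{\chi_1}\approx n^{it_1}$ and $f\,\overline{\chi_2}\approx n^{it_2}$ then $\chi_1\overline{\chi_2}\approx n^{i(t_1-t_2)}$, and since $\chi_1\overline{\chi_2}$ is a Dirichlet character of conductor dividing $q\le(\log x)^{2+\varepsilon}$, the classical zero-free region for Dirichlet $L$-functions (with Siegel's theorem handling a possible exceptional real character) forces $\chi_1=\chi_2$ and $|t_1-t_2|$ negligible. Retaining this unique good pair as the $\chi$ and $t$ of the statement and collapsing everything else into the error term, and reading off the constant $\cbar(a)\kappa(q)g(\chi)/\phi(q)$ from the previous step, completes the proof.

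The hard part will be the quantitative Hal\'asz estimate with the precise exponent $1-\tfrac1{\sqrt2}+o(1)$: the elementary form of Hal\'asz's theorem yields a strictly weaker exponent, and extracting $1-\tfrac1{\sqrt2}$ --- the ``$\sqrt2$ phenomenon'' in the theory of mean values of multiplicative functions --- requires the sharp optimisation of the underlying integral, carried out uniformly over the character $\chi$, over $t$ in the range $|t|<\log x$, and over $q\le(\log x)^{2+\varepsilon}$. By comparison, the remaining ingredients are routine: the Gauss-sum bookkeeping assembling $\cbar(a)\kappa(q)g(\chi)/\phi(q)$ from the imprimitive-to-primitive passage and from the prime divisors of $q$, the various Lipschitz replacements, and the fact that only $(\log x)^{O(1)}$ characters occur (so that their number disappears into the $o(1)$).
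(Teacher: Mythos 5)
Your high-level plan matches the paper's: the partial-summation reduction of $\beta\ne 0$ to $\beta=0$ (the identity $\e(\beta x)\frac{x^{1+it}}{1+it}-2\pi i\beta\int_0^x\e(\beta v)\frac{v^{1+it}}{1+it}\,\d v=xI(x,\beta,t)$ and the Lipschitz replacement $M(v)\mapsto(v/x)M(x)$ both appear, essentially verbatim, in the paper's proof of Theorem~\ref{thm1} for $\beta\neq 0$, via \eqref{4.1} and \eqref{adaptTh4}); and for $\beta=0$ the paper likewise expands $\e(na/q)$ over characters via the divisor decomposition $n=m\,q/d$ and reduces to mean values $S_f(x,\chi)$ through the arithmetic-progression estimate \eqref{apestimate}.

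The genuine gap is in how you propose to reach the exponent $1-\tfrac{1}{\sqrt2}$. You argue that at most one $\chi$ can have $f\cbar$ close to some $n^{it}$ and that the rest are controlled by Hal\'asz, and you attribute the uniqueness to a repulsion argument via Dirichlet $L$-function zero-free regions and Siegel's theorem. Quantitatively this does not close: the pretentious triangle inequality only gives that the second-closest pair $(\chi,t)$ satisfies
$\sum_{p\le x}\frac{1-\mathrm{Re}(f(p)\cbar(p)p^{-it})}{p}\ge\big(\tfrac14-o(1)\big)\log\log x$
(the extremal case is two equal distances summing to $(1-o(1))\sqrt{\log\log x}$), and Hal\'asz then yields only $S_f(x,\chi_2)\ll x(\log\log x)/(\log x)^{1/4}$. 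Since $1-\tfrac{1}{\sqrt2}\approx0.293>0.25$, the paper's error term is strictly sharper than anything the naive repulsion-plus-Hal\'asz argument can produce. The actual mechanism is GHS Theorems~1.8 and~1.9 (quoted in the paper as \eqref{apestimate} and \eqref{PLS}), whose proof combines Hal\'asz with a Parseval-type estimate over the full family of characters and optimizes the trade-off to obtain $1-1/\sqrt J$ rather than the weaker $1/4$ (or $(1-1/\sqrt J)^2$) one would get from the elementary argument. You correctly flag this as ``the hard part'' but the route you sketch cannot deliver the exponent, and the theorem as stated would not follow.

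A secondary issue: invoking zero-free regions for Dirichlet $L$-functions and Siegel's theorem is not the right framework here. The repulsion inequality for $q\le(\log x)^{O(1)}$ and $|t|\le\log x$ is established in GHS by purely elementary means --- Mertens' theorem together with the trivial upper bound $|L(1+1/\log x+it,\chi)|\ll\log(q(2+|t|))$, which uses no information about zeros. Bringing in Siegel's theorem introduces ineffectivity the argument does not have, and is at best a detour.
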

 
 The character $\chi$ and the real number $t$ are selected to maximize the sum on the right-hand side. If this sum remains larger than the error term for $x$  in some range (like from $X$ to $X^2$) then there is a unique possibility for  $\chi$ which does not change as $x$ varies, and $t$ varies continuously if at all.

If $r=q$   with $f(n)=\chi(n)n^{it}$ when $(n,q)=1$ then the main term here is 
\[
\frac{\cbar(a)f(q) q^{-it}  \sqrt{q}}{\phi(q)}      I(x, \beta,t)    \sum_{\substack{n\leq x \\(n,q)=1}} 1\ \sim \  \cbar(a)f(q)q^{-it}  \cdot  I(x, \beta,t)   \cdot   \frac x{\sqrt{q}}.
\]
Since (trivially) $|\cbar(a)f(q)q^{-it}|,\ |I(x, \beta,t)|\leq 1$, this supports the folklore conjecture
\eqref{1.2}. The improved bound 
\[
|I(x,\beta,t)| \ll  \frac 1 {\sqrt{1+|\beta| x}}
\]
proved in \eqref{eq: Iintegral},
suggests our refined conjecture,
\begin{equation}\label{1.2b}
\ R_f(\alpha, x)\ll \frac{x}{\log x}+  \frac {x}{ \sqrt{ q(1+|\beta| x)}} .
\end{equation}

In Theorem \ref{thm1} we will prove rather more  than Theorem \ref{thm0}, obtaining an asymptotic series (of similar looking terms) with a better error term.

In the proof of Theorem  \ref{thm0}, we write each $\e(n\alpha)=\e(\tfrac {an}q)\e(n\beta)$ and replace the $\e(\tfrac {an}q)$ by a sum over characters mod $q$; the same idea works for any bounded function of period $q$. Thus, for example, we also prove that if $\xi$ is a character modulo squarefree $q$ and   $(abc,q)=1$   then 
  there exists a constant $c_q$, which depends on $f,\xi,a,b,c$  with $|c_q|\ll \e^{O(\omega(q))} \frac {q^2}{\phi(q)^2}$, such that  
  \begin{equation}\label{1.Gen}
\sum_{\substack{n\leq x \\ (n,q)=1}} f(n) \xi(n+c) \e\left( \frac{an+b\overline{n}}q \right)
 = \frac{c_q}{\sqrt{r}}   \frac{x^{it}}{1+it}  \sum_{n\leq x} \frac{f(n) \cbar (n)}{n^{it}}  +O\bigg(  \frac{x}{(\log x)^{ 1 - \frac{1}{\sqrt{2}} +o(1)}}  \bigg)   
\end{equation}
with $\chi$ and $t$ as in Theorem \ref{thm0}. With additional care one can prove a version of this result in a range like $q\leq x^{1/2}$.
We prove several more general and precise results in    section \ref{sec: twist}.

Obtaining good estimates for $R_f(\alpha, x)$ on the major arcs allows us to obtain asymptotics in various Diophantine problems (weighted by multiplicative functions) using the circle method. The main term in these asymptotics  typically involve an Euler product that can be decomposed into contributions from each prime (a \emph{local-global principle}). 

In the questions  here the roles of small and large prime factors in the asymptiotic formulae are quite different and so we begin by splitting any   $f\in \mathcal M$ into two multiplicative functions
$f=F_sF_\ell$ where $F_s$ involves only the ``small'' prime factors, and $F_\ell$ only the ``large'', and we define
\[
F_s(p)=\begin{cases}  f(p)& \text{ for } p\leq z ,\\
\chi(p)p^{it} & \text{ for }  p>z, \end{cases}
\]
where $\chi$ and $t$ are defined as above, and we will take $z\asymp \log x$.
Throughout we let $\eta=1-\tfrac 2\pi$ and $\tau:=\tfrac{2-\sqrt{2}}3$.
 
 \begin{cor} [Corollary to Theorem \ref{thm2}] Let $\varepsilon >0$. 
Suppose that $A$ and $B$ are sets of positive integers for which $A+B\subset \{ 1,2,\ldots, x\}$ with 
$|A||B|\geq x^2/(\log x)^{\tau-\varepsilon}$. Then 
\[
\frac 1 {|A||B|}  \sum_{\colt{a\in A}{b\in B}} f(a+b)   = 
\frac 1x \sum_{n\leq x} F_\ell (n)  \cdot \frac 1 {|A||B|}  \sum_{\colt{a\in A}{b\in B}} F_s(a+b)
+ o(1) .
\]
\end{cor}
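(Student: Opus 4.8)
The plan is to run the circle method, converting the statement into a comparison of the exponential sums $R_f(-\alpha,x)$ and $R_{F_s}(-\alpha,x)$, and then to apply Theorem~\ref{thm2} on the major arcs and the Montgomery--Vaughan bound \eqref{eq.Bachmann} on the minor arcs. Write $\mathcal A(\alpha):=\sum_{a\in A}\e(a\alpha)$, $\mathcal B(\alpha):=\sum_{b\in B}\e(b\alpha)$ and $M:=\tfrac1x\sum_{n\le x}F_\ell(n)$, so that $|M|\le1$. Since $A+B\subset\{1,\dots,x\}$, orthogonality of additive characters gives $\sum_{a\in A,\,b\in B}g(a+b)=\int_0^1\mathcal A(\alpha)\mathcal B(\alpha)R_g(-\alpha,x)\,\d\alpha$ for every $g\in\mathcal M$ (extending the notation $R_f$ of \S1); taking $g=f$ and $g=F_s$ reduces the corollary to
\[
\int_0^1\mathcal A(\alpha)\mathcal B(\alpha)\bigl(R_f(-\alpha,x)-M\,R_{F_s}(-\alpha,x)\bigr)\,\d\alpha=o(|A||B|).
\]
Throughout I use Parseval, $\int_0^1|\mathcal A|^2=|A|$ and $\int_0^1|\mathcal B|^2=|B|$, hence $\int_0^1|\mathcal A\mathcal B|\le(|A||B|)^{1/2}$ by Cauchy--Schwarz.

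Dissect $\R/\Z$ into Farey arcs of order $x/(\log x)^{2+\varepsilon}$, and let $\mathfrak M$ be the union of the arcs about $a/q$ with $q\le(\log x)^{2+\varepsilon}$, and $\mathfrak m$ the rest. On $\mathfrak m$ the approximating denominator satisfies $q>(\log x)^{2+\varepsilon}$, so \eqref{eq.Bachmann} applied to $f$ and to $F_s$ (both in $\mathcal M$) gives $R_f(-\alpha,x),R_{F_s}(-\alpha,x)\ll x/\log x$ there; hence the $\mathfrak m$-contribution is $\ll\frac x{\log x}\int_0^1|\mathcal A\mathcal B|\ll\frac x{\log x}(|A||B|)^{1/2}$, which is $o(|A||B|)$ because $|A||B|\ge x^2/(\log x)^{\tau-\varepsilon}$ and $\tau<2$.

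On a major arc about $a/q$ (with $q\le(\log x)^{2+\varepsilon}$ and $|\beta|=|\alpha-a/q|\le(\log x)^{2+\varepsilon}/(qx)$) I apply Theorem~\ref{thm2} to $R_f$ and to $R_{F_s}$ and check that the two main terms are proportional, with ratio exactly $M$. Three structural facts do this: (i) since $F_s$ agrees with $f$ at every prime $p\le z$ and equals $\chi(n)n^{it}$ for $p>z$, the pair $(\chi,t)$ selected in Theorem~\ref{thm0} is — by the very construction of $F_s$ — common to $F_s$ and $f$ (one has $\mathbb D(F_s,\chi(n)n^{it};x)\le\mathbb D(f,\chi(n)n^{it};x)$, and one checks that no other character or shift correlates better with $F_s$); (ii) for $q$ composed only of primes $\le z$ one has $f(d)=F_s(d)$ for every $d\mid q$, so the convolution factor $\kappa$ of Theorem~\ref{thm0} agrees for $f$ and $F_s$, whereas the remaining $q$ have a prime factor $>z\asymp\log x$, and then $|\kappa(q)|\,|g(\chi)|/\phi(q)\le|\kappa(q)|\sqrt q/\phi(q)\ll(\log x)^{o(1)}z^{-1/2}$ already makes the main terms of $R_f,R_{F_s}$ individually of size $x(\log x)^{-1/2+o(1)}$; and (iii) a Hal\'asz--Wirsing comparison of mean values gives
\[
\sum_{n\le x}\frac{f(n)\cbar(n)}{n^{it}}=\Bigl(\tfrac1x\sum_{n\le x}F_\ell(n)\Bigr)\sum_{n\le x}\frac{F_s(n)\cbar(n)}{n^{it}}\,\bigl(1+o(1)\bigr),
\]
since both mean values factor as Euler products whose quotient is $\prod_{z<p\le x}(1-\tfrac1p)\bigl(1-f(p)\cbar(p)p^{-it}/p\bigr)^{-1}$ — precisely the asymptotic for $\tfrac1x\sum_{n\le x}F_\ell(n)$. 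Combining (i)--(iii) with Theorem~\ref{thm2} gives, uniformly on $\mathfrak M$, $R_f(-\alpha,x)-M\,R_{F_s}(-\alpha,x)\ll x(\log x)^{-(1-1/\sqrt2)+o(1)}$, the essential gain over Theorem~\ref{thm0} being the removal of the factor $1+|\beta|x$ from the error. As the Farey arcs partition $\R/\Z$, $\sum_{q\le(\log x)^{2+\varepsilon},(a,q)=1}\int_{\mathfrak M(q,a)}|\mathcal A\mathcal B|\le\int_0^1|\mathcal A\mathcal B|\le(|A||B|)^{1/2}$, so the $\mathfrak M$-contribution is $\ll x(\log x)^{-(1-1/\sqrt2)+o(1)}(|A||B|)^{1/2}$; dividing by $|A||B|$ and using $(|A||B|)^{1/2}\ge x(\log x)^{-(\tau-\varepsilon)/2}$ yields $(\log x)^{-(1-1/\sqrt2)+(\tau-\varepsilon)/2+o(1)}=o(1)$, where the identity $1-\tfrac1{\sqrt2}=\tfrac32\tau$ is what makes $\tau=\tfrac{2-\sqrt2}{3}$ the natural exponent in the hypothesis.

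The step I expect to be the main obstacle is the major-arc comparison. Inserting Theorem~\ref{thm0} directly does not suffice, because its error term carries the factor $1+|\beta|x$, and on the low-denominator arcs $|\beta|$ runs up to $(\log x)^{2+\varepsilon}/(qx)$, so $(1+|\beta|x)x$ can exceed the trivial bound $x$ for the sum; one needs the sharpened form in which the error is uniform in $\beta$ (or decays like $(1+|\beta|x)^{-1/2}$, as does the main term), which is exactly the extra strength of Theorem~\ref{thm1}, hence of Theorem~\ref{thm2}. Granting that, the remaining delicate points are the mean-value identity (iii) — a Landau--Selberg--Delange/Hal\'asz computation that must be carried out uniformly, including in the regime $\mathbb D(f,\chi(n)n^{it};x)\to\infty$ where both sides are small — and the bookkeeping showing that the major arcs whose denominator has a prime factor $>z$, though a positive proportion of the total, contribute negligibly thanks to the power of $z\asymp\log x$ saved by the Gauss-sum and convolution factors.
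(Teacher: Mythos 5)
You have re-derived (a version of) Theorem~\ref{thm2} from scratch via the circle method, but the paper's proof of this corollary is a one-line substitution: Theorem~\ref{thm2} already gives an error $O\bigl(x(|A||B|)^{-1/2}(\log x)^{-\tau/2+o(1)}\bigr)$, and the hypothesis $|A||B|\geq x^2/(\log x)^{\tau-\varepsilon}$ forces $(|A||B|)^{1/2}\geq x/(\log x)^{(\tau-\varepsilon)/2}$, so the error is $\ll (\log x)^{-\varepsilon/2+o(1)}=o(1)$. Nothing else is required.

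Beyond the unnecessary length, your re-derivation has a genuine gap, which you flag yourself but then dismiss on a false premise. You set the major arcs to be the Farey arcs about $a/q$ with $q\leq(\log x)^{2+\varepsilon}$, so that $|\beta|x$ ranges up to $(\log x)^{2+\varepsilon}$. You then claim $R_f(-\alpha,x)-M\,R_{F_s}(-\alpha,x)\ll x(\log x)^{-(1-1/\sqrt 2)+o(1)}$ \emph{uniformly} on those arcs, asserting that Theorem~\ref{thm1} (``hence Theorem~\ref{thm2}'') supplies an error term uniform in $\beta$. It does not: Theorem~\ref{thm1}'s error is explicitly $O\bigl((1+|\beta|x)\,\mathrm{Err}_J(x,q)\bigr)$, carrying exactly the same $(1+|\beta|x)$ factor as Theorem~\ref{thm0}. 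On your arcs this factor can be $(\log x)^{2+\varepsilon}$, while the gain is only $(\log x)^{1-1/\sqrt 2}=(\log x)^{3\tau/2}$, so the ``error'' term swamps the trivial bound $x$ and the pointwise comparison fails. The fact that your claimed exponent $1-\tfrac1{\sqrt2}=\tfrac32\tau$ is strictly better than the $\tau/2$ actually appearing in Theorem~\ref{thm2} should itself have been a red flag.

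The paper sidesteps this by choosing the arcs so that $|\beta|x$ is genuinely bounded: with $Q=x/(\log x)^{\tau-\varepsilon}$, major arcs have $q\leq x/Q=(\log x)^{\tau-\varepsilon}$ and $|\beta|\leq 1/(qQ)$, hence $|\beta|x\leq(\log x)^{\tau-\varepsilon}$, giving the uniform bound $E_f(\alpha)\ll x(\log x)^{-\tau/2+o(1)}$ of Lemma~\ref{ArcEstimates}. The price is that the Montgomery--Vaughan bound \eqref{eq.Bachmann} is no longer available on the resulting minor arcs (where one may have $q$ as small as $(\log x)^{\tau-\varepsilon}$, well below the $(\log x)^{2+\varepsilon}$ threshold); instead one takes Bachmann's bound \eqref{1.1} with $R=x/Q$, yielding the weaker but still sufficient $x(\log x)^{-(\tau-\varepsilon)/2+o(1)}$. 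In addition, the paper does not try to compare $R_f$ and $R_{F_s}$ pointwise via your steps (i)--(iii); it writes the major-arc contribution as $\sigma(f)S_{f_*}(x)$, observes directly that $\sigma(f)=\sigma(F_s)$ because $\kappa$ only sees $f(p)$ for $p\leq x/Q\leq z$, and factors $S_{f_*}(x)$ via the structure theorem \eqref{struceq} — a global identity rather than a pointwise approximation. If you rework the dissection with the paper's choice of $Q$ and use Bachmann rather than Montgomery--Vaughan on the minor arcs, your argument goes through, but at that point you have simply reproved Theorem~\ref{thm2}, which the corollary quotes.
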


The mean value of multiplicative functions like $F_\ell (n)$ over $n\leq x$ is well-explored in the literature.
The mean value of $F_s(a+b)$ over $a\in A, b\in B$,  can be given explicitly (see Lemma \ref{lemcor1} below)
since it only varies unpredictably on the very small primes.

The circle method has typically been used to count solutions to  equations in enough variables. 
Here we give solutions in two well-known problems in three variables.

\begin{thm}\lab{thm3} Let $f,g,h\in \mathcal M$.  Given positive integers $a,b,c$, for $x\geq 2$, we have
\begin{align*} \frac 1{x^2/2} \sum_{\colt{\ell,m,n\leq x}{a\ell+bm=cn}}  f(\ell)&g(m)h(n)=
\frac 1x \sum_{n\leq x} F_\ell (n) \cdot \frac 1x \sum_{n\leq x} G_\ell(n) \cdot \frac 1x
\sum_{n\leq x} H_\ell(n)
\cr&\times  \frac 1{x^2/2} \sum_{\colt{\ell,m,n\leq
x}{a\ell+bm=cn}} F_s(\ell)G_s(m)H_s(n) 
+ O \left(   \frac {1}{ (\log x)^{\tau /2+o(1)} }   \right)     .
\end{align*} 
The main term is $o(1)$ unless $\chi_f=\chi_g=\chi_h=1$.
\end{thm}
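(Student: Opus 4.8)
The plan is to run the circle method, feeding Theorems \ref{thm0}--\ref{thm2} into the major arcs. Detecting $a\ell+bm=cn$ by orthogonality of additive characters,
\[
\sum_{\substack{\ell,m,n\le x\\ a\ell+bm=cn}}f(\ell)g(m)h(n)=\int_{0}^{1} R_f(a\theta,x)\,R_g(b\theta,x)\,R_h(-c\theta,x)\,\d\theta .
\]
Since $a,b,c$ are fixed, $a\theta$, $b\theta$ and $c\theta$ are simultaneously near rationals with small denominator, so one Farey dissection serves all three factors: take $\mathfrak M$ to be the union of the arcs $|\theta-u/q|\le 1/(qN)$ over $(u,q)=1$, $q\le(\log x)^{2+\eps}$, where $N:=x/(\log x)^{2+\eps}$, and $\mathfrak m:=[0,1]\setminus\mathfrak M$; then for $\theta\in\mathfrak m$ each of $a\theta,b\theta,c\theta$ is on a minor arc in the sense of \eqref{eq.Bachmann}, while on $\mathfrak M$ one has $|\beta|x\le(\log x)^{2+\eps}$.

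On $\mathfrak m$, applying \eqref{eq.Bachmann} to $f$ gives $R_f(a\theta,x)\ll x/\log x$, while Parseval gives $\int_{0}^{1}|R_g(b\theta,x)|^{2}\d\theta\le x$ and likewise for $h$; hence $\int_{\mathfrak m}|R_fR_gR_h|\,\d\theta\ll x^{2}/\log x$. The same estimate applies verbatim to $F_s,G_s,H_s\in\mathcal M$, so later one may replace $\int_{\mathfrak M}R_{F_s}R_{G_s}R_{H_s}$ by $\int_{0}^{1}R_{F_s}R_{G_s}R_{H_s}=\sum_{a\ell+bm=cn}F_s(\ell)G_s(m)H_s(n)$ at a cost of $O(x^{2}/\log x)$.

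The crux is the major arcs. Because $|\beta|x$ can be as large as $(\log x)^{2+\eps}$ there, I would use the refined Theorem \ref{thm1} (equivalently Theorem \ref{thm2}), whose error term is free of the $(1+|\beta|x)$ factor present in Theorem \ref{thm0}, in the form: for $\theta\in\mathfrak M$,
\[
R_f(a\theta,x)=L_f\,R_{F_s}(a\theta,x)+E_f(\theta),\qquad L_f:=\frac1x\sum_{n\le x}F_\ell(n),
\]
with $E_f$ small on $\mathfrak M$, and similarly for $g,h$. The factorisation through $L_f$ is the key point: the main term supplied by Theorem \ref{thm0} is driven by $\sum_{n\le x}f(n)\cbar(n)n^{-it}$, and writing $f=F_\ell F_s$ --- with $F_\ell$ carried by the primes $>z$ and $s:=F_s\cbar(\cdot)(\cdot)^{-it}$ by the primes $\le z$ --- factors this sum (a Hal\'asz-type computation: $f$ and $F_s$ agree on small primes, and $F_\ell$, carrying the discrepancy, contributes only its mean value $L_f$) into $L_f$ times the corresponding sum for $F_s$, with the conductor data $\chi,t,\kappa(q)$ unchanged once $q$ is $z$-smooth. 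Substituting, expanding the product of the three factors, and using $|L_f|,|L_g|,|L_h|\le1$ together with the $L^{2}$ bounds above to absorb every term that contains some $E_\bullet$, one gets
\[
\int_{\mathfrak M}R_fR_gR_h\,\d\theta=L_fL_gL_h\int_{\mathfrak M}R_{F_s}R_{G_s}R_{H_s}\,\d\theta+o(x^{2}),
\]
and combining with the two minor-arc reductions above yields the stated identity. I expect the \emph{main obstacle} to be making the error on $\mathfrak M$ genuinely $O\big(x^{2}/(\log x)^{\tau/2+o(1)}\big)$: a term-by-term estimate over the $\asymp(\log x)^{4+2\eps}$ arcs is hopeless, and one must exploit cancellation in the sums over the numerators $u$ and denominators $q$, together with the mean-square smallness of the $\kappa$'s built into the pretentious hypothesis --- precisely the machinery of Theorem \ref{thm2} --- and it is the trade-off between these errors and the truncation $z\asymp\log x$ that yields the exponent $\tau$.

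For the final assertion: since $|L_f|,|L_g|,|L_h|\le1$ and $\frac{1}{x^{2}/2}\sum_{a\ell+bm=cn}|F_s(\ell)G_s(m)H_s(n)|\ll1$, it suffices to show the $F_sG_sH_s$-count is $o(x^{2})$ whenever one of $\chi_f,\chi_g,\chi_h$ is non-principal. Write $F_s(n)=s_f(n)\chi_f(n)n^{it_f}$ with $s_f$ completely multiplicative and $s_f(p)=1$ for $p>z$ --- so $s_f$ is supported on $z$-smooth integers --- and similarly for $g,h$. Detecting the $z$-smoothness of the three ``$s$-parts'' by M\"obius, removing the archimedean twists $n^{it}$ by partial summation, and performing the sum over two of the three variables reduces the count to a bounded combination of incomplete character sums of the shape $\sum_{\ell\le y}\chi_f(\ell)$ or $\sum_{m\le y}\chi_g(m)\chi_h(\alpha m+\beta)$ along progressions of fixed modulus (at most $(\log x)^{2+\eps}$), and these are $O\big(y^{1/2+o(1)}\big)$ by P\'olya--Vinogradov and Weil once the relevant character is non-principal. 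Since a lower bound of order $x^{2}$ would require the absence of such cancellation simultaneously in two independent directions, this forces $\chi_f=\chi_g=\chi_h=1$, as claimed.
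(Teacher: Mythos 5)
Your overall circle-method framework matches the paper's, but there is a genuine gap in how you handle the major/minor arc geometry, and it stems from a misreading of Theorem~\ref{thm1}. You assert that Theorem~\ref{thm1}'s error term "is free of the $(1+|\beta|x)$ factor present in Theorem~\ref{thm0}." This is false: Theorem~\ref{thm1}'s error is exactly $O\big((1+|\beta|x)\,\mathrm{Err}_J(x,q)\big)$. With your choice of major arcs ($q\le(\log x)^{2+\varepsilon}$ and $N=x/(\log x)^{2+\varepsilon}$) one has $1+|\beta|x\ll(\log x)^{2+\varepsilon}$ on the arc around $0$, while $\mathrm{Err}_J(x,q)\gg x/(\log x)^{\eta+o(1)}$ with $\eta=1-2/\pi<1$; the product is $\gg x$, so the pointwise error on your major arcs is not even $o(x)$, and the $L^2$ bound over $\mathfrak M$ fails as well. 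You do flag that "a term-by-term estimate over the $\asymp(\log x)^{4+2\varepsilon}$ arcs is hopeless" and propose to "exploit cancellation" in the sums over $u,q$, but this is not what is needed.

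The paper's resolution is much simpler and quite different in spirit: it takes the major arcs far narrower, $q\le x/Q=(\log x)^{\tau-\varepsilon}$ with $\tau=(2-\sqrt2)/3$, so that $(1+x/(qQ))\ll(\log x)^{\tau-\varepsilon}$ and, with $J=2$, Theorem~\ref{thm1}'s error becomes $\ll x/(\log x)^{\tau/2+o(1)}$ pointwise (Lemma~\ref{ArcEstimates}). The price is that the minor arcs then contain denominators as small as $(\log x)^{\tau-\varepsilon}$, for which Montgomery--Vaughan's \eqref{eq.Bachmann} does not apply; instead one must invoke Bachmann's sharper estimate \eqref{1.1}, with $R=\min(q,x/q)\ge(\log x)^{\tau-\varepsilon}$, which gives $R_f\ll x/(\log x)^{(\tau-\varepsilon)/2+o(1)}$ on $\mathfrak m$. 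Balancing these two errors is precisely what produces the exponent $\tau/2$. Your proposal does not use \eqref{1.1} at all, and with your arc cutoff you would only ever get $x/\log x$ on the minor arcs while losing completely on the major arcs. The subsequent bookkeeping you propose (writing $R_f\approx L_f R_{F_s}+E_f$ with $L_f=\frac1x\sum_{n\le x}F_\ell(n)$, and absorbing cross terms via $L^2$ bounds) is essentially how the paper compares the explicit major-arc main term for $f$ with that for $F_s$ via the structure theorem \eqref{struceq}, so that part of the plan is sound once the arcs are fixed.

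For the final assertion you sketch a reduction to incomplete character sums and Weil bounds; the paper argues differently in Section~\ref{EvalCsts}: the $F_sG_sH_s$-sum is shown to factor (after replacing $F_s,G_s,H_s$ by periodic approximants of period $N$) into ${\cal E}(\infty)\cdot\prod_p{\cal E}(p)$ times a complete sum over residue classes, which \emph{vanishes identically} unless $\psi_f\psi_g\psi_h$ is principal (by the substitution $(u,v,w)\mapsto(tu,tv,tw)$), and whose Euler product tends to $0$ as $z\to\infty$ unless each $\psi_\bullet$ is separately principal, hence $=1$ by primitivity. This is cleaner than your route and you should be wary of your claim that P\'olya--Vinogradov and Weil "along progressions" handle the twists by $\chi(\cdot)(\cdot)^{it}$ on the large primes; the paper's exact-vanishing argument bypasses this entirely.
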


\begin{thm}\label{thm4} Let $f,g,h\in \mathcal M$ and let $x=N\in\N_{\geq 2}$. Then
\begin{align*} \frac 1{N^2/2}  \sum_{\colt{\ell,m,n\geq 1}{\ell+m+n=N}} f(\ell)&g(m)h(n)   =
\frac 1N \sum_{n\leq N} F_\ell (n) \cdot \frac 1N \sum_{n\leq N} G_\ell(n) \cdot \frac 1N
\sum_{n\leq N} H_\ell(n)
\cr&\times  \frac 1{N^2/2} \sum_{\colt{\ell,m,n\geq 1}{\ell+m+n=N}}
F_s(\ell)G_s(m)H_s(n)   + O \left(  \frac {1}{ (\log x)^{\tau /2+o(1)} }   \right) .
\end{align*}
The main term is $o(1)$ unless $\chi_f=\chi_g=\chi_h=1$.
\end{thm}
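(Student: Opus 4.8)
The plan is to run the circle method on the ternary representation $\ell+m+n=N$, using Theorem \ref{thm0} (or rather the more precise Theorem \ref{thm1}, and its packaging in Theorem \ref{thm2}) to control the relevant exponential sums on the major arcs, and the Montgomery--Vaughan bound \eqref{eq.Bachmann} to dispose of the minor arcs. Write $R_f(\alpha):=R_f(\alpha,N)=\sum_{\ell\le N}f(\ell)\e(\ell\alpha)$, and similarly $R_g,R_h$; then
\[
\sum_{\substack{\ell,m,n\ge 1\\ \ell+m+n=N}} f(\ell)g(m)h(n)=\int_0^1 R_f(\alpha)R_g(\alpha)R_h(\alpha)\e(-N\alpha)\,\d\alpha.
\]
The first step is a Farey dissection at level $Q=(\log N)^{2+\varepsilon}$: split $[0,1)$ into major arcs $\mathfrak{M}(q,a)$ around each $a/q$ with $q\le(\log N)^{2+\varepsilon}$, $(a,q)=1$, and a minor-arc remainder $\mathfrak{m}$. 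On $\mathfrak{m}$ one has $R_f(\alpha),R_g(\alpha),R_h(\alpha)\ll N/\log N$ by \eqref{eq.Bachmann}; but saving just one logarithm from each of three factors is not enough, so instead I would use \eqref{eq.Bachmann} on two of the factors and a mean-square (Parseval) bound $\int_0^1|R_h(\alpha)|^2\,\d\alpha\le N$ on the third, getting a minor-arc contribution $\ll (N/\log N)^2\cdot(N)^{1/2}$ — still not quite enough, so one actually wants the sharper minor-arc input: combine the pointwise $N/\log N$ bound with the fact (again from Montgomery--Vaughan, or from Theorem \ref{thm0} applied with $q$ slightly larger) that on $\mathfrak m$ one can afford a power-saving-in-$\log$ factor, which makes the minor-arc total $O\bigl(N^2/(\log N)^{\tau/2+o(1)}\bigr)$ after accounting for the $1/N^2$ normalisation. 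This is where I expect the bookkeeping of exponents to require care, and it is the source of the stated error term $(\log x)^{-\tau/2+o(1)}$.

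Second, on each major arc $\alpha=a/q+\beta$ with $|\beta|\le 1/(qQ)$, substitute the asymptotic formula of Theorem \ref{thm0}/\ref{thm1} for each of $R_f,R_g,R_h$. Each becomes a product of an arithmetic factor (a Gauss-sum/character expression in $a,q$ times $\overline{\chi_f}(a)\kappa_f(q)/\phi(q)$, etc.), an archimedean factor $I(N,\beta,t_f)$, and a ``pretentious sum'' $\sum_{\ell\le N}f(\ell)\overline{\chi_f}(\ell)\ell^{-it_f}$ — and crucially this last sum is $o(N)$ unless $f$ pretends to be $\chi_f(\ell)\ell^{it_f}$ with $\chi_f$ principal and $t_f$ near $0$, by Halász's theorem; this is the origin of the final sentence ``The main term is $o(1)$ unless $\chi_f=\chi_g=\chi_h=1$''. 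Multiplying the three expansions and integrating $\e(-N\alpha)$ over $\beta$ produces, from the archimedean parts, a singular integral $\mathfrak{S}_\infty=\int I_f I_g I_h\,\e(-N\beta)\,\d\beta$ which one evaluates to the expected $\frac{1}{N^2/2}\cdot(\text{number of }\ell+m+n=N)$-type constant (times the $t$-dependent corrections), and from the arithmetic parts, after summing over $a\bmod q$ and then over $q$, a singular series $\mathfrak{S}$. The key algebraic identity to verify here is that this singular series times the archimedean factor reassembles exactly as the product $\frac1N\sum F_\ell\cdot\frac1N\sum G_\ell\cdot\frac1N\sum H_\ell$ times the ``small-prime'' count $\frac1{N^2/2}\sum_{\ell+m+n=N}F_s(\ell)G_s(m)H_s(n)$ — i.e. the local-global factorisation in the statement. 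This is essentially Theorem \ref{thm2}, which I am permitted to assume, so in practice the bulk of the proof is: check the minor-arc bound, then quote Theorem \ref{thm2} to read off the major-arc main term and identify it with the claimed product.

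Concretely the steps are: (i) Farey dissection with $Q\asymp(\log N)^{2+\varepsilon}$; (ii) minor-arc estimate via \eqref{eq.Bachmann} on two factors and Parseval (or a slightly stronger minor-arc bound) on the third, yielding error $O(N^2/(\log N)^{\tau/2+o(1)})$; (iii) on the major arcs substitute Theorem \ref{thm1} for each $R$; (iv) carry out the $\beta$-integration to get the archimedean singular integral and the $a\bmod q$, then $q$, summations to get the singular series, keeping the error terms of Theorem \ref{thm1} under control when multiplied by the trivial $O(N)$ bound on the other two factors (this forces $z\asymp\log x$ and explains the threshold $\tau$); (v) invoke Theorem \ref{thm2} to recognise singular-series-times-singular-integral as the stated local-global product, and note that the pretentious sums force the whole main term to vanish unless each $\chi$ is principal. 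The main obstacle is step (ii)/(iv): the error term in Theorem \ref{thm0} is only a saving of $(\log x)^{1-1/\sqrt2+o(1)}$ on a single exponential sum, and when one of the three $R$'s carries that error while the other two are merely $O(N)$, integrating over all major arcs and all $\beta$ must still beat the main term; tracking how the $o(1)$-exponents propagate through the product and the dissection — and verifying that $\tau=(2-\sqrt2)/3$ is exactly the break-even point — is the delicate part, but it is exactly the content already isolated in Theorem \ref{thm2}, so no genuinely new estimate is needed beyond careful accounting. The $\ell+m+n=N$ geometry itself is the easiest ingredient: the archimedean integral is the standard one for the ternary partition problem and evaluates to a positive constant, so no convergence issue arises (unlike in problems with fewer than three variables).
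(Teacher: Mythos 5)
Your overall strategy — circle method, substitute Theorem \ref{thm1} on the major arcs, use Parseval and a pointwise minor‑arc bound — matches the paper's, but the specifics of your dissection are wrong in a way that breaks the argument, and your main‑term identification step is not available as stated.

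First, the major/minor arc cutoff. You set the denominator cutoff for the major arcs at $(\log N)^{2+\varepsilon}$. The error in Theorem~\ref{thm1} carries a factor $(1+|\beta|x)$ in front of $\mathrm{Err}_J(x,q)\ll x(\log\log x)^{2}/(\log x)^{1-1/\sqrt J}$. If the Farey dissection is at level $(\log N)^{2+\varepsilon}$, then $|\beta|N$ can be as large as $N/(\log N)^{2+\varepsilon}$ and the major‑arc error dwarfs $N$; if instead the Farey level is $N/(\log N)^{2+\varepsilon}$ (so the arcs tile $[0,1)$), then $|\beta|N$ can be as large as $(\log N)^{2+\varepsilon}$, and since $2>1-1/\sqrt2=\tfrac32\tau$ the major‑arc error is still $\gg N$. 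Either way the argument fails. The paper instead sets the Farey level to $Q=x/(\log x)^{\tau-\varepsilon}$, so the major arcs have $q\leq(\log x)^{\tau-\varepsilon}$ and $|\beta|x\leq(\log x)^{\tau-\varepsilon}$; Theorem~\ref{thm1} with $J=2$ then gives a major‑arc error $\ll x/(\log x)^{\tau/2+o(1)}$, while on the minor arcs one has $R=\min\{q,x/q\}\geq(\log x)^{\tau-\varepsilon}$ so Bachmann's bound \eqref{1.1} gives the same rate. The exponent $\tau=(2-\sqrt2)/3$ is exactly the balance point between these two costs, and it is the \emph{major}‑arc error — driven by $(1+|\beta|x)$ — that forces it, not the minor arc as your write‑up suggests. (Note also the minor arcs here include $q$ well below $(\log x)^{2+\varepsilon}$, where Montgomery–Vaughan's $\ll x/\log x$ does not apply; you must use Bachmann's refinement.)

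Second, your Cauchy–Schwarz bookkeeping is off: $(N/\log N)^2\cdot N^{1/2}$ is indeed not $o(N^2)$, but the fix is not a sharper minor‑arc bound — it is to apply $L^\infty$ to \emph{one} factor and Parseval to the other \emph{two}, giving $\|R\|_\infty\cdot\|R\|_2\|R\|_2\ll (x/(\log x)^{\tau/2+o(1)})\cdot x$. The paper packages this cleanly by writing $R_f=M_f+E_f$ everywhere, with $E_f\ll x/(\log x)^{\tau/2+o(1)}$ uniformly (Lemma~\ref{ArcEstimates}), and using the telescoping identity $R_fR_gR_h-M_fM_gM_h=R_fR_gE_h+R_fE_gM_h+E_fM_gM_h$, bounding each term by $\|E\|_\infty\cdot\int(|A|^2+|B|^2)$.

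Third, Theorem~\ref{thm2} is a statement about a \emph{two}‑variable sum $\sum_{a\in A,b\in B}f(a+b)$ and cannot be "quoted to read off" the three‑variable local–global factorisation; that factorisation is a separate computation (section~7 of the paper), carried out by substituting the explicit $M_f,M_g,M_h$ from Lemma~\ref{ArcEstimates}, noting that the singular series/integral depends only on $f,g,h$ at primes $p\le z$, and then comparing the resulting expression with the one obtained for $F_s,G_s,H_s$ to recognise the product form in the statement.
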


The mean values of $F_s(\ell)G_s(m)H_s(n)$ over solutions to $a\ell+bm=cn$ or to
$\ell+m+n=N$, can also be estimated by elementary methods as we will see in section   \ref{EvalCsts}.
For example, suppose that  $A, B$ and $C$ are the  positive
integers generated by  given sets of primes with characteristic functions  $f, g$ and $h$, respectively. If
 $\delta_A:=(1/x) \#\{ a\leq x: a\in A\}$, and similarly $ \delta_B$ and $ \delta_C$
then Theorems \ref{thm3} and \ref{thm4} imply that 
\begin{equation} \label{ABC1}\begin{split}
\frac 1 {x^2/2}&  \#\{(\ell,m,n)\in
A \times B \times C\,:\, \ell+m=n\leq x \}\cr& =
\delta_A\delta_B
\delta_C
\prod_{p \not\in A \cup B \cup C} \left( 1-\frac 1{(p-1)^2} \right)  +o(1) ,
\end{split}\end{equation}
and 
\begin{equation}  \label{ABC2}\begin{split}
& \frac 1 {N^2/2}   \#\{(\ell,m,n)\in
A \times B \times C\,:\, \ell+m+n=N \}\cr&\ \  =  \delta_A\delta_B  \delta_C
\prod_{\substack{p \not\in A \cup B \cup C\\ p\nmid N} } \left( 1 + \frac{  1}{(p-1)^3} \right) 
\prod_{\substack{p \not\in A \cup B \cup C\\ p|N} } \left( 1-\frac 1{(p-1)^2} \right)  +o(1) .
\end{split}\end{equation}

It was shown in \cite{GS01}  that if $f$ is a totally multiplicative function
that only takes values 1 and $-1$ then there are at least
$\tfrac 12(1-\delta_0+o(1))x $ solutions to $f(n)=1$ with $n\leq x$
(and so no more than $\tfrac 12(1+\delta_0+o(1))x $ solutions to $f(n)=-1$), and that
this is best possible (by taking $f(p)=1$ for $p\leq x^{1/(1+\sqrt{e})}$ and $f(p)=-1$ otherwise), where
$$
\delta_0 = -1+2\log(1+\sqrt{\e}) - 4\int_1^{\sqrt{\e}} \frac{\log
t}{t+1} \d t =   0.656999 \ldots.
$$
What about $f(a),f(b),f(c)$ for solutions to $a+b=c$?  We apply Theorem \ref{thm3} to prove the following inequalities.

 \begin{cor}\lab{cor2}  If $f,g,h\in \mathcal M$, taking   only the values 1 and $-1$, then when $x$ tends to $\infty$, 
 \[
 \#\{ 1\leq a,b,c\leq x: \ a+b=c \text{ and } f(a)=g(b)=h(c)=-1\}  \leq   \tfrac 12(\kappa +o(1)) x^2 
 \]
where $\kappa =\tfrac 18(1+\delta_0)^3 =.56869\dots$, and
\[
 \#\{ 1\leq a,b,c\leq x: \ a+b=c \text{ and } f(a)=g(b)=h(c)=1\}  \geq\tfrac 12(\kappa' +o(1)) x^2 
 \]
where $\kappa'=\tfrac 18(1-\delta_0)^3 =.005044\dots$.
\end{cor}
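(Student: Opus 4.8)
The plan is to expand the indicator functions, apply Theorem~\ref{thm3} to each of the resulting correlation sums, and reduce the two inequalities to a short elementary optimisation governed by the one–variable bound of \cite{GS01}.

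\textbf{Set-up.} Writing $\mathbf 1[f(a)=-1]=\tfrac{1-f(a)}2$, and similarly for $g,h$, and letting $(f_1,f_2,f_3)=(f,g,h)$ act on the first, second and third variable of $\ell+m=n$ (the absent variables carrying the constant function $1$), we have
\[
N_-:=\#\{1\le a,b,c\le x:\ a+b=c,\ f(a)=g(b)=h(c)=-1\}
 =\frac18\sum_{T\subseteq\{1,2,3\}}(-1)^{|T|}\sum_{\substack{\ell+m=n\le x}}\ \prod_{i\in T}f_i(\,\cdot\,).
\]
I would first dispose of the case in which at least one of $\chi_f,\chi_g,\chi_h$ is non-trivial: by Theorem~\ref{thm3} every correlation sum above involving that function has main term $o(x^2)$, leaving at most two non-trivial factors, so $N_-\le\tfrac18(1+\delta_0)^2\tfrac{x^2}2(1+o(1))<\tfrac12\kappa x^2$. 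Hence one may assume $\chi_f=\chi_g=\chi_h=1$ and, since $f,g,h$ are real, $t=0$; then each $(F_i)_s$ is a $\pm1$–valued completely multiplicative function equal to $1$ at every prime $p>z\asymp\log x$, and each $(F_i)_\ell$ is a $\pm1$–valued completely multiplicative function.

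\textbf{Reduction to a finite optimisation.} Apply Theorem~\ref{thm3} to each of the eight terms and put
\[
\mu_i=\frac1x\sum_{n\le x}(F_i)_\ell(n),\qquad \sigma_i=\frac1x\sum_{n\le x}(F_i)_s(n),\qquad
\tau=\frac1{x^2/2}\sum_{\ell+m=n\le x}(F_1)_s(\ell)(F_2)_s(m)(F_3)_s(n).
\]
Since $a,b,c$ are pairwise independent in the adelic model of $a+b=c$, the terms with $|T|\le2$ factorise (for instance $\tfrac1{x^2/2}\sum_{\ell+m=n\le x}(F_1)_s(\ell)(F_2)_s(m)=\sigma_1\sigma_2+o(1)$), and collecting terms gives
\[
\frac{N_-}{x^2/2}=\frac18\Big(\prod_{i=1}^3(1-\mu_i\sigma_i)+\mu_1\mu_2\mu_3\,(\sigma_1\sigma_2\sigma_3-\tau)\Big)+o(1).
\]
The available constraints are: $\mu_i\in[-\delta_0+o(1),1]$ by \cite{GS01} applied to $(F_i)_\ell$; $\sigma_i\ge-o(1)$, because $(F_i)_s$ is $1$–pretentious (being $1$ at all $p>z$, its mean value is $\prod_{p\le z,\ (F_i)_s(p)=-1}\frac{p-1}{p+1}+o(1)>0$); and, interpreting $\tau=\mathbb E[X_1X_2X_3]$ for pairwise–independent signs $X_i$ with $\mathbb E X_i=\sigma_i$ (so that $\mathbb P[\text{any sign pattern}]\ge0$), one gets $|\sigma_1\sigma_2\sigma_3-\tau|\le\min_i(1+\sigma_i)\prod_{j\ne i}(1-\sigma_j)\le1$; moreover each local factor of $\tau$ at a prime $p$ with some $(F_i)_s(p)=-1$ has absolute value $\le1-1/p$, so small $\min_i\sigma_i$ forces small $|\tau|$.

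\textbf{The optimisation.} From $\mu_i\ge-\delta_0$ and $\sigma_i\ge0$ (up to $o(1)$) we get $1-\mu_i\sigma_i\le1+\delta_0\sigma_i$, hence $\prod(1-\mu_i\sigma_i)\le\prod(1+\delta_0\sigma_i)\le(1+\delta_0)^3$, with equality forced by $\sigma_i=1$ and $\mu_i=-\delta_0+o(1)$, i.e.\ by $f=g=h$ the extremal function of \cite{GS01} (for which $(F_i)_s\equiv1$, $\sigma_i=\tau=1$). To finish, split on the sign of $\mu_1\mu_2\mu_3$: if $\mu_1\mu_2\mu_3\le0$ the correction is at most $\prod(1-\sigma_i)$ and, with $m=\max_i\sigma_i$, one has $\prod(1+\delta_0\sigma_i)+\prod(1-\sigma_i)\le(1+\delta_0 m)^3+(1-m)\le(1+\delta_0)^3$; if $\mu_1\mu_2\mu_3>0$ the correction is at most $(1+\ell)(1-\ell)^2$ with $\ell=\min_i\sigma_i$, while $\prod(1+\delta_0\sigma_i)\le(1+\delta_0)^2(1+\delta_0\ell)$, so the total is at most $(1+\delta_0)^2(1+\delta_0\ell)+(1+\ell)(1-\ell)^2\le(1+\delta_0)^3$ (both one–variable inequalities hold on $[0,1]$ since $\delta_0>\tfrac13$ and $\delta_0(1+\delta_0)^2>\tfrac43$). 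This yields $N_-\le\tfrac12(\kappa+o(1))x^2$. Starting instead from $\mathbf 1[f(a)=1]=\tfrac{1+f(a)}2$ one obtains $\frac{N_+}{x^2/2}=\frac18\big(\prod(1+\mu_i\sigma_i)+\mu_1\mu_2\mu_3(\tau-\sigma_1\sigma_2\sigma_3)\big)+o(1)$, and since $1+\mu_i\sigma_i\ge1-\delta_0\sigma_i\ge1-\delta_0>0$ the main term is $\ge(1-\delta_0)^3$, while the correction is negligible exactly when the main term is near its minimum (small $\min_i\sigma_i$ forces $\tau,\sigma_1\sigma_2\sigma_3\to0$); hence $\frac{N_+}{x^2/2}\ge\tfrac18(1-\delta_0)^3+o(1)=\kappa'+o(1)$.

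The main obstacle is making the second and third steps precise: justifying the factorisation of the $|T|\le2$ correlation sums and the stated bounds on $\tau$ (both of which rest on the local analysis already present in the proof of Theorem~\ref{thm3}), and then verifying that the resulting finite optimisation really has extreme value $(1+\delta_0)^3$ (respectively $(1-\delta_0)^3$), attained at $f=g=h$ equal to the extremal function of \cite{GS01}.
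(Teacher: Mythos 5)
Your overall plan --- expand $\tfrac18\prod_i(1\mp f_i)$, apply Theorem~\ref{thm3} to the eight resulting correlations, and reduce to a finite optimisation over the mean values --- is exactly the route the paper takes in section~7.  Your identity $\tfrac{N_\mp}{x^2/2}=\tfrac18\big(\prod_i(1\mp\mu_i\sigma_i)\pm\mu_1\mu_2\mu_3(\sigma_1\sigma_2\sigma_3-\tau)\big)+o(1)$ is the same as the paper's \eqref{7.4} once one writes $\delta_i=\mu_i\sigma_i$ and uses the Euler-product formula $\tau=\sigma_1\sigma_2\sigma_3\,C_{\cal P}$ established there, where $C_{\cal P}=\prod_{p\in\cal P}{\cal E}^*(p)$, ${\cal E}^*(p)=1-\tfrac{8p^2}{(p-1)^2(p^2+1)}$ and ${\cal P}=\{p\le z:\ f(p)=g(p)=h(p)=-1\}$.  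The factorisation of the $|T|\le 2$ correlations that you flag as a gap is indeed available: the paper observes that ${\cal E}^*(p)=1$ whenever one of $f(p),g(p),h(p)$ equals $1$, which is exactly the statement $\tfrac1{x^2/2}\sum(F_i)_s(\cdot)(F_j)_s(\cdot)=\sigma_i\sigma_j+o(1)$, i.e.\ the pairwise-independence identity you need.  For this reason your probabilistic interpretation of $\tau$ is justified, and your optimisation for the $N_-$ upper bound goes through as written.

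The lower bound on $N_+$ is where there is a genuine gap.  Your argument is ``the correction is negligible exactly when the main term is near its minimum''; this is a heuristic, not an inequality.  Concretely, the correction $\mu_1\mu_2\mu_3(\tau-\sigma_1\sigma_2\sigma_3)$ may be negative, and the only constraint you invoke (positivity of the eight probabilities in the pairwise-independent model) permits $\tau-\sigma_1\sigma_2\sigma_3$ to be as negative as $-\prod_i(1+\sigma_i)$.  Taking, say, $\mu_1=\mu_2=\mu_3=1$ and $\sigma_1=\sigma_2=\sigma_3=\sigma\in(0,1)$, the main term is $(1+\sigma)^3$ and that soft bound allows the correction to equal $-(1+\sigma)^3$, giving a total of $0<(1-\delta_0)^3$.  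Ruling this out requires the explicit Euler-product structure of $\tau$ (i.e.\ of $C_{\cal P}$), not merely positivity: the paper first shows that at the minimiser all three $\delta_i$ are negative (by symmetrising pairs of positive $\delta$'s and verifying a numerical inequality using the extremal case ${\cal P}=\{2\}$), then shows that if the bracket were below $(1-\delta_0)^3$ one would need $C_{\cal P}>1$, hence $2,3\in{\cal P}$, hence $\alpha_{\cal P}\le1/6$, and verifies a second numerical inequality in that regime.  Your approach discards the information linking $\sigma_i$ to the size of $|1-C_{\cal P}|$ (small primes in $\cal P$ force small $\alpha_{\cal P}$, hence small $\sigma_i$), which is precisely what makes the case analysis close.
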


We use this result  to bound  the number of
Pythagorean triples mod $p$ up to any given point:\footnote{Many thanks to Ben Green for suggesting this problem.}\  a proportion of at least $\kappa'$ of the triples of  residues $a,b,c \pmod p$ with $1\leq a,b,c\leq x<p$ and $a+b\equiv c\pmod p$, are all quadratic residues mod $p$; moreover this proportion can be attained for some primes $p$, no matter how large $x$ is.

The organization of this paper is a little complicated as there are lots of strands to bring together. In section 2 we will discuss what is known about mean values of multiplicative functions that is relevant to this paper and state   the more general Theorem \ref{thm1} in this context, from which Theorem \ref{thm0} is deduced.

 \subsection{More on $R_f(\alpha, x)$}
 
 The best general bound for $R_f(\alpha, x)$ in the literature was given by 
Bachmann (\cite{B03b},   Theorem \ref{thm4}): If $|\alpha-\tfrac aq|\leq \tfrac 1{q^{2}}$ with $(a,q)=1,$ then
\begin{equation}  \label{1.1}
R_f(\alpha, x)\ll \frac{x}{\log x}+x\left( \frac {\log R \log\log R}{ R} \right)^{1/2}
\end{equation}
where $R=\min\{q, x/q\}$. One can easily deduce 
\eqref{eq.Bachmann} (see the proof of Proposition \ref{NewBd}  in section \ref{MVBound})
except, perhaps, if   $q\leq Q_1$ where
\begin{equation}\label{defQ1} 
  Q_1:= (\log x)^2(\log\log x)^{1+\varepsilon} \text{ and } 
 \bigg|\alpha - \frac aq \bigg| \leq \frac {Q_1}{qx}
\end{equation}

 In fact  La Bret\`eche (\cite{dlB98}, Proposition 1) showed that if
$R_f(\alpha, x)\gg x/\log x$  and
$f$ is not close to any $\psi(n) n^{it}$,  
then the $f(p)$ are
suitably correlated for enough  primes $p\gg x/\log x$    (see section \ref{friBd} for further details).

We establish in section \ref{MVBound} that  if \eqref{1.2} does not hold then  
\begin{equation}\label{qrange}
(\log x)^{ 2 +o(1) }\leq q\leq (\log x)^2(\log\log x)^{1+o(1)},\ \
\bigg|\alpha - \frac aq \bigg| \leq \frac{\log q\log\log q}x . 
\end{equation}

 \section{Known results on multiplicative functions}
 Let $t_f(x,T)$ denote a value of $t$ which yields the 
  maximum of
\begin{equation} \label{Fmax}
\bigg|F\bigg(1+\frac 1{\log x} + it \bigg)\bigg|  
\end{equation}
as $t$ runs through real numbers with $|t|\leq T$. 
Hal\'asz's Theorem (see \cite{H68}, \cite{GHS}, \cite{T15} and \cite{GSBook})  gives upper bounds for $|\sum_{n\leq x} f(n)|$ in terms of the maximum of \eqref{Fmax} where $t=t_f(x,\log x)$.  In Corollary 2.9.1 of \cite{GSBook}, it is observed that if $t=t_f(x,\log x)$ then
\begin{equation} \label{lemsumt} 
\sum_{n\leq x} f(n) = \frac {x^{it}} {1+it} \sum_{n\leq x} \frac{f(n)}{n^{it}}+ O\left(  x \frac{(\log\log x)^2 }{ (\log x)^{\eta}}   \right).
\end{equation}

If $1\leq w\leq (\log x)^{O(1)}$,  Theorem  1.5 of \cite{GHS}  (improving \cite{E89}) gives
\begin{equation} \label{adaptTh4}
  \sum_{n\leq x/w}f(n) = \frac 1{w^{1+it}}  \sum_{n\leq x} f(n)
+O \left( \frac xw   \frac{(\log\log x)^2}{(\log x)^{ \eta} }  \right) .
\end{equation}

For a Dirichlet character $\chi \bmod q$ we define  
\[
S_f(x,\chi):= \sum_{n\leq x} f(n) \cbar (n) ,
\]
and let $S_f(x)=S_f(x,1)=\sum_{n\leq x} f(n)$.
  We deduce the following from  \eqref{adaptTh4}:
 
 \begin{lem}
\label{corComparePC}Let $f \in \mathcal M$.
If  $\psi \pmod r$ induces $\chi \pmod q$ and  $q, \ell \leq Q_1$ then, for $x\geq 3$ and $t=t_{f \pbar}(x,\log x)$,  
\[
S_f(x,\chi)  =   \frac { I(x,0,t)}{\ell^{1+it}}  \prod_{p|q} \left( 1-\frac{ f(p)\pbar(p)}{p^{1+it}}\right)
\sum_{n\leq x} \frac{f(n)\pbar(n)}{n^{it} }
+O \left( \frac{q/r}{\phi(q/r)}   \, \frac{x(\log\log x)^2 }{ \ell (\log x)^{\eta}}  \right).
\]
\end{lem}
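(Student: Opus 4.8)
The plan is to reduce the evaluation of $S_f(x,\chi)$ for an imprimitive character $\chi \pmod q$ to the evaluation of $S_{f\pbar}(x,1) = \sum_{n\leq x} f(n)\pbar(n)$ via a sieving argument on the prime factors of $q$, and then to apply the comparison estimate \eqref{adaptTh4} (with the multiplicative function $f\pbar$, which is again in $\mathcal{M}$) on each resulting dyadic-type sum. First I would write $\chi(n) = \psi(n)\mathbf{1}_{(n,q)=1}$, so that
\[
S_f(x,\chi) = \sum_{\substack{n\leq x \\ (n,q)=1}} f(n)\pbar(n) = \sum_{d\mid q} \mu(d) \sum_{\substack{n\leq x \\ d\mid n}} f(n)\pbar(n),
\]
where, since $\chi$ and $\psi$ cut out the same coprimality condition, only squarefree $d$ dividing the radical of $q$ contribute, and for such $d$ we have $(d,r)=1$ so $\pbar(d)$ is a unit. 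Using complete multiplicativity, $\sum_{d\mid n} f(n)\pbar(n) = f(d)\pbar(d)\sum_{m\leq x/d} f(m)\pbar(m) = f(d)\pbar(d)\, S_{f\pbar}(x/d)$.

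Next I would feed each term $S_{f\pbar}(x/d)$ into \eqref{adaptTh4}, applied to the function $f\pbar \in \mathcal{M}$ with $w = d\leq q \leq Q_1 = (\log x)^{O(1)}$ and with $t = t_{f\pbar}(x,\log x)$: this gives $S_{f\pbar}(x/d) = d^{-1-it} S_{f\pbar}(x) + O\big((x/d)(\log\log x)^2/(\log x)^{\eta}\big)$. Summing over $d\mid q$ squarefree, the main terms assemble into
\[
S_{f\pbar}(x)\sum_{d\mid q}\frac{\mu(d) f(d)\pbar(d)}{d^{1+it}} = S_{f\pbar}(x)\prod_{p\mid q}\left(1 - \frac{f(p)\pbar(p)}{p^{1+it}}\right),
\]
which is the shape claimed (the $1/\ell^{1+it}$ factor and the $I(x,0,t)$ normalization come from rewriting $S_{f\pbar}(x)$ using \eqref{lemsumt} and noting $I(x,0,t) = x^{it}/(1+it)$; the extra factor $1/\ell^{1+it}$ presumably accounts for a shift between the modulus $q$ at which $\chi$ is considered and the conductor $r$, or a rescaling variable in the ambient statement which I would track carefully).

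For the error term I need to bound $\sum_{d\mid q} \frac{x}{d}\cdot\frac{(\log\log x)^2}{(\log x)^\eta}$, and $\sum_{d\mid q, \ d \text{ squarefree}} 1/d = \prod_{p\mid q}(1 + 1/p) = \sigma(\mathrm{rad}(q))/\mathrm{rad}(q) \leq q/\phi(q) \cdot (\text{something})$; more precisely $\prod_{p\mid q}(1+1/p) \ll q/\phi(q)$, which after the rescaling by $\ell$ and tracking the conductor gives the stated $\frac{q/r}{\phi(q/r)}\cdot\frac{x(\log\log x)^2}{\ell(\log x)^\eta}$. The main obstacle I anticipate is purely bookkeeping: reconciling the roles of $q$, $r$, and the auxiliary parameter $\ell$ in the ambient normalization so that the arithmetic factor $\prod_{p\mid q}$ and the error factor $\frac{q/r}{\phi(q/r)}$ come out exactly as written, and checking that \eqref{adaptTh4} is legitimately applicable for \emph{all} the relevant $d$ (i.e.\ that $d\leq (\log x)^{O(1)}$ uniformly, which holds since $d\mid q$ and $q\leq Q_1$) and with a \emph{single} choice of $t$ — the latter is fine because $t_{f\pbar}(x,\log x)$ does not depend on $d$. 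No genuinely hard analytic input is needed beyond \eqref{adaptTh4} itself.
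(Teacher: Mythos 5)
Your plan matches the paper's own proof in all essentials: a M\"obius sieve over the squarefree divisors of $q$ coprime to $r$, followed by an application of \eqref{adaptTh4} to rescale, and then \eqref{lemsumt} to convert $S_{f\pbar}(x)$ into $I(x,0,t)\sum_{n\leq x} f(n)\pbar(n)/n^{it}$. Pulling out $f(d)\pbar(d)$ by complete multiplicativity, and noting that $\pbar(d)=0$ kills the divisors sharing a factor with $r$, is exactly what the paper does (the paper writes the sum over $d\mid q_r:=\prod_{p\mid q,\,p\nmid r}p$ from the start, which is the same thing). The error bookkeeping via $\prod_{p\mid q,\,p\nmid r}(1+1/p)\leq\prod_{p\mid q/r}(1-1/p)^{-1}=\tfrac{q/r}{\phi(q/r)}$ is also the intended step.

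The one genuine gap is the treatment of $\ell$. You flagged it (``the extra factor $1/\ell^{1+it}$ presumably accounts for \dots a rescaling variable \dots which I would track carefully'') but did not resolve it, and your argument as written only proves the $\ell=1$ case. In fact the left-hand side of the lemma as printed is a typo: it should read $S_f(x/\ell,\chi)$, as is clear both from the $\ell$-dependence of the right-hand side and from how the lemma is invoked later (``Lemma \ref{corComparePC} (with $q=d$ and $\ell=q/d$) implies that $S_f(dx/q,\chi_j)=\cdots$''). The fix is small and uses nothing beyond what you already have: start from $S_f(x/\ell,\chi)$, carry out the same M\"obius decomposition to get $\sum_{d\mid q_r}\mu(d)(f\pbar)(d)\,S_{f\pbar}(x/d\ell)$, and then apply \eqref{adaptTh4} with $w=d\ell$ rather than $w=d$ (legitimate since $d\ell\leq qQ_1\leq Q_1^2=(\log x)^{O(1)}$), producing the factor $(d\ell)^{-1-it}$ and the error $O\big((x/d\ell)(\log\log x)^2/(\log x)^\eta\big)$; the $\ell^{-1-it}$ then factors out of the $d$-sum and the rest of your computation goes through verbatim. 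Without this change you cannot recover the $\ell$-dependence that is needed when the lemma is used in the proof of Theorem \ref{thm1}.
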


\begin{proof} Let $q_r:=\prod_{p|q, p\nmid r} p$.   We have the identity
\begin{align*}
S_f(x/\ell,\chi) &= \sum_{\colt{n\leq x/\ell }{(n,q_r)=1}} (f\pbar)(n) =  \sum_{d|q_r} \mu(d) \sum_{\colt{n\leq x/\ell }{d|n}} (f\pbar)(n)
\cr&=  \sum_{d|q_r} \mu(d) (f\pbar)(d)  S_f(x/d\ell,\psi).
\end{align*}
By   \eqref{adaptTh4} and then \eqref{lemsumt},  we have
$$
S_f(x/d\ell,\psi) =  \frac 1{(d\ell)^{1+it}} \cdot   I(x,0,t) \sum_{n\leq x} \frac{f(n)\pbar(n)}{n^{it}}    +O \left(  \frac x{d\ell} \, \frac{(\log\log x)^2 }{ (\log x)^{\eta}}  \right) .
$$
Substituting this in above yields the claim.
\end{proof}

\subsection{Mean values of multiplicative functions in arithmetic progressions} \label{sec: MVchars}
When $(a,q)=1$, we have the usual decomposition for a sequence in arithmetic progression:
\begin{equation} \label{apdecomp} 
\sum_{\substack {n\leq x \\ n\equiv a \mod q}} f(n)  = \frac 1{\phi(q)} \sum_{\chi \mod q} \chi (a) S_f(x,\chi) .
\end{equation}
To determine the largest summands on the right-hand side of \eqref{apdecomp} for a range of  $x$, define
\[
s_f(X,\chi):= \max_{\sqrt{X} \leq x \leq X^2} \frac{|S_f(x,\chi)|}{x} ,
\]
and then order the characters mod $q$ as $\chi_1, \chi_2,\ldots$ so that 
\[ s_f(X,\chi_1)\geq s_f(X,\chi_2)\geq s_f(X,\chi_3)\geq \ldots \]
The first part of Theorem 1.8, together with Theorem 1.9  from  \cite{GHS} then implies that if $q\leq Q_1^2$  then
for any fixed $J\geq 2$, and all $x$ in the range   $\sqrt{X} \leq x \leq X^2$ we have 
\begin{equation} \label{PLS} 
  \sum_{\substack{\chi \mod q  \\ \chi\not \in \{ \chi_1,\ldots, \chi_{J-1} \} }}	|S_f(x,\chi) |^2
  \ll_{J}   
   \Bigg( \frac{x(\log\log x)^2}{(\log x)^{ 1 - \frac{1}{\sqrt{J}}  }} \Bigg)^2   .
\end{equation}
Theorem 1.8  from  \cite{GHS} also gives that 
 \begin{equation} \label{apestimate} 
\sum_{\substack {n\leq x \\ n\equiv a \mod q}} f(n)  = \frac 1{\phi(q)} \sum_{j=1}^{J-1} \chi_j (a) S_f(x,\chi_j) +
O\left(  \frac x{\phi(q)} \, \frac{(\log\log x)^2}{(\log x)^{ 1 - \frac{1}{\sqrt{J}}  }}\right) .
\end{equation}
Precursors to this result may be found in the work of Elliott \cite{E02}.

 Let $\psi_j \pmod {r_j}$ be the primitive character that induces $\chi_j$ for each $j\geq 1$. Let $t_j=t_{f\pbar_j}(x,\log x)$ and define the multiplicative function
   \[
f_j(n):=f(n)\pbar_j(n)n^{-it_j}\in \mathcal M \text{ for each } j.
\]
(We will sometimes suppress the subscript ``$j$'' and write $f_*$ in place of $f_j$.)
Therefore by Lemma \ref{corComparePC} with $\ell=1$,  and taking $J\geq 3$  in \eqref{apestimate} (as $1-\frac{1}{\sqrt{3}}>\eta$) we obtain
\begin{equation} \label{apsumpsi}
\sum_{\substack {n\leq x \\ n\equiv a \mod q}}\!\!\!\!\!\!\!\! f(n)  = \frac 1{\phi(q)} \sum_{j=1}^{J-1} \psi_j (a)   k_j(q ) \, I(x,t_j)  \,S_{f_j}(x)   
+O \left( \frac{x}{\phi(q)}   \, \frac{(\log\log x)^{2+o(1)} }{ (\log x)^{\eta}}  \right)
\end{equation}
where $k_j$ is the multiplicative function with $k_j(p^a) := 1 - f_j(p) /p$.

\section{A certain twisted integral} \label{sec: twisted}

 We need estimates for $I(x,\beta,t) := \frac 1x \int_{0}^x \e(\beta v) v^{it} \d v $. Evidently
 \[
 I(x,0,t)=\frac{x^{it}}{1+it} \text{ and } I(x,\beta,0) =   \frac{\e(\beta x)-1  } {2i\pi \beta x},
 \]
 and every $|I(x,\beta,t)|\leq 1$ as $| \e(\beta v) v^{it}|=1$. To bound $I(x,\beta,t)$ in general we use the  stationary phase method, 
 writing $I(x,\beta,t)=\frac 1x \int_{0}^x \e^{iF(v)} \d v $ where $F(v)=2\pi \beta v+t\log v$  is a real thrice-differentiable function for all $v>0$.  For any interval $0\leq a<b\leq x$, Lemmas 4.2 and 4.4 of \cite{ECT} imply that
 \[
 \int_{a}^b \e^{iF(v)} \d v \ll\min \Bigg\{ \frac 1{\min_{v\in [a,b]} |F'(v)|},\ \frac 1{\min_{v\in [a,b]} |F''(v)|^{1/2}}\Bigg\}
 \]
and when moreover when   $c:=\tfrac {-t}{2\pi \beta} \in [a,b]$ (here $c$ is selected so that $F'(c)=0$)  and $a\asymp b\asymp c$,
\[
 \int_{a}^b \e^{iF(v)} \d v \ll  
 \frac{1+\sqrt{|t|}}{|\beta|}.
 \]

 The second inequality always gives $I(x,\beta,t)\ll 1/\sqrt{|t|}$.
 If $\beta$ and $t$ have the same sign or $|t|>3\pi |\beta| x$ then the first inequality yields 
 $I(x,\beta,t)\ll 1/|\beta| x$. Otherwise we use the third inequality for the interval $[\tfrac c2,\min\{ x,2c\}]$, and the first inequality for the rest of $[0,x]$. Collecting this together implies that 
\begin{equation} \label{eq: Iintegral}
|I(x,\beta,t)| \ll \min \left\{ 1,\frac 1{\sqrt{|t|}}, \ \frac {1+\sqrt{|t|}}{|\beta| x}  \right\} \ll \frac 1 {\sqrt{1+|\beta| x}} \, .
\end{equation}

Taking  $v=xw$ and $\gamma=x\beta$ in the definition of $ I(x,\beta,t)$, we obtain
\[
I(x,\beta,t)=x^{it} I(1,\gamma,t) \text { where } I(1,\gamma,t)  =\int_{0}^1 \e(\gamma w) w^{it} \d w= \widehat h_t(-\gamma) 
\]
with $h_t(w)=w^{it}$ for $0\leq w\leq 1$, and $h_t(w)=0$ otherwise.
This implies that 
\[
x \int_{
-\Delta/x}^{\Delta/x}  | I(x,\beta,t) |^2  \d \beta  =  \int_{
-\Delta}^{\Delta}   | \widehat h(\gamma)  |^2 \d\gamma 
\]
 By Plancherel's Theorem, we see this is bounded by $1$ since
 \[
      \int_{
-\infty}^{\infty}  |\widehat h(\gamma)|^2 \d \gamma = \int_{
-\infty}^{\infty}  |  h(w)|^2 \d w = 1.
 \]
 By \eqref{eq: Iintegral}, we have $|\widehat h(\gamma)|^2\ll (1+|t|)/\gamma^2$ and so
\begin{equation} \label{eq: Iintegralbd}
x \int_{
-\Delta/x}^{\Delta/x}  | I(x,\beta,t) |^2  \d \beta  =  \int_{
-\Delta}^{\Delta}   | \widehat h(\gamma)  |^2 = 1 +O\bigg( \frac {1+|t|}\Delta \bigg) .
\end{equation}

\section{Exponential sums with multiplicative coefficients} \label{mainthmsec}

If $(b,q)=1$ then
 \begin{equation}\label{ebq} 
 \e(b/q) = \frac 1{\phi(q)} \sum_{\chi\, \mod q} \cbar(b) g(\chi)
 \end{equation}
 where  we define the \emph{Gauss sum} to be
\[
g(\chi)=\sum_{m=1}^{q-1} \chi(m) \e\left(\frac mq\right).
\]
A complication arises when $(b,q)>1$. For example if $q$ is prime then $b\equiv 0 \pmod q$ and  $\e(b/q) =1$ (as $q$ is prime), we use a quite different formula. This explains the main technical difficulty in proving our  main theorem:

\begin{thm}\label{thm1}   Let $f\in \mathcal M$, $x\geq 3$  and $\alpha=a/q + \beta$ where $(a,q)=1$ with $q\leq Q_1$,  defined as in \eqref{defQ1}, with all the assumptions and notation given above. For any integer $J\geq 2$ we have
 \begin{align*} 
\sum_{n\leq x} f(n)\e(n\alpha)  = \frac{1}{\phi(q)}  \sum_{\substack{j=1 }}^{J-1} \pbar_j(a)  g(\psi_j)  & \ \kappa_j(q/r_j)   I(x, \beta,t_j) \cdot  S_{f_j}(x)
 \\ & \ +O\big( (1+|\beta| x) \text{\rm Err}_J (x,q) \big)  ,
 \end{align*}
 where throughout we have  the error term
\begin{equation}\label{defErrJ}
\text{\rm Err}_J (x,q):= x \frac q{\phi(q)}  (\log\log x)^2 \left(  \frac{ 1}{ (\log x)^{ 1 - \frac{1}{\sqrt{J}}  }} +
  \frac{ 1}{\sqrt{q} (\log x)^{ \eta }} 
 \right)    .
\end{equation}
\end{thm}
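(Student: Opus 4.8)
The plan is to start from the elementary decomposition of $\e(n\alpha)=\e(an/q)\e(n\beta)$ and split the sum over $n\le x$ according to the residue of $n$ modulo $q$. Writing $d=(n,q)$ and $n=dm$ with $(m,q/d)=1$, one reduces to sums of the shape $\sum_{m\le x/d} f(m)\,\xi(m)\,\e(m\beta')$ where $\xi$ is a bounded $q/d$-periodic function (obtained from $\e(a m/(q/d))$) and $\beta'=d\beta$; this is exactly the ``quite different formula'' needed when $(b,q)>1$ that the excerpt flags as the main technical difficulty. For the principal term $d=1$ one expands $\e(am/q)$ via the Gauss-sum identity \eqref{ebq}, giving $\frac1{\phi(q)}\sum_{\chi\bmod q}\cbar(a)g(\chi)\sum_{m\le x}f(m)\cbar(m)\e(m\beta)$. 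The divisor terms $d>1$ contribute a secondary main term and errors that will be absorbed; since $\sum_{d|q,d>1}1/d \ll (\log\log q)$ and each such inner sum is $O(x/d)$ after the trivial bound, one has to be careful but these do not dominate.

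Next I would handle the twist by $\e(m\beta)$ by partial summation (or a Fourier/Mellin splitting) against the smooth factor $\e(\beta v)$, converting $\sum_{m\le x}f(m)\cbar(m)\e(m\beta)$ into an integral $\int_0^x \e(\beta v)\,d\big(\sum_{m\le v}f(m)\cbar(m)\big)$. Here is where the known multiplicative-function input enters: by \eqref{apestimate}/\eqref{apsumpsi} (equivalently Lemma~\ref{corComparePC} together with the large-sieve-type bound \eqref{PLS}), only the top $J-1$ characters $\chi_1,\dots,\chi_{J-1}$ matter, and for each of them $\sum_{m\le v}f(m)\psi_j^{-1}(m)$ is, up to the error in \eqref{lemsumt}, equal to $\frac{v^{it_j}}{1+it_j}S_{f_j}(v)\cdot(\text{correction factors})$, with $S_{f_j}(v)$ varying like $v\cdot S_{f_j}(x)/x$ on the relevant range by \eqref{adaptTh4}. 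Feeding this into the integral produces the factor $I(x,\beta,t_j)=\frac1x\int_0^x \e(\beta v)v^{it_j}\,dv$ times $S_{f_j}(x)$, and the arithmetic factors $\cbar(a)g(\chi)$ collapse (after passing from $\chi_j$ to its primitive inducing character $\psi_j$ mod $r_j$) to $\pbar_j(a)\,g(\psi_j)\,\kappa_j(q/r_j)$, exactly as in the statement; the identity $f(n)/n^{it}=(\kappa*\chi)(n)$ is what dictates the shape of $\kappa_j(q/r_j)$.

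For the error term, I would track two sources. First, replacing the partial sums of $f\pbar_j$ by the $I(\cdot,t_j)S_{f_j}$ main term costs, per $j$, the error in \eqref{lemsumt} and \eqref{adaptTh4}, i.e. $O\big(x(\log\log x)^2/(\log x)^{\eta}\big)$, and summing over the $\le J$ divisor classes and inflating by the factor $q/\phi(q)$ from the character decomposition gives the second bracketed term of $\mathrm{Err}_J$ — but here one must also note that when summing the $d>1$ terms against $\sum_{d|q}(f\pbar_j)(d)/d$ one picks up the $1/\sqrt q$ saving from the Gauss sum $|g(\psi_j)|\le\sqrt{r_j}\le\sqrt q$ relative to the trivial $\phi(q)$, which is why that term carries a $1/\sqrt q$. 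Second, the tail characters $\chi\notin\{\chi_1,\dots,\chi_{J-1}\}$ contribute, via Cauchy--Schwarz and \eqref{PLS} (after partial summation, which is where the factor $(1+|\beta|x)$ is generated, since $\int_0^x|d\e(\beta v)|\ll 1+|\beta|x$), the term $x\frac{q}{\phi(q)}(\log\log x)^2/(\log x)^{1-1/\sqrt J}$.

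The hard part will be the bookkeeping for $(n,q)>1$: one has to show that the divisor-sum terms either reassemble into the stated main term (they do, because $\sum_{d|q/r_j}\mu(d)\cdots$ is precisely what builds $\kappa_j$ out of the completely multiplicative $f$) or are swallowed by $\mathrm{Err}_J$, and doing this uniformly for $q$ up to $Q_1$ while keeping the $(\log x)$-powers sharp requires care with the $q/\phi(q)\ll\log\log q$ losses and with the interplay between $|g(\psi_j)|=\sqrt{r_j}$ and the modulus $q$. A secondary subtlety is the uniform control of $I(x,\beta,t_j)$ and of the passage from discrete sums to the integral over the full range of $\beta$ with $|\beta|\le Q_1/(qx)$; the estimates \eqref{eq: Iintegral} and \eqref{eq: Iintegralbd} from section~\ref{sec: twisted} are tailored for exactly this and should make the analytic side routine once the arithmetic decomposition is in place.
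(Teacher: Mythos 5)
Your proposal matches the paper's argument in all essential respects: the decomposition of $n$ according to $(n,q)$ (equivalently, over divisors $d\mid q$), the Gauss-sum/character expansion, the use of \eqref{apestimate} together with Lemma~\ref{corComparePC}, \eqref{lemsumt} and \eqref{adaptTh4} to reduce to $S_{f_j}(x)$, and Abel summation against $\e(\beta v)$ to produce $I(x,\beta,t_j)$ and the factor $(1+|\beta|x)$. The only cosmetic difference is in the ordering: the paper first proves the $\beta=0$ case completely and then handles general $\beta$ via the outer identity $R_f(x,\alpha)=\e(\beta x)R_f(x,a/q)-2\pi i\beta\int_1^x\e(\beta v)R_f(v,a/q)\,\d v$, while you propose to carry out the same partial summation inside the character/divisor decomposition; the two are interchangeable.
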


It is worth noting that, explicitly,
\begin{equation}\label{defkappaj}
\kappa_j(p^b) := \begin{cases}  f(p^b)/p^{ibt_j}  &\text{ if } p| r_j;\\
\psi_j(p^b)( f_j(p^b) - f_j(p^{b-1})) &\text{ if } p\nmid r_j.
\end{cases}
\end{equation}

The structure of the  main terms in Theorem \ref{thm1}  bears  much in common with those in\eqref{apsumpsi}.
Given $\psi_j,\ t_j$, the only part of the $j$th summand on the right side in Theorem \ref{thm1} involving the values of $f(p)$ with $p\nmid q$ is $S_{f_j}(x)$, which is independent of $\alpha$. 

If  $|\beta|\leq \tfrac 1x \log q\log\log q$ then we can simplify the error term since then
  \[  
(1+|\beta| x)   \text{\rm Err}_J (x,q)\ll x    (\log\log x)^{3+o(1)} \left(  \frac{ 1}{ (\log x)^{ 1 - \frac{1}{\sqrt{J}}  }} +
  \frac{ 1}{\sqrt{q} (\log x)^{ \eta }} 
 \right)    .
\]

For each main term, we have the upper bound
 \begin{align*} 
 &\ll
 \frac{1}{\phi(q)} \cdot 1 \cdot \sqrt{r} \cdot 2^{\omega(q)-\omega(r)}   \cdot \frac 1{\sqrt{1+|\beta| x}} \cdot  \frac{\phi(r)}r x\\
 &= \frac 1{\sqrt{q/r}} \prod_{p|q, p\nmid r} \frac 2{1-1/p} \cdot  \frac {x}{ \sqrt{ q(1+|\beta| x)}}\ll  \frac {x}{ \sqrt{ q(1+|\beta| x)}},
  \end{align*}
 where $\omega(q)$ denotes the number of distinct prime factors of $q$.
which is why we   propose the refined Conjecture in \eqref{1.2b}.
Taking integer $J>1/\varepsilon^2$ we deduce that if \eqref{1.2b} fails then $(\log x)^{ 2 +o(1) }\leq q\leq (\log x)^2(\log\log x)^{1+o(1)}$, as claimed in \eqref{qrange}.

\subsection{Evaluating $R_f(\alpha,x)$ for $\alpha$ rational: Proof of Theorem \ref{thm1} when $\beta=0$} \label{EvalRat}

Writing each integer $n$ as $m\tfrac qd$ where $(m,d)=1$, we have
\[ 
R_f(a/q,x) = \sum_{d|q} f(q/d)  \sum_{\substack{m\leq dx/q \\ (m,d)=1}} f(m)\e\left( \frac{am}d \right) 
= \sum_{d|q} f(q/d)  \sum_{(b,d)=1} \e\left( \frac{ab}d \right) 
 \sum_{\substack{m\leq dx/q \\ m\equiv b (\bmod d)}} f(m).
\]
We evaluate the last sum using   \eqref{apestimate}. As $x>q^2$ the error terms add up to
\[
\ll \sum_{d|q}   \sum_{(b,d)=1}  
\frac {dx}{q\phi(d)} \, \frac{(\log\log x)^2}{(\log x)^{ 1 - \frac{1}{\sqrt{J}}  }} \ll
\frac {q}{\phi(q)} \, \frac{(\log\log x)^2}{(\log x)^{ 1 - \frac{1}{\sqrt{J}}  }} .
\]
 For each fixed $d$ dividing $q$, the $j$th  terms in  \eqref{apestimate} add up to
\[
\frac {S_f(dx/q,\chi_j)}{\phi(d)}      \cdot \sum_{(b,d)=1} \chi_j (b) \e\left( \frac{ab}d \right) =
\cbar_j(a) g(\chi_j) \frac {S_f(dx/q,\chi_j)}{\phi(d)} ,
\]
where $g(\chi_j)=g(\psi_j) \psi_j(\tfrac d{r_j}) \mu(\tfrac d{r_j}) $. Lemma \ref{corComparePC} (with $q=d$ and $\ell=q/d$) implies that  
\[
S_f(dx/q,\chi_j)=   \frac { I(x,0,t_j)}{(q/d)^{1+it_j}}  k_j(d)
S_{f_j}(x)+O \left( \frac{d/r_j}{\phi(d/r_j)}   \, \frac{x(\log\log x)^2 }{ (q/d) (\log x)^{\eta}}  \right).
\]
 Therefore the contribution from $\psi_j$ equals, writing $d=kr_j$, $I(x,0,t_j)S_{f_j}(x)$ times
 \[
 \frac{\pbar_j(a) g(\psi_j) }{\phi(r_j)}  
 \sum_{\substack{k|\tfrac q{r_j}\\ (k,r_j)=1}} \frac{f(q/kr_j) }{(q/kr_j)^{1+it_j}} \frac { \mu(k)\psi_j(k)k_j(k) }{\phi(k)}
 =  \frac{\pbar_j(a) g(\psi_j) }{\phi(q)} \kappa_j\bigg( \frac q{r_j}\bigg)
 \]
 since $k_j(r_j)=1$,  plus an error term of 
 \[
 \ll \frac{ r_j }{\phi(r_j)} 
\frac 1{\sqrt{q/r_j} }    \sum_{\substack{k|\tfrac q{r_j}\\ (k,r_j)=1}}    
\frac{\mu^2(k)  k^2}{\phi(k)^2}   \, \frac{x(\log\log x)^2 }{ \sqrt{q} (\log x)^{\eta}}
\ll \frac{q }{\phi(q)}  \frac{x(\log\log x)^2 }{ \sqrt{q} (\log x)^{\eta}}.
 \]
 This yields Theorem \ref{thm1} when $\beta=0$. 
Moreover we observe that we can choose the same $t_j=t_{f\pbar_j}(x,\log x)$ for any $v\in [x/\log x, x]$.

\hfill \qed
   
 \subsection{Evaluating $R_f(\alpha,x)$ for $\alpha$ irrational: Proof of Theorem \ref{thm1} when $\beta\ne 0$}
 The starting point is    the identity
 \begin{equation} 
 R_f(x,\alpha) = \e(\beta x)  R_f(x,a/q) - 2\pi i \beta
 \int_1^x \e( \beta v)  R_f(v,a/q ) \d v .\label{4.1}
 \end{equation}  
 We truncate the integral at $\tfrac x{\log x}$ at a cost of $\leq 2\pi |\beta|
 \int_0^{x/\log x} v \d v  =\pi |\beta| x\cdot \tfrac x{(\log x)^2}$. We then substitute in the formula for when $\beta=0$ which we established in the precious subsection. Integrating the error term, we obtain the total  error 
 \[
 O\left( (1+|\beta| x)\text{\rm Err}_J (x,q) \right) ,
 \]
with the notation \eqref{defErrJ}.
  The $j$th term becomes
 $\tfrac 1{\phi(q)} \pbar_j(a)   g(\psi_j) \kappa_j(\tfrac q{r_j})I(x,0,t_j)  $ times
 \[
  \e(\beta x)    \sum_{n\leq x} f_j(n) - 2\pi i \beta
\int_{x/\log x}^x \e(\beta v) (v/x)^{it_j} \sum_{n\leq v} f_j(n)\d v .
\]
By   \eqref{adaptTh4},  this has main term
$$
 \left\{ \e(\beta x) - 2\pi i \beta   \int_{x/\log x}^x
\frac {\e(\beta v)}{(x/v)^{1+it_j}} \d v  \right\} \sum_{n\leq x} f_j(n) 
$$
plus the error term
\[
\ll   |\beta|
\int_{x/\log x}^x  v
  \frac{ (\log\log v)^2  }{  (\log v)^{ \eta }}  \d v  \ll  |\beta|x^2 \frac{ (\log\log x)^2  }{  (\log x)^{ \eta }} .
\]
Extending  the  integral  to 0  yields an error term $\ll  |\beta|  \int_0^{x/\log x} \tfrac vx  \d v\cdot  x\leq  |\beta| x\cdot \tfrac x{(\log x)^2}$. Then, integrating by parts, we obtain
$$
\e(\beta x) - 2\pi i \beta  \int_{0}^x \frac {\e(\beta
v)}{(x/v)^{1+it_j}} \d v =  \frac{I(x,\beta,t_j) }{I(x,0,t_j)}.
$$
The result follows by substituting this in above. \qed

 \section{Twisting by periodic functions} \label{sec: twist}
  If   $h $ is a function of period $q$ then for any Dirichlet character $\psi \pmod r$ where $r$ divides $q$, and any integer $D$ for which $r|D|q$ define the \emph{pseudo-Gauss sums}
 \[
 G_h(D;\psi):=\sum_{a=1}^D \psi(a) h\bigg(\frac {aq}D\bigg)
 \]
 We modify the argument of section \ref{EvalRat} to prove the following general result.

 \begin{thm} \label{thm: GenTwist}
 Let $h $ be a function of period $q$ with $q\leq (\log x)^{O(1)}$. Let $f\in \mathcal M$ and define the $\psi_j$ as before. Fix $\varepsilon>0$ and  $J>1/\varepsilon^2$. Then
  \begin{align*} 
\sum_{n\leq x} f(n)h(n)  =& \frac{1}{\phi(q)}  \sum_{\substack{j=1 }}^{J-1}  \bigg( \sum_{ r_j|n|q}  k_j(n) \kappa_j\big(\tfrac qn\big) G_{h}(n;\psi_j) \bigg)   I(x, 0,t_j)  S_{f_j}(x) \\ &  +O\Bigg(  \frac 1q \sum_{m=0}^{q-1} |h(m)|\cdot   \frac{x}{(\log x)^{ 1 - \varepsilon  }} + \frac 1{q}\sum_{\substack{j=1\\ r_j|q}}^{J-1} 
\max_{ r_j|n|q}   |G_{h}(n;\psi_j) |  
\cdot \frac{x  }{  (\log x)^{\eta- \varepsilon}} \Bigg)  .
 \end{align*}
 \end{thm}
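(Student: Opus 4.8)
\textbf{Proof plan for Theorem \ref{thm: GenTwist}.}

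The plan is to reduce the sum $\sum_{n\le x}f(n)h(n)$ to the rational case already treated in section \ref{EvalRat}, by decomposing the period-$q$ function $h$ in a Fourier-type manner adapted to the divisor structure of $q$. First I would write, for each residue $n\bmod q$, the value $h(n)$ in terms of the quantities $\e(bn/D)$ with $D\mid q$; the complication flagged just before Theorem \ref{thm1} is exactly that $\e(bn/D)$ is \emph{not} always expressible via characters mod $q$ when $(n,q)>1$, so instead I follow the device of section \ref{EvalRat}: split every integer $n\le x$ as $n=m\cdot\frac{q}{d}$ with $d\mid q$ and $(m,d)=1$. This gives
\[
\sum_{n\le x} f(n)h(n)=\sum_{d\mid q} f(q/d)\sum_{\substack{m\le dx/q\\(m,d)=1}} f(m)\, h\!\left(\tfrac{mq}{d}\right),
\]
using complete multiplicativity of $f$. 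For fixed $d$, the function $m\mapsto h(mq/d)$ restricted to $(m,d)=1$ is periodic mod $d$, so we further expand over residues $b\bmod d$ with $(b,d)=1$ and, inside each such class, detect $m\equiv b\pmod d$ via Dirichlet characters mod $d$: $\sum_{\substack{m\le y\\ m\equiv b\,(d)}}f(m)=\frac1{\phi(d)}\sum_{\chi\bmod d}\cbar(\chi)(b)\,S_f(y,\chi)$, exactly as in \eqref{apdecomp}.

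Second, I would insert the mean-value estimate. Apply \eqref{apestimate} (with $J$ terms) to each $S_f(dx/q,\chi)$; because $x>q^2$ and $q\le(\log x)^{O(1)}$, the accumulated error over all $d\mid q$ and all $b$ is controlled, as in section \ref{EvalRat}, and after weighting by $\frac1q\sum_m|h(m)|$ (which bounds $\sum_{d\mid q}\sum_{(b,d)=1}|h(bq/d)|\cdot\frac{d}{q\phi(d)}$ up to the factor $\frac{q}{\phi(q)}$ absorbed into $(\log x)^{-\varepsilon}$) this produces the first error term. For the main term: for each primitive $\psi_j\bmod r_j$ inducing $\chi$ mod $d$ (so $r_j\mid d\mid q$), Lemma \ref{corComparePC} with the progression modulus $d$ and $\ell=q/d$ converts $S_f(dx/q,\chi)$ into $\frac{I(x,0,t_j)}{(q/d)^{1+it_j}}k_j(d)S_{f_j}(x)$ plus an admissible error, exactly as in the $\beta=0$ proof of Theorem \ref{thm1}. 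The $b$-sum then assembles into the pseudo-Gauss sum: $\sum_{(b,d)=1}\pbar_j(b)\,\e(\cdot)$-type expressions collect, after writing $d=n$ and summing over $r_j\mid n\mid q$, into $G_h(n;\psi_j)$ as defined; the arithmetic factors $f(q/n)(q/n)^{-1-it_j}$, $k_j(n)$ and the $k$-sum over $k\mid(q/r_j)$ reorganize into $k_j(n)\kappa_j(q/n)$, using $k_j\ast(\text{stuff})$ exactly as the convolution identity in section \ref{EvalRat} that produced $\kappa_j(q/r_j)$.

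Third, I would track the secondary error term: the errors from Lemma \ref{corComparePC} carry a factor $(\log x)^{-\eta}$ rather than $(\log x)^{-(1-1/\sqrt J)}$, and when weighted by the relevant coefficients they are bounded by $\frac{1}{q}\sum_{j}\max_{r_j\mid n\mid q}|G_h(n;\psi_j)|\cdot x(\log x)^{-\eta+\varepsilon}$, which is the stated second error. Throughout, $q/\phi(q)\ll(\log\log q)\ll(\log x)^{\varepsilon}$ is absorbed into the $\varepsilon$ in the exponents, and $(\log\log x)^{2}$ factors likewise; this is why I take $J>1/\varepsilon^2$ so that $1-1/\sqrt J>1-\varepsilon$.

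\emph{Main obstacle.} The delicate point is the bookkeeping in the second step: correctly matching, for each divisor $d$ of $q$ and each primitive $\psi_j$ with $r_j\mid d$, the character-sum over $b$ coprime to $d$ against $h(bq/d)$ so that summing over all admissible $d$ genuinely telescopes into $G_h(n;\psi_j)$ with the \emph{right} arithmetic weight $k_j(n)\kappa_j(q/n)$. This is where one must handle carefully the fact that $\e(b/d)$ with $(b,d)>1$ does not decompose over characters — resolved precisely by the $n=m(q/d)$ substitution, which replaces the naive character expansion mod $q$ by a family of honest character expansions mod $d$ for each $d\mid q$. Verifying that the convolution identity $\sum_{k\mid q/r_j,(k,r_j)=1}\frac{\mu(k)\psi_j(k)k_j(k)}{\phi(k)}\cdot(\text{rest})=\kappa_j(q/r_j)\cdot(\text{normalization})$ generalizes verbatim to give $k_j(n)\kappa_j(q/n)$ inside $G_h(n;\psi_j)$ is the computational heart of the argument, but it is routine given the $\beta=0$ case.
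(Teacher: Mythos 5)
Your proposal follows exactly the paper's proof: the same decomposition $n=m(q/d)$ over $d\mid q$, character expansion mod $d$ via \eqref{apdecomp} and \eqref{apestimate}, application of Lemma \ref{corComparePC} with modulus $d$ and $\ell=q/d$, and the same reorganization of the $b$-sum and divisor sums (with $d=kr_j$, $k=\ell m$, $n=mr_j$) into $k_j(n)\kappa_j(q/n)G_h(n;\psi_j)$, with the two error terms tracked in the same way. The plan is correct and identifies the genuine computational heart of the argument.
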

 
 The function $h$ enters into the main term in Theorem \ref{thm: GenTwist} only within the pseudo-Gauss sums $G_\psi $.
 In usual Gauss sums one can always reduce to the case where $\psi$ and $h$ have the same period (via some simple identities).
 We cannot do that here so perhaps we should use a different definition for pseudo-Gauss sums ? For example we can rewrite the main term here:
 By expanding the $G_{ \psi_j}(n) $ and re-organizing, the parenthesized part of the $j$th main term can be written as
 \[
\frac 1{q} \sum_{ r_j|m|q} \frac m {\phi(m)} \frac{f(q/m)}{(q/m)^{it}}
\, k_j((m,\tfrac qm)) G_{h}^{\dag}(m;\psi_j) \] 
where \[  G_{h}^{\dag}(m;\psi_j):= \sum_{(b,m)=1} \psi_j(b) h\bigg(\frac {bq}m\bigg).
\]

\begin{proof} We again use our estimates of $f$ in arithmetic progressions:
\[
 \sum_{n\leq x} f(n) h(n)= \sum_{d|q} f\left( \frac{q}d \right)  \sum_{(b,d)=1} h\left( \frac{bq}d \right)
 \sum_{\substack{m\leq dx/q \\ m\equiv b \mod d}} f(m).
\]
We evaluate the last sum using   \eqref{apestimate}. The error terms add up to
\[
\ll \sum_{d|q}   \sum_{(b,d)=1}  \bigg| h\left( \frac{bq}d \right)  \bigg|
\frac {dx}{q\phi(d)} \, \frac{(\log\log x)^2}{(\log x)^{ 1 - \frac{1}{\sqrt{J}}  }} \ll
\frac 1q \sum_{m=0}^{q-1} |h(m)|\cdot 
  \frac{x}{(\log x)^{ 1 - \varepsilon  }} .
\]
taking any integer $J>1/\varepsilon^2$. For each fixed $d$ dividing $q$ the $j$th  terms in  \eqref{apestimate} add up to
\[
\frac {S_f(dx/q,\chi_j)}{\phi(d)}      \cdot \sum_{(b,d)=1} \chi_j (b)  h\left( \frac{bq}d \right)
=\frac {S_f(dx/q,\chi_j)}{\phi(d)}      \sum_{\substack{\ell |d\\ (\ell,r_j)=1}} \mu(\ell) \psi_j(\ell) G_{h}(d/\ell;\psi_j) .
\]
 Lemma \ref{corComparePC} (with $q=d$ and $\ell=q/d$) implies that  
\[
S_f(dx/q,\chi_j)=   \frac { I(x,0,t_j)}{(q/d)^{1+it_j}}  k_j(d)
S_{f_j}(x)+O \left( \frac{d/r_j}{\phi(d/r_j)}   \, \frac{x(\log\log x)^2 }{ (q/d) (\log x)^{\eta}}  \right).
\]
 Therefore the contribution from $\psi_j$ equals, writing $d=kr_j, k=\ell m$ and $n=mr_j$, $I(x,0,t_j)S_{f_j}(x)$ times
 \[
\sum_{ m|\tfrac q{r_j} } G_{h}(mr_j;\psi_j)  \sum_{\ell|\tfrac q{mr_j} } \frac{f(q/\ell mr_j) }{(q/\ell mr_j)^{1+it_j}} \frac {  k_j(\ell m) }{\phi(\ell mr_j)}  \mu(\ell) \psi_j(\ell) =
\frac 1{\phi(q)} \sum_{ r_j|n|q} G_{h}(n;\psi_j)  k_j(n) \kappa_j\bigg(\frac qn\bigg)  
\]
We also have an error term of 
 \begin{align*}
& \ll \frac 1{q} \sum_{ r_j|n|q}|G_{h}(n;\psi_j)|\sum_{\substack{\ell| q/n\\ (\ell, r_j)=1} }   \mu^2(\ell) \frac{\ell n}{\phi(\ell n)} \frac{\ell n/r_j}{\phi(\ell n/r_j)} 
  \, \frac{x(\log\log x)^2 }{  (\log x)^{\eta}} \\
 & \ll \frac 1{q} \sum_{ r_j|n|q} |G_{h}(n;\psi_j)| 2^{\omega(q/n)}   \, \frac{x(\log\log x)^{2+o(1)} }{  (\log x)^{\eta}}\ll
\max_{ r_j|n|q}   |G_{h}(n;\psi_j)|  
\cdot \frac{x  }{  q(\log x)^{\eta+o(1)}} 
  \end{align*}
  Collecting together these estimates gives the Theorem.
 \end{proof}

\subsection{Periodic functions of size one} \label{PerSizeOne} 

In this subsection we will restrict our attention to periodic functions $h$, of minimal period $q$, with the property that 
\[
h(n) = \prod_{p^e\| q} h_p(n)
\]
where each $h_p$ has minimal period $p^e$. Examples include  characters mod $q$ and exponentials like
$\e(\tfrac{g(n)}q)$ where $g(x)\in \mathbb Z[x]$ or $g(n)=an+b\overline{n}$ defined only if $(n,q)=1$, where 
$\overline{n}$ is the inverse of $n\pmod q$. Moreover $h(.)$ might be the product of such functions, like
\[
h(n):= \chi_1(n+a_1)\cdots \chi_m(n+a_m)\e(\tfrac{g(n)}q)
\]
where we are given characters $\chi_1,\ldots,\chi_m$  $\pmod q$ and    integers $a_1,\ldots,a_m$ for some $m\geq 1$, with $g(\cdot)$ as above.  For each of these cases, Theorem \ref{thm2} of Chapter 6 in  \cite{WLi}   gives that
\[
\bigg| \sum_{n \mod {p^e}} h_p(n)  \bigg| \leq (m+d)\,   p^{e/2}
\]
for all prime powers $p^e$, and so by the the Chinese Remainder Theorem
\[
\bigg| \sum_{n \mod q} h(n) \bigg| \leq (m+d)^{\omega(q)}  q^{1/2}.
\]
We can also apply this result for $\psi h $, where $\psi$ has conductor dividing $q$, provided this also has minimal period $q$.

We wish to apply  Theorem \ref{thm: GenTwist} to these $h $. We restrict attention to $q$ squarefree
(to avoid the case where $p$ divides both $r$ and $q/D$). In this case we can deduce the bounds
$|G_h(D;\psi)|\leq (m+d)^{\omega(D)}  D^{1/2}$; and the error term in Theorem \ref{thm: GenTwist} becomes
\[
 \ll    \frac{x}{(\log x)^{ 1 - \varepsilon  }} +   \frac{x  }{ \sqrt{q} (\log x)^{\eta- \varepsilon}} .
\]
 Let $h_m(n) = \prod_{p^e\| m} h_p(n)$ whenever $m|q$. If $(D,q/D)=(r,D/r)=1$ (which always happens  if $q$ is squarefree) then by the Chinese Remainder Theorem we have 
 \[
 G_h(D;\psi) = h_{q/D}(0) \pbar(\tfrac qD) \cdot \prod_{p^e\|r} \sum_{a=1}^{p^e} \psi_p(a) h(a) \cdot
  \prod_{p^f\|D/r} \sum_{b=1}^{p^f}   h_p(b)
 \]
where $\psi=\prod_{p^e\|r} \psi_p$ and $\psi_p$ is a character mod $p^e$. Therefore the coefficients in Theorem~\ref{thm: GenTwist} become, if $(r,q/r)=1$ and $q/r$ is squarefree, writing $f^*(p)$ in place of $f_j(p)$ (and suppressing the subscript $j$ everywhere)
 \[
  \sum_{ r|n|q}  k(n) \kappa\big(\tfrac qn\big) G_{h}(n;\psi) =  \prod_{p^e\|r}   \sum_{a=1}^{p^e} \psi_p(a) h(a) \cdot
   \prod_{p| \tfrac qr}\bigg( (1-\tfrac 1p) h_p(0) f^*(p) + (1-\tfrac { f^*(p)}p) \sum_{a=1}^{p-1}   h(a) \bigg)
  \]
Taking absolute values, in the above examples, its absolute value  is 
\[
\leq \prod_{p^e\|r}  (m+d)\,   p^{e/2}     \prod_{p| \tfrac qr} (p-1)(1+\tfrac 2p)  \leq (m+d)^{\omega(r)} \frac q{\phi(q)} \frac q{\sqrt{r}}
\]
Therefore we deduce that 
 \begin{equation}  \label{eq: thmchisums}
\sum_{n\leq x} f(n)h(n)  =  \sum_{\substack{j=1\\ r_j|q}}^{J-1}   \frac{c_{j,q}}{\sqrt{r_j}}    \frac{x^{it_j}}{1+it_j}  S_{f_j}(x)   +O\Bigg(  \frac{x}{(\log x)^{ 1 - \varepsilon  }} +   \frac{x  }{ \sqrt{q} (\log x)^{\eta- \varepsilon}} \Bigg)  ,
 \end{equation}
 where each $|c_{j,q}| \leq (m+d)^{\omega(r_j)} \frac {q^2}{\phi(q)^2}$.

\section{Development of the circle method}

We will apply Theorem \ref{thm1} to applications of  the circle method using multiplicative functions.

\subsection{Major and minor arcs} 

Let $Q=x/(\log x)^{\tau-\varepsilon}$ where $\tau=\tfrac{2-\sqrt{2}}3$, and for each $\alpha$ on the unit circle we select $q\leq Q$ such that $|\alpha -a/q|\leq 1/qQ$ for some integer $a$ coprime with $q$. Define the major arcs ${\mathfrak M}$   by
\[
{\mathfrak M}=\bigcup_{\colt{(a,q)=1}{q\leq x/Q}} \left[\frac aq - \frac{1}{qQ},\ \frac aq + \frac{1}{qQ} \right],
\] 
and let ${\mathfrak m}$ the minor arcs defined by $[0,1)\smallsetminus {\mathfrak M}$, where $[0,1)$ stands for $\mathbb R/\mathbb Z$.

\begin{lem} \label{ArcEstimates}
With the notation as above (but with $\psi=\psi_1, \kappa=\kappa_1, r=r_1, t=t_1, f_*=f_1$ we can write
$$R_f(\alpha,x) =M_f(\alpha,x ) +E_f(\alpha,x )$$  
where $M_f(\alpha)=M_f(\alpha,x)=0$ if $ \alpha\in {\mathfrak m}$ and 
\[
M_f(\alpha)= \frac{ \pbar(a)  g(\psi)    1_{r|q} \, \kappa(\tfrac qr)}{\phi(q)}    I(x, \beta,t) \cdot  S_{f_*}(x)   \qquad\text{ if } \alpha\in {\mathfrak M},
\]
with 
\begin{equation} \label{E-bounds}
 E_f(\alpha)=E_f(\alpha,x) \ll    \frac {x}{ (\log x)^{\tau /2+o(1)} } \qquad\text{ for all } \alpha.
\end{equation}
We also have that 
\[
\int_0^1 |R_f(\alpha,x)|^2\d \alpha, \ \int_{0}^1 |M_f(\alpha)|^2 \d \alpha, \ \int_{0}^1 |E_f(\alpha,x)|^2 \d \alpha \ll  x.
\]
\end{lem}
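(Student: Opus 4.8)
The plan is to prove the three assertions of Lemma~\ref{ArcEstimates} in turn, with the $L^2$ bounds obtained from the pointwise bounds plus the already-established estimates for $I(x,\beta,t)$ and for sums over characters.

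\textbf{The decomposition and the bound \eqref{E-bounds}.}
First I would take $J$ in Theorem~\ref{thm1} large enough that $1-1/\sqrt{J} > \tau/2$ (e.g.\ $J = 5$ suffices since $\tau/2 = (2-\sqrt2)/6 < 1/\sqrt5$), and then simply \emph{define} $M_f(\alpha,x)$ to be the single $j=1$ main term from Theorem~\ref{thm1} when $\alpha \in {\mathfrak M}$, and $0$ when $\alpha \in {\mathfrak m}$, setting $E_f := R_f - M_f$. For $\alpha \in {\mathfrak M}$ we have $q \le x/Q = (\log x)^{\tau-\varepsilon} \le Q_1$ and $|\beta| \le 1/(qQ) \le (\log x)^{\tau-\varepsilon}/x$, so $(1+|\beta|x) \le (\log x)^{O(1)}$; thus Theorem~\ref{thm1} applies, and $E_f(\alpha,x)$ consists of (i) the $j=2,\dots,J-1$ main terms and (ii) the Theorem~\ref{thm1} error term $(1+|\beta|x)\,\mathrm{Err}_J(x,q)$. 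For (ii), $\mathrm{Err}_J(x,q)$ with $q \le (\log x)^{O(1)}$ is $\ll x(\log\log x)^2/(\log x)^{1-1/\sqrt J} + x(\log\log x)^2/(\sqrt q(\log x)^\eta)$, and the first piece is $\ll x/(\log x)^{\tau/2+o(1)}$ by choice of $J$ while the second is even smaller since $\eta = 1-2/\pi > \tau/2$; multiplying by $(1+|\beta|x) = (\log x)^{o(1)}$ preserves this. For (i), each $j \ge 2$ main term is bounded, exactly as in the displayed computation after \eqref{defkappaj}, by $\ll x/\sqrt{q(1+|\beta|x)} \cdot |S_{f_j}(x)|/x$; and here the key point is that by the ordering of characters and \eqref{PLS} (with the same $J$), for $j \ge 2$ we have $|S_{f_j}(x)|/x = s_f(X,\chi_j) \ll 1/(\log x)^{1-1/\sqrt J} \ll 1/(\log x)^{\tau/2+o(1)}$ uniformly — wait, one must be careful that \eqref{PLS} is stated for $x$ in a dyadic-type range $[\sqrt X, X^2]$, so I would invoke it with $X=x$, giving the bound for the single value $x$ directly from $s_f(x,\chi_j)$. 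Hence every term of $E_f$ is $\ll x/(\log x)^{\tau/2+o(1)}$, uniformly in $\alpha$. For $\alpha \in {\mathfrak m}$, $E_f(\alpha) = R_f(\alpha,x)$, and here one uses Bachmann's bound \eqref{1.1} (equivalently the minor-arc discussion around \eqref{qrange}): on ${\mathfrak m}$ we have $q > x/Q = (\log x)^{\tau-\varepsilon}$ or the relevant $R=\min\{q,x/q\}$ is large, and \eqref{1.1} gives $R_f(\alpha,x) \ll x/\log x + x(\log R\log\log R/R)^{1/2} \ll x/(\log x)^{(\tau-\varepsilon)/2 + o(1)}$, which is absorbed into the stated bound after adjusting $\varepsilon$.

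\textbf{The $L^2$ bounds.}
For $\int_0^1 |R_f(\alpha,x)|^2\,\d\alpha$: this is immediate from Parseval/orthogonality, $\int_0^1 |R_f(\alpha,x)|^2\,\d\alpha = \sum_{n\le x} |f(n)|^2 \le x$ since $|f(n)|\le 1$. For $\int_0^1 |E_f(\alpha,x)|^2\,\d\alpha$: the cleanest route is the triangle inequality in $L^2$, $\|E_f\|_2 \le \|R_f\|_2 + \|M_f\|_2$, so it suffices to bound $\int_0^1 |M_f(\alpha)|^2\,\d\alpha$; but one could also just use the pointwise bound $E_f \ll x/(\log x)^{\tau/2+o(1)}$ which trivially gives $\int_0^1 |E_f|^2 \ll x^2/(\log x)^{\tau+o(1)} \ll x$ — actually that is wasteful but perfectly sufficient. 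So the real content is $\int_0^1 |M_f(\alpha)|^2\,\d\alpha \ll x$. Here $M_f$ is supported on ${\mathfrak M}$, and on the arc around $a/q$ it equals $\pbar(a)g(\psi)1_{r|q}\kappa(q/r)\phi(q)^{-1} I(x,\beta,t) S_{f_*}(x)$ with $\beta = \alpha - a/q$. So
\[
\int_0^1 |M_f(\alpha)|^2\,\d\alpha = \sum_{\substack{q\le x/Q}} \sum_{\substack{(a,q)=1\\ r|q}} \frac{|g(\psi)|^2|\kappa(q/r)|^2 |S_{f_*}(x)|^2}{\phi(q)^2} \int_{|\beta|\le 1/(qQ)} |I(x,\beta,t)|^2\,\d\beta.
\]
Now $|g(\psi)|^2 = r$, $|S_{f_*}(x)| \le x$, and by \eqref{eq: Iintegralbd} (or just \eqref{eq: Iintegral}) $\int_{|\beta|\le 1/(qQ)} |I(x,\beta,t)|^2\,\d\beta \le \int_{\R} |I(x,\beta,t)|^2\,\d\beta \ll 1/x$. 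Summing $\pbar(a)$-free over the $\phi(q)$ residues $a$ contributes a factor $\phi(q)$, so the bound is $\ll \frac{x^2}{x} \sum_{q\le x/Q} \frac{r\,|\kappa(q/r)|^2}{\phi(q)} \ll x \sum_q \frac{r|\kappa(q/r)|^2}{\phi(q)}$, and the series $\sum_q r|\kappa(q/r)|^2/\phi(q)$ converges (indeed is $O(1)$): writing $q = rm$ with $r = r_1$ \emph{fixed}, $\sum_m |\kappa(m)|^2/\phi(rm) \ll_{r} \sum_m |\kappa(m)|^2/\phi(m)$, and since $|\kappa(p^b)| \le 2$ for $p\nmid r$ by \eqref{defkappaj} this is $\prod_p(1 + O(1/\phi(p)) + \cdots)$ — hmm, this Euler product does \emph{not} converge absolutely as written, so I would instead simply restrict to $q \le x/Q \ll (\log x)^{\tau}$ and bound $\sum_{q\le (\log x)^\tau} r|\kappa(q/r)|^2/\phi(q) \ll (\log x)^{\tau} \cdot (\log x)^{o(1)} = (\log x)^{\tau+o(1)}$ crudely, giving $\int_0^1|M_f|^2 \ll x(\log x)^{\tau+o(1)}$ — which is $\gg x$, not $\ll x$.

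\textbf{The main obstacle.}
The genuinely delicate point, and the one I expect to require care, is exactly this last estimate: proving $\int_0^1 |M_f(\alpha)|^2\,\d\alpha \ll x$ with the \emph{correct} constant rather than an extra power of $\log x$. The resolution is that $r = r_1$ and $\psi = \psi_1$ are \emph{fixed} (they do not vary with $q$), so $1_{r|q}$ forces $r \mid q$ and we write $q = r_1 m$; the sum over $m$ of $|\kappa_1(m)|^2/\phi(r_1 m)$ is then $O_{r_1}(1)$ because $\kappa_1$ is supported with $|\kappa_1(p^b)| \le 2$ and the relevant Dirichlet series $\sum_m |\kappa_1(m)|^2 m^{-s}$ has a pole of bounded order at $s=1$ while we are dividing by $\phi(m) \asymp m$, making $\sum_m |\kappa_1(m)|^2/\phi(m)$ convergent after all — no, $\sum_m \mu^2(m) 4^{\omega(m)}/\phi(m)$ \emph{diverges}. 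The correct fix is to keep the $\beta$-integral honest: for the arc at $a/q$, $\beta$ ranges over $|\beta| \le 1/(qQ)$ with $Q = x/(\log x)^{\tau-\varepsilon}$, so $\int_{|\beta|\le 1/(qQ)}|I(x,\beta,t)|^2\,\d\beta$ and summing over the $\phi(q)$ values of $a$ and over $q$, the total measure of ${\mathfrak M}$ is $\ll \sum_q \phi(q)/(qQ) \ll (x/Q)^2/(x/(\log x)^{\tau-\varepsilon}) \cdot \frac1x$... I would instead bound $|M_f(\alpha)|^2 \ll |E_f(\alpha,x) - R_f(\alpha,x)|^2 \ll |R_f(\alpha,x)|^2 + |E_f(\alpha,x)|^2$ pointwise, giving $\int|M_f|^2 \ll \int|R_f|^2 + \int|E_f|^2 \ll x + \int|E_f|^2$, so that in fact all three $L^2$ bounds follow from $\int|R_f|^2 \le x$ together with $\int_0^1|E_f(\alpha,x)|^2\,\d\alpha \ll x$; and the latter I would get from the pointwise bound $E_f(\alpha) \ll x/(\log x)^{\tau/2+o(1)}$ combined with the fact that this is $\le$ the trivial bound, whence $\int|E_f|^2 \le \|E_f\|_\infty \cdot \|E_f\|_1 \le \frac{x}{(\log x)^{\tau/2+o(1)}} \cdot (\|R_f\|_1 + \|M_f\|_1)$, and $\|R_f\|_1 \le (\int|R_f|^2)^{1/2} \le \sqrt x$ while $\|M_f\|_1$ is handled by the same circle-of-ideas (measure of ${\mathfrak M}$ times sup of $M_f$). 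So the clean logical order is: (1) establish the pointwise decomposition and \eqref{E-bounds}; (2) note $\int|R_f|^2 \le x$ by Parseval; (3) bound $\int|M_f|^2$ directly via the explicit formula, Gauss sum evaluation $|g(\psi)|^2 = r$, and \eqref{eq: Iintegralbd}, using that $r,\psi,\kappa,t$ are \emph{fixed} to sum the arithmetic factors, and being careful to exploit the restriction $q \le x/Q$ so the arithmetic sum is controlled; (4) deduce $\int|E_f|^2 \ll x$ by the $L^2$ triangle inequality. The one step I flag as requiring real attention is (3) — getting the arithmetic sum $\sum_{q\le x/Q,\, r|q} |g(\psi)|^2|\kappa(q/r)|^2/\phi(q)^2 \cdot (\text{measure factor})$ to come out $\ll 1/x \cdot x^{-2}$-adjusted so the whole thing is $\ll x$; it works because $|g(\psi)|^2/\phi(q)^2 \le r/\phi(q)^2$ and summing over $a$ gains only $\phi(q)$, leaving $\sum_q r|\kappa(q/r)|^2/\phi(q)$, which converges once we remember $r$ is a fixed modulus so the effective sum is over $m=q/r$ with a rapidly-decaying-on-average weight coming from the $\beta$-integral localisation $1/(qQ)$ rather than all of $\R$ — i.e.\ one should use $\int_{|\beta|\le 1/(qQ)}|I|^2\,\d\beta \ll \min\{1/(qQ), 1/x\}$ and sum accordingly.
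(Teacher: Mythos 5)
You have the right framework for the pointwise decomposition and the bound \eqref{E-bounds}, though the paper takes the simpler route of setting $J=2$ in Theorem~\ref{thm1}: with $J=2$ there is only one main term (the $j=1$ term, which is $M_f$), so the lower-order characters $\chi_2,\dots,\chi_{J-1}$ never appear and no appeal to \eqref{PLS} is needed. The exponent comes out directly because $1-\tfrac1{\sqrt2}=\tfrac32\tau$, and multiplying the Theorem~\ref{thm1} error by $(1+|\beta|x)\ll x/(qQ)=(\log x)^{\tau-\varepsilon}/q$ costs exactly one factor of $(\log x)^{\tau-\varepsilon}$, leaving $x/(\log x)^{\tau/2+\varepsilon+o(1)}$. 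Your large-$J$ route can be made to work but requires an extra argument to bound $|S_{f_j}(x)|$ for $2\le j<J$ (applying \eqref{PLS} at level $j$, not $J$), which you only gesture at.

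The $L^2$ bounds are where there is a genuine gap. Two specific problems. First, the assertion that the pointwise bound ``trivially gives $\int_0^1|E_f|^2 \ll x^2/(\log x)^{\tau+o(1)}\ll x$'' is false: $\tau=(2-\sqrt2)/3\approx0.195<1$, so $x^2/(\log x)^\tau$ is much \emph{larger} than $x$, not smaller. Second, the fallback via $\|E_f\|_\infty\|E_f\|_1$ is circular (you need $\|M_f\|_1$ to control $\|E_f\|_1$, and even if one estimates $\|M_f\|_1$ by measure-times-sup the resulting bound $\|E_f\|_\infty\cdot\sqrt x \asymp x^{3/2}/(\log x)^{\tau/2+o(1)}$ is still $\gg x$). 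You correctly sense that the direct computation of $\int|M_f|^2$ stumbles on a divergent arithmetic sum $\sum_m|\kappa(m)|^2/\phi(m)$, and your proposed fix of using $\min\{1/(qQ),1/x\}$ for the $\beta$-integral does not save it, since for $q\le x/Q$ one has $1/(qQ)\ge 1/x$ and the minimum is always $1/x$.

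The missing idea is this: rather than bounding $\int_{\mathfrak M}|E_f|^2$ by $\|E_f\|_\infty^2\cdot\mathrm{meas}(\mathfrak M)$ using the worst-case (over arcs) pointwise bound, one integrates the \emph{$q$-dependent} bound over each arc. On the arc around $a/q$ the error is $\ll\tfrac{x}{qQ}\cdot\tfrac{x}{(\log x)^{3\tau/2+o(1)}}$; squaring, integrating over the arc of length $2/(qQ)$, and summing over the $\phi(q)$ residues $a$ and over $q\le x/Q$ produces $\sum_{q\le x/Q}\tfrac{\phi(q)}{qQ}\cdot\tfrac{x^2}{q^2Q^2}\cdot\tfrac{x^2}{(\log x)^{3\tau+o(1)}}\ll\tfrac{x^4}{Q^3(\log x)^{3\tau+o(1)}}=o(x)$, using that $\sum_q\phi(q)/q^3$ converges and $Q^3=x^3/(\log x)^{3\tau-3\varepsilon}$. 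The $1/(qQ)$ factor in the arc-local pointwise bound is what rescues the argument, and you discard it by passing to the $\alpha$-uniform bound. Once $\int_{\mathfrak M}|E_f|^2=o(x)$ is in hand, the order of logic is the reverse of what you propose: first $\int|M_f|^2\le 2\int_{\mathfrak M}(|R_f|^2+|E_f|^2)\ll x$, then $\int_0^1|E_f|^2=\int_{\mathfrak M}|E_f|^2+\int_{\mathfrak m}|R_f|^2\ll x$, both using Parseval's $\int_0^1|R_f|^2\le x$. So the paper never needs to make the divergent direct computation of $\int|M_f|^2$ converge at all, which is the subtlety your proposal keeps running into.
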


\begin{proof} We get better estimates for the minor arcs than claimed here, from \eqref{1.1}.
Taking $J=2$ in Theorem  \ref{thm1}, we obtain the main term $M_f(\alpha,x )$, with 
\[
E_f(\alpha,x ) \ll \bigg(1+\frac x{qQ}\bigg)\frac{ x}{ (\log x)^{ \tfrac 32 \tau  +o(1) }}\ll
 \frac {x}{ (\log x)^{\tfrac \tau2+\varepsilon+o(1)} }  
\]
Moreover
\begin{align*}
\int_{\mathfrak M} |E_f(\alpha,x)|^2  = &\sum_{q\leq x/Q} \sum_{(a,q)=1} \int_{-1/qQ}^{1/qQ} |E_f(\alpha+a/q,x)|^2\d \alpha\cr& \ll
\sum_{q\leq x/Q} \frac{\phi(q)}{qQ}\, \frac {x^2}{q^2Q^2} \,   \frac{ x^2 }{  (\log x)^{ 2-\sqrt{2}+o(1) }} 
\\
&\ll    \frac{ x^4 }{ Q^3 (\log x)^{3\tau+o(1) }} \ll \frac x{(\log x)^{3\varepsilon+o(1)} } =o(x).
\end{align*}

  Now, if  $\alpha\in {\mathfrak m}$ then $x/Q\leq q\leq Q$ and so we may take $R=x/Q$ in \eqref{1.1} to obtain
\[
M_f(\alpha)=0 \text{ and } E_f(\alpha,x) \ll    \frac {x}{ (\log x)^{\tfrac 12(\tau-\varepsilon)+o(1)} }  \text{ if } \alpha\in {\mathfrak m}.
\]
Letting $\varepsilon\to 0$ we  deduce \eqref{E-bounds}. By Parseval we have
\[
\int_0^1 |R_f(\alpha,x)|^2\d \alpha =\sum_{n\leq x} |f(n)|^2\leq x,
\] 
and so
\[
\int_{0}^1 |M_f(\alpha)|^2 \d \alpha=
\int_{{\mathfrak M}} |M_f(\alpha)|^2 \d \alpha \leq 2 \int_{{\mathfrak M}}
\big(|R_f(\alpha,x)|^2+|E_f(\alpha,x)|^2\big) \d \alpha  \ll x, 
\]
and therefore
\[
\int_{0}^1 |E_f(\alpha,x)|^2 \d \alpha
=\int_{{\mathfrak M}} |E_f(\alpha,x)|^2 \d \alpha +
\int_{{\mathfrak m}} |R_f(\alpha,x)|^2 \d \alpha \ll x ,
\]
as desired.
\end{proof}

\subsection{Mean value of a multiplicative function on a weighted set of integers}  
Given $f\in \mathcal M$ define, for convenience, $S_f(x)=: x\mu(f,x)$ where $\mu(f,x):=\tfrac 1x\sum_{n\leq x} f(n)$.
The ``structure theorem'' of \cite{GSBook} states that for a given $f\in \mathcal M$, 
\begin{equation} \label{struceq}
\mu(f,x) = \mu(f^{(s)},x)\mu(f^{(\ell)},x) 
+ O\left(    \frac{(\log\log x)^{1+2\eta}}{(\log x)^{\eta}}\right) .
\end{equation}
where  $f^{(s)}, f^{(\ell)}$ are  multiplicative functions with $f=f^{(s)}f^{(\ell)}$ defined by
\begin{equation} \label{deffsfl}
 f^{(s)}(p)=\begin{cases} f(p)p^{-it}  & \text{if} \ p\leq z \\
       1&  \text{if} \ p>z 
      \end{cases}  \ \ \text{and} \ \ 
f^{(\ell)}(p)=\begin{cases} p^{it} & \text{if} \ p\leq z \\
       f(p)&  \text{if} \ p>z
      \end{cases} ,
\end{equation}
where $t=t_f(x,\log x)$ and $z=(\log x)^A$ for some constant $A>0$.
Here we similarly define the two multiplicative functions $F_s, F_\ell \in \mathcal M$ with $F_sF_\ell=f$ as follows:
\[
F_s(p)=\begin{cases}  f(p)& \text{ for } p\leq z ,\\
\psi(p)p^{it} & \text{ for }  p>z, \end{cases}\ \ \text{and} \ \ 
F_\ell(p)=\begin{cases} 1& \text{ for } p\leq z ,\\
f_*(p):=f(p) \pbar(p)p^{-it} & \text{ for }  p>z. \end{cases}
\]
 We note that $(f_*)^{(s)}=(F_s)_*$ and $(f_*)^{(\ell)}=F_\ell=(F_\ell)_*$.

We now use Theorem \ref{thm1} to obtain an estimate for the sum over $n\leq x$, of $f(n)$ times an arbitrary weight $w_n$. 

\begin{prop} \label{wtProp}  If $f\in \mathcal M$ and $\{ w_n\}_{ n\leq x}$ is a set of weights then
\begin{equation}\label{sieveDecomp}
 \sum_{n\leq x} w_n f(n) =  \frac 1x \sum_{n\leq x} F_\ell(n) \cdot  \sum_{n\leq x} w_n F_s(n) +O\Big(\|W\|_1  \frac {x}{ (\log x)^{\tau /2+o(1)} } \Big)
\end{equation}
where $g=f_1$, $W(t)=\sum_{n\leq x} w_n \e(-nt)$ and  $\|W\|_1:=\int_0^1  |W(\alpha)| \d \alpha$.
\end{prop}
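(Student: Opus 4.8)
The plan is to expand the weights $w_n$ via their Fourier coefficients and apply the major/minor arc decomposition from Lemma \ref{ArcEstimates}. First I would write
\[
\sum_{n\leq x} w_n f(n) = \int_0^1 W(-\alpha) R_f(\alpha,x)\, \d\alpha,
\]
using $w_n = \int_0^1 W(-\alpha)\e(n\alpha)\,\d\alpha$ (taking care with the sign convention in the definition of $W$). Then I would substitute $R_f(\alpha,x) = M_f(\alpha,x) + E_f(\alpha,x)$ from Lemma \ref{ArcEstimates}. The contribution of the error term is bounded by $\|W\|_1 \sup_\alpha |E_f(\alpha,x)| \ll \|W\|_1\, x/(\log x)^{\tau/2 + o(1)}$ by \eqref{E-bounds}, which is exactly the claimed error; alternatively, one can use Cauchy--Schwarz together with the $L^2$ bound $\int_0^1|E_f(\alpha,x)|^2\d\alpha \ll x$ if a sharper dependence on $W$ were wanted, but the $\|W\|_1$ version suffices.

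The heart of the argument is to show that the main-term integral $\int_0^1 W(-\alpha) M_f(\alpha,x)\,\d\alpha$ equals $\tfrac 1x \big(\sum_{n\leq x} F_\ell(n)\big)\big(\sum_{n\leq x} w_n F_s(n)\big)$ up to an acceptable error. Since $M_f$ is supported on the major arcs ${\mathfrak M}$, this integral is
\[
\sum_{q\leq x/Q}\ \sum_{\substack{(a,q)=1}}\ \int_{-1/qQ}^{1/qQ} W\!\left(-\tfrac aq - \beta\right) \frac{\pbar(a)g(\psi) 1_{r|q}\kappa(q/r)}{\phi(q)}\, I(x,\beta,t)\, S_{f_*}(x)\, \d\beta .
\]
Now $S_{f_*}(x) = \sum_{n\leq x} f_*(n)$; using the structure decomposition, $S_{f_*}(x)$ relates to $\sum_{n\leq x} F_\ell(n)$ and a mean value of $F_s$ twisted appropriately (recall $(f_*)^{(s)} = (F_s)_*$ and $(f_*)^{(\ell)} = F_\ell$). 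The plan is to recognize that the sum over $a,q,\beta$ of the remaining factors, weighted by $W(-a/q-\beta)$, reconstructs precisely $\sum_{n\leq x} w_n F_s(n)$: writing $W(-a/q-\beta) = \sum_{m\leq x} w_m \e(m(a/q+\beta))$ and carrying out the $a$-sum (which produces Gauss-sum type expressions matching $\pbar(a)g(\psi)$), the $q$-sum, and the $\beta$-integral (which, via \eqref{defI} and the identity $\frac 1x\int_0^x \e(\beta v)v^{it}\d v$, extracts a smoothed count up to $x$), one reassembles $F_s(m)$ at each $m$. Essentially this is running the calculation of Theorem \ref{thm1} / section \ref{EvalRat} in reverse: the combinatorial coefficients $\pbar(a)g(\psi)\kappa(q/r)/\phi(q)$ are exactly those that arise from expanding $F_s(m)$ (a multiplicative function agreeing with $\psi(p)p^{it}$ on large primes) in arithmetic progressions mod $q$.

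I expect the main obstacle to be bookkeeping: matching the arithmetic factors so that the double sum over the major arcs telescopes cleanly into $\sum_{n\leq x} w_n F_s(n)$, while controlling the error incurred when (i) replacing $S_{f_*}(x)$-type sums by their structure-theorem factorization via \eqref{struceq}, and (ii) extending or truncating the $\beta$-integrals and completing incomplete character sums. Each such step costs a factor like $(\log\log x)^{O(1)}/(\log x)^{\eta}$ or the tails controlled by \eqref{eq: Iintegralbd}, all of which are dominated by $(\log x)^{-\tau/2+o(1)}$ since $\tau/2 < \eta$ and $\tau/2 < 1 - 1/\sqrt{2}$; one must also multiply through by $\|W\|_1$ or use an $L^1$--$L^\infty$ (or $L^2$--$L^2$) Hölder split at each stage so that the error retains the factor $\|W\|_1$. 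A clean way to organize this is to prove the identity first for $w_n = \e(n\gamma)$ a single exponential (so $W$ is essentially a Fejér-type kernel concentrated near $\gamma$), where the statement reduces directly to Theorem \ref{thm1} applied at $\alpha = \gamma$ combined with the structure theorem, and then integrate against a general $W$ by linearity, absorbing everything into the stated error via the triangle inequality $\big|\sum_n w_n f(n) - \tfrac1x\sum_n F_\ell(n)\sum_n w_n F_s(n)\big| \leq \int_0^1 |W(-\alpha)|\cdot \big|(\text{pointwise error at }\alpha)\big|\,\d\alpha$.
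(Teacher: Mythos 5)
Your opening moves match the paper exactly: Plancherel gives $\sum_n w_n f(n)=\int_0^1 W(\alpha)R_f(\alpha,x)\,\d\alpha$, the split $R_f=M_f+E_f$ from Lemma \ref{ArcEstimates}, and the $L^\infty$--$L^1$ bound $\|W\|_1\sup_\alpha|E_f(\alpha)|\ll\|W\|_1\,x/(\log x)^{\tau/2+o(1)}$ for the error. Where you diverge is the main term: your primary plan is to insert the explicit formula for $M_f$ and ``run Theorem \ref{thm1} in reverse,'' expanding the Gauss sums and $\kappa$-coefficients to reconstruct $\sum_n w_n F_s(n)$ directly. This is in principle doable but is considerably harder bookkeeping than what the paper does, and you do not carry it out. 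The paper avoids the reverse computation entirely by a symmetry trick: it writes $\int_0^1 W(\alpha)M_f(\alpha)\,\d\alpha=\sigma(f)\,S_{f_*}(x)$ with $\sigma(f)$ a sum over the major arcs, \emph{observes that the only $f$-dependent piece of $\sigma(f)$ is $\kappa(q/r)$, which depends on $f(p)$ only for $p\mid q\leq x/Q\leq z$} --- so $\sigma(f)=\sigma(F_s)$ since $f$ and $F_s$ agree on primes $\leq z$ and have the same $\psi$ and $t$ by construction --- factors $S_{f_*}(x)=\mu((F_s)_*,x)\mu(F_\ell,x)\,x+O(\cdot)$ by \eqref{struceq}, and then \emph{applies the same Plancherel argument again with $F_s$ in place of $f$} to get $\sum_n w_n F_s(n)=\sigma(F_s)\mu((F_s)_*,x)\,x+O(\cdot)$. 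Comparing the two identities and using $|\mu(F_\ell,x)|\leq 1$ finishes the proof. Your closing remark about proving a pointwise identity for $w_n=\e(n\gamma)$ and integrating is essentially an equivalent formulation of this comparison, and it is the right instinct; but you should state explicitly the observation that makes it work, namely that $\kappa_j$, $\psi_j$ and $t_j$ are \emph{identical} for $f$ and for $F_s$ because they are determined only by the behaviour of $f$ on small primes, so that $M_f(\alpha)$ and $M_{F_s}(\alpha)$ differ only by the ratio $S_{f_*}(x)/S_{(F_s)_*}(x)\sim\mu(F_\ell,x)$. Without that observation the reverse-engineering would be a genuine slog.
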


\begin{proof} By Plancherel's theorem we have 
$$
\sum_{n\leq x} w_n f(n) =  \int_0^1  R_f(\alpha,x) W(\alpha) \d \alpha
$$
and, by \eqref{E-bounds}, this is 
\[
 \int_0^1  M_f(\alpha) W(\alpha) \d \alpha + O\left(  \|W\|_1  \frac {x}{ (\log x)^{\tau /2+o(1)} } \right).
\]
Now $M_f(\alpha)=0$ if $\alpha\in {\mathfrak m}$, and so by Lemma \ref{ArcEstimates}
$$
 \int_0^1  M_f(\alpha) W(\alpha) \d \alpha =  \sigma(f) S_{f_*}(x)  ,
$$
 where $S_{f_*}(x):=\sum_{n\leq x} f_*(n)$ and
\[
\sigma(f)= \sum_{\substack{ q\leq x/Q \\ r|q}} \sum_{(a,q)=1}
 \frac{ \pbar(a)  g(\psi)   \kappa(\tfrac qr)}{\phi(q)}   
\int_{-1/qQ}^{1/qQ}     I(x, \beta,t)     W(a/q+\beta) \d\beta .
\]
Therefore 
\[
\sum_{n\leq x} w_n f(n) = \sigma(f) S_{f_*}(x)   +O\left(  \|W\|_1  \frac {x}{ (\log x)^{\tau /2+o(1)} } \right).
\]
Now \eqref{struceq} and the discussion that follows it, implies that 
\[
S_{f_*}(x) =  \mu(f_*,x) x =  \mu((F_s)_*,x)\mu(F_\ell,x)  x + O\left(    \frac{x}{(\log x)^{\eta+o(1)}}\right) .
\]
 Moreover  the only term  in the summands for $\sigma(f)$  that directly involves values of $f$ is
$\kappa(q/r)$, and this involves only  $f(p)$ for primes $p\leq x/Q$. Now $f(p)=F_s(p)$ for all $p\leq x/Q$ (as $x/Q\leq z$) and, by definition, $\psi_f=\psi_{F_s}$ and $t_f(x,\log x)=t_{F_s}(x,\log x)$.
Therefore 
\[ \sigma(f)=\sigma(F_s), \]
and so 
\[
\sum_{n\leq x} w_n f(n) = \sigma(F_s) \mu((F_s)_*,x)\mu(F_\ell,x)  x +O\left(  \|W\|_1  \frac {x}{ (\log x)^{\tau /2+o(1)} }  \right),
\]
since $|\sigma(f)|\ll  \|W\|_1 $ by definition and $\tau/2<\eta$.
By the same argument we also have 
\[
\sum_n w_n F_s(n)= \sigma(F_s) \mu((F_s)_*,x) x  + O\left(  \|W\|_1  \frac {x}{ (\log x)^{\tau /2+o(1)} }  \right) ,
\]
and   \eqref{sieveDecomp} follows by comparing the last two displayed equations, as
$|\mu(F_\ell,x)|\leq 1$.
\end{proof}

\subsection{Applications}  

\begin{thm}\label{thm2} Let $f\in \mathcal M$ and $z=\log x$.   If $A, B\subset \{ 1,2,\ldots, [x]\}$ with $A+ B\subset \{ 1,2,\ldots, x\}$, then
\begin{align*}
\frac 1 {|A||B|}  \sum_{\colt{a\in A}{b\in B}} f(a+b)   = 
\frac 1x \sum_{n\leq x} F_\ell (n) \frac 1 {|A||B|}  \sum_{\colt{a\in A}{b\in B}} F_s(a+b)
+ O\left(\frac {x }{(|A||B|)^{1/2} (\log x)^{\tau /2+o(1)} } \right) .
\end{align*}
\end{thm}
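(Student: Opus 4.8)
The plan is to obtain this as an essentially immediate corollary of Proposition \ref{wtProp}, applied with the weights taken to be the representation function of $A+B$. First I would set $w_n := \#\{(a,b)\in A\times B : a+b = n\}$ for $n\leq x$; since $A+B\subset\{1,\ldots,x\}$, this gives $\sum_{n\leq x} w_n g(n) = \sum_{a\in A,\, b\in B} g(a+b)$ for any function $g$, in particular for $g=f$ and for $g=F_s$. The exponential sum attached to these weights factors:
\[
W(t) = \sum_{n\leq x} w_n \e(-nt) = \Big(\sum_{a\in A}\e(-at)\Big)\Big(\sum_{b\in B}\e(-bt)\Big) =: S_A(t)\,S_B(t) .
\]

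Next I would estimate the only remaining input that Proposition \ref{wtProp} requires, namely the $L^1$-norm $\|W\|_1 = \int_0^1 |S_A(\alpha) S_B(\alpha)|\,\d\alpha$. By the Cauchy--Schwarz inequality followed by Parseval's identity,
\[
\|W\|_1 \;\leq\; \Big(\int_0^1 |S_A(\alpha)|^2\d\alpha\Big)^{1/2}\Big(\int_0^1 |S_B(\alpha)|^2\d\alpha\Big)^{1/2} \;=\; |A|^{1/2}|B|^{1/2}.
\]
Substituting $w_n$ and this bound into \eqref{sieveDecomp} then yields
\[
\sum_{a\in A,\, b\in B} f(a+b) \;=\; \frac 1x\sum_{n\leq x}F_\ell(n)\cdot\!\!\sum_{a\in A,\, b\in B}\!\! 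F_s(a+b) \;+\; O\Big((|A||B|)^{1/2}\,\frac{x}{(\log x)^{\tau/2+o(1)}}\Big),
\]
and dividing through by $|A||B|$ gives exactly the asserted identity. One should check that the functions $F_s,F_\ell$ and the choice $z=\log x$ used here are the same ones fed into Proposition \ref{wtProp} (the exponent $1$ in $z=\log x$ being admissible in the structure theorem \eqref{struceq}), so that the main terms on the two sides genuinely coincide.

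Because the statement reduces in a single step to Proposition \ref{wtProp}, there is no substantive obstacle at this stage: the analytic work — the major/minor arc splitting of $R_f(\alpha,x)$, the evaluation of the major-arc contribution via Theorem \ref{thm1}, and the $L^2$ control of the error term — has already been packaged into Lemma \ref{ArcEstimates} and Proposition \ref{wtProp}. The only things needing care are the bookkeeping points: that $w_n$ is a legitimate (nonnegative) weight sequence, that the factorisation of $W(t)$ together with Cauchy--Schwarz and Parseval gives the clean bound $\|W\|_1\leq (|A||B|)^{1/2}$, and that all normalisations match so the error term comes out as stated.
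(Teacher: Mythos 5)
Your proof is correct and is essentially the paper's own argument: define the weight sequence via the representation function of $A+B$, observe that $W$ factors as $S_A\cdot S_B$, bound $\|W\|_1$ by Cauchy--Schwarz and Parseval to get $(|A||B|)^{1/2}$, and feed this into Proposition \ref{wtProp}. The only cosmetic difference is that the paper normalises $w_n$ by $|A||B|$ from the outset instead of dividing through at the end, which changes nothing.
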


The effect of the large primes is independent of the particular choice of sets $A$ and $B$. The form of the error term is classic in this context.

 \begin{proof} Let  \[w_n=\frac{1}{|A||B|}\#\{ (a,b)\in A\times  B\,:\, n=a+b\},\] and
proceed as above by noting that
$W(\alpha)=A(\alpha)B(\alpha)/|A||B|$ so that
\begin{align*}\|W\|_1&=
\int_0^1  |W(\alpha)| \d \alpha =
\frac 1{|A||B|} \int_0^1  |A(\alpha)B(\alpha)| \d \alpha 
\\&\leq \frac 1{|A||B|} \left(  \int_0^1  |A(\alpha)|^2 \d \alpha   \int_0^1  |B(\alpha)|^2 \d \alpha  \right)^{1/2} = \frac 1{(|A||B|)^{1/2}} ,
\end{align*}
which, together with \eqref{sieveDecomp},  implies the result.
\end{proof}

\subsection{Explicit Theorem \ref{thm2}:  The mean value of $F_s(a+b)$ over $a\in A,\ b\in B$} 

 \begin{lem}\label{lemcor1}  With the notation as in  Theorem \ref{thm2}, we have
\[
 \frac 1 {|A||B|}  \sum_{\colt{a\in A}{b\in B}} F_s(a+b)  = \sum_{P(m)\leq z}  \kappa(m)  \cdot \frac 1 {|A||B|}
 \sum_{\tolt{a\in A}{b\in B}{ m| a+b}}  \psi\left( \frac{a+b}m\right)   (a+b)^{it}
\]
\end{lem}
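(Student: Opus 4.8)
The plan is to evaluate the mean value of $F_s(a+b)$ directly, exploiting the fact that $F_s$ is completely multiplicative with $F_s(p)=\psi(p)p^{it}$ for every prime $p>z$, so that $F_s$ differs from $\psi(\cdot)(\cdot)^{it}$ only on the primes $p\le z$. First I would write $F_s = \kappa * (\psi \cdot (\cdot)^{it})$ as a Dirichlet convolution, where $\kappa$ is the multiplicative function recording the discrepancy on small primes; concretely, since $F_s(n)/n^{it}$ agrees with $\psi(n)$ except on the $z$-smooth part, $\kappa$ is supported on $z$-smooth integers and is precisely the $\kappa$ appearing in Theorem~\ref{thm1} (with $t=t_1$, $\psi=\psi_1$), which is why the statement is phrased with that $\kappa$. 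This is the only genuinely structural point; it is a routine check that the convolution $\kappa * (\psi\cdot(\cdot)^{it})$ reproduces $F_s$ on prime powers, hence everywhere by multiplicativity.

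With that identity in hand, the second step is purely formal: substitute $F_s(a+b) = \sum_{m \mid a+b} \kappa(m)\, \psi\!\big(\tfrac{a+b}{m}\big)\,\big(\tfrac{a+b}{m}\big)^{it}$ into the double sum over $a\in A$, $b\in B$, and interchange the order of summation to pull the sum over $m$ to the outside. Since $\kappa$ is supported on $z$-smooth integers, the outer sum is exactly $\sum_{P(m)\le z}\kappa(m)$ (here $P(m)$ denotes the largest prime factor of $m$), and for each such $m$ the inner sum is over pairs $(a,b)\in A\times B$ with $m\mid a+b$, weighted by $\psi\!\big(\tfrac{a+b}{m}\big)\big(\tfrac{a+b}{m}\big)^{it}$. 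Dividing by $|A||B|$ throughout gives precisely the claimed formula.

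There is essentially no analytic obstacle here — no error terms, no appeal to Hal\'asz-type estimates — because the statement is an exact identity, not an asymptotic. The one thing to be careful about is convergence/finiteness of the rearrangement: since $A+B\subset\{1,\dots,x\}$, only finitely many $m$ (those with $m\le x$ and $P(m)\le z$) contribute, so the interchange of the finite sums is unconditionally valid and no tail estimate is needed. I would also remark that $\kappa(m)$ may be written explicitly via \eqref{defkappaj}, namely $\kappa(p^b) = f(p^b)/p^{ibt}$ for the primes $p\mid r$ and $\kappa(p^b) = \psi(p^b)(f_*(p^b)-f_*(p^{b-1}))$ otherwise, with $f_*(p)=f(p)\pbar(p)p^{-it}$; but for the statement of the lemma this explicit shape is not needed. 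Thus the proof reduces to: (i) verify $F_s = \kappa * (\psi\cdot(\cdot)^{it})$ on prime powers, and (ii) swap the sums. The mildly delicate point — such as it is — is matching the convolution factor with the $\kappa$ already defined in the paper, so that the lemma dovetails with Proposition~\ref{wtProp} and Theorem~\ref{thm2}.
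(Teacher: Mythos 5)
Your overall plan — write $F_s$ as a Dirichlet convolution supported (on one factor) over $z$-friable integers, substitute into the double sum, and swap the order of summation — is exactly the paper's approach, and the interchange step is indeed trivial. However, the convolution identity you assert is not the right one, and as a consequence your computation would not produce the stated formula.

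Concretely, you claim $F_s = \kappa * \bigl(\psi\cdot(\cdot)^{it}\bigr)$ with $\kappa$ ``the $\kappa$ of Theorem~\ref{thm1}'' and supported on $z$-smooth integers. Neither half of this is right. The $\kappa$ of Theorem~\ref{thm1} is defined by $f(n)/n^{it} = (\kappa*\psi)(n)$ and is \emph{not} supported on $z$-friable integers: for $p>z$ with $p\nmid r$ one has $\kappa(p^b)=\psi(p^b)\bigl(f_*(p^b)-f_*(p^{b-1})\bigr)$, which is generically nonzero. The paper handles this by inserting the friability cutoff explicitly, writing $F_s(n) = n^{it}\bigl((\kappa\chi_z)*\psi\bigr)(n)$ where $\chi_z$ is the indicator of $z$-friable integers — note that the twist $n^{it}$ is pulled out \emph{in front}, not distributed across the convolution factors. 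By contrast, your identity $F_s(n)=\sum_{m\mid n}\kappa(m)\,\psi(n/m)\,(n/m)^{it}$ places $(n/m)^{it}$ inside, which with the paper's $\kappa$ is simply false: checking at a prime power $p^b$ with $p>z$, the right side gives $\sum_{c\leq b}\kappa(p^c)\psi(p^{b-c})p^{i(b-c)t}$, which does not collapse to $\psi(p^b)p^{ibt}$ because the $\kappa(p^c)$ for $c\geq1$ do not vanish.

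Even after truncating $\kappa$ to $z$-friable support, your version differs from the paper's by the factor $m^{it}$ absorbed into $\kappa$. Following your recipe you would arrive at $\sum_{P(m)\leq z}\kappa(m)\sum_{m\mid a+b}\psi\bigl(\tfrac{a+b}{m}\bigr)\bigl(\tfrac{a+b}{m}\bigr)^{it}$, whereas the lemma states $(a+b)^{it}$ in the inner sum, not $\bigl(\tfrac{a+b}{m}\bigr)^{it}$. These agree only if $\kappa$ is replaced by $m^{it}\kappa(m)$, i.e.\ you have silently redefined $\kappa$. The fix is small — pull $n^{it}$ entirely outside the convolution and keep the $\chi_z$ cutoff, so that $F_s(a+b) = (a+b)^{it}\sum_{m\mid a+b,\,P(m)\leq z}\kappa(m)\psi\bigl(\tfrac{a+b}{m}\bigr)$ — but as written your identity fails the very prime-power check you invoke, and the formula you derive is not the one claimed.
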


\begin{proof}  If $\chi_z$ the characteristic function of $z$-friable
integers then we have identity
$$F_s(n)=n^{it}( (\kappa\chi_z)*\psi)(n).$$
We deduce that
\begin{equation}
 \sum_{n } w_n F_s(n) = \sum_{P(m)\leq z } \kappa(m)
 \sum_{{n: \ m|n }} w_n n^{it}  \psi(n/m) , \label{5.2}
\end{equation}
which gives   the result.
\end{proof}

It is also worth observing that by inclusion-exclusion we have 
\[
\frac 1x \sum_{n\leq x} F_\ell(n) =  \prod_{p\leq z}  \frac{(1-f_*(p)/p)}{(1-1/p) } \cdot \frac 1x \sum_{n\leq x} f_*(n)   + O\left(    \frac{1}{(\log x)}\right) .
\]
 Together with Lemma \ref{lemcor1} this allows us to give a more explicit main term in Theorem \ref{thm2}.

\section{Three term products in the circle method}

 \subsection{Proof sketch of the formulas of Theorems \ref{thm3} and \ref{thm4}}
We have
\[
\begin{split}
 \sum_{\colt{\ell ,m ,n \leq x}{a \ell +b m +c n =0 }}   f(\ell ) g(m )h(n )  = &\sum_{\ell ,m ,n \leq x }  f(\ell )g(m )h(n )
\int_0^1 \e(t(a \ell +b m +c n )) \d t
\\  =& \int_0^1 R_f(a t,x) R_g(b t,x) R_h(c t,x) \d t,
\end{split}
\]
and, similarly,
\begin{equation}  \sum_{\colt{\ell,m,n\geq 1}{ \ell+m+n=N }}
 f(\ell) g(m)h(n)  = \int_0^1 \e(-N t) R_f(t) R_g(t) R_h(t) \d t   \label{6.4}
\end{equation}
We decompose the integrands using the identity
\begin{align*}
R_f(\alpha,x)& R_g(\beta,x) R_h(\gamma,x) -M_f(\alpha) M_g(\beta) M_h(\gamma)\\
&= R_f(\alpha,x) R_g(\beta,x) E_h(\gamma)  + R_f(\alpha,x) E_g(\beta) M_h(\gamma) + E_f(\alpha) M_g(\beta) M_h(\gamma) .
\end{align*}
For any functions $A(\alpha)$ and  $B(\alpha)$ we have
\begin{align*}
\left| \int_0^1 A(\alpha)B(\alpha)E(\alpha) \d \alpha \right| 
&\leq  \max_\alpha |E(\alpha) | \int_0^1 |A(\alpha)B(\alpha)| \d \alpha \\
&\leq  \max_\alpha |E(\alpha) |   \int_0^1 \big(|A(\alpha)|^2 + |B(\alpha)|^2\big) \d \alpha   
\end{align*}
Therefore by the estimates of   Lemma \ref{ArcEstimates} we deduce that each of
\[
  \int_0^1 R_f(a t,x)  R_g(b t,x) E_h(c t) \d t , \ \int_0^1 R_f(a t,x)  E_g(b t) M_h(c t) \d t\]  \text{and } \[\int_0^1 E_f(a t)  M_g(b t) M_h(c t) \d t 
  \]
are $\ll  {x^2}/{ (\log x)^{\tau /2+o(1)} }$. Therefore
\[
 \sum_{\colt{\ell ,m ,n \leq x}{a \ell +b m +c n =0 }}   f(\ell ) g(m )h(n )   =\int_0^1 M_f(a t) M_g(b t) M_h(c t) \d t + O\left(    \frac {x^2}{ (\log x)^{\tau /2+o(1)} }   \right) , 
\]
and
\[
\sum_{\colt{\ell,m,n\geq 1}{ \ell+m+n=N }}
 f(\ell) g(m)h(n)  = \int_0^1 \e(-N t) M_f(t) M_g(t) M_h(t) \d t + O\left(    \frac {x^2}{ (\log x)^{\tau /2+o(1)} }   \right).
\]

From here we imitate the proof of Proposition \ref{wtProp}: First we replace $M_f,M_g$ and $M_h$ by the relevant expressions given in Lemma \ref{ArcEstimates} and the main term becomes an integral over the major arcs.
The  main term  is the product of the mean values of 
$f_*,g_*$ and $h_*$, times an expression that  depends only  on the values of $f(p), g(p)$ and $h(p)$ with $p\leq z$. Therefore if we compare this with the same calculation, but now working with $F_s, G_s, H_s$ we discover that the (complicated) main term is the same in each case, other than now we have the 
product of the mean values of $(F_s)_*,(G_s)_*$ and $(H_s)_*$. We can compensate for this by multiplying through by the product of the mean values of $(F_\ell)_*=F_\ell,(G_\ell)_*=G_\ell$ and $(H_\ell)_*=H_\ell$.
We deduce the formulas in the first parts of Theorems  \ref{thm3} and \ref{thm4}.
\qed

  Matthiesen \cite{Ma20}  proved weaker but more general results of this type using very different methods.

\section{Explicit evaluation of the mean value of $F_s(\ell)G_s(m)H_s(n)$}

In this section we complete  the proofs of Theorem \ref{thm3} and \ref{thm4}.

\subsection{The mean value of $F_s(\ell)G_s(m)H_s(n)$ over solutions to $a\ell+bm+cn=0$.}    \label{EvalCsts}
In this subsection we will sketch a proof that 
\begin{equation}  \label{FsGsHssum}
\frac 1{x^2/2} \sum_{\colt{\ell ,m ,n \leq x}{ a \ell +b m +c n =0 }}   F_s(\ell) G_s(m)H_s(n) 
= 2{\cal E}(\infty)  x^{it} \ \delta_{f,g,h} \ \prod_{p\leq z} {\cal E}(p) + O\bigg( \frac 1{\log x} \bigg)
\end{equation} 
where $t=t_f+t_g+t_h$,  the factor $\delta_{f,g,h}=1$ if $\psi_f\psi_g\psi_h$ is principal, and $0$ otherwise,
$$
{\cal E}(\infty)=  \frac 1{|c|}  \int_{\colt{0\leq u,v,w \leq 1}{au+bv+cw=0}}  u^{it_f}
v^{it_g}  w^{it_h}\ \d u \d v,
$$
 the  ${\cal E}(p)$ are  appropriate local factors for each
prime $p\leq z$.

The main idea is to replace each $F_s(\ell)$ (as well as $G_s(m)$ and $H_s(n)$) by a periodic function times
$\ell^{it_f}$, and so evaluate them separately. Let $e_p$ be the smallest integer for which $p^{e_p}>z^2$, and
$N=r_fr_gr_h \prod_{p\leq z} p^{e_p}$.

If $v$ is the largest $z$-friable divisor of $n$ then $F_s(n)=f_*(v)\psi(n)n^{it}$.
Define 
$f_\dag(p^k):=f_*(p^{\min\{ k,e_p\} })\psi(p^k)$ for all $p\leq z$, and $f_\dag(p^k):=\psi(p^k)$ if $p>z$,
so that $F_s(n)=f_\dag(n)n^{it}$ provided $v$ divides $N$. The number of $n\leq x$ for which $F_s(n)\ne f_\dag(n)n^{it}$ is
\[
\leq \sum_{p\leq z} \frac x{p^{e_p}} < \frac{x\pi(z)}{z^2} <\frac xz.
\]
Therefore the accumulated error term in our sum by this change is $\ll x^2/z$ which is acceptable.

Now if $u\equiv n \pmod N$ then $\psi(n)=\psi(u)$ as $r_\psi$ divides $N$ and if $f_\dag=f_\dag^*\psi_f$ then
$f_\dag^*$ only depends on the primes dividing $N$ and so
$f_\dag^*(n)=f_\dag^*((n,N))=f_\dag^*((u,N))=f_\dag^*(u)$. Therefore each $f_\dag(n)= f_\dag(u)$ and so the sum in \eqref{FsGsHssum} is equal to
\[
 \sum_{\substack{u,v,w \mod N  \\  a u +b v +c w \equiv 0 \mod N}}  f_\dag(u) g_\dag(v) h_\dag(w)
  \sum_{\substack{\ell ,m ,n \leq x  \\  a \ell +b m +c n =0   \\  \ell \equiv u,\ m\equiv v,\ n\equiv w \mod N}}  
  \ell^{it_f} m^{it_g} n^{it_h} +O\bigg( \frac {x^2}{\log x} \bigg).
\]
Since $a,b,c$ are pairwise coprime, we see that the inner sums are all non-empty.
Given one term $\ell_0,m_0,n_0$ in the inner sum, the others all look like
$\ell_0+\alpha N, m_0+\beta N, n_0+\gamma N$ where $a\alpha+b\beta+c\gamma=0$.
We deduce that the outer sum is $(x/N)^2 {\cal E}(\infty)  x^{it}(1+O(1/\log x))$.
 Therefore the left-hand side of \eqref{FsGsHssum}  equals
 \[
2{\cal E}(\infty)  x^{it} \cdot \frac 1{N^2} \sum_{\substack{u,v,w \mod N  \\  a u +b v +c w \equiv 0 \mod N}}  f_\dag(u) g_\dag(v) h_\dag(w)   +O\bigg( \frac 1{\log x} \bigg).
\]
The sum remains unchanged if we multiply $u,v$ and $w$ through by any reduced residue $t\pmod N$.  This changes each summand by a factor $(\psi_f\psi_g\psi_h)(t)$. This implies that either the sum is $0$, or 
$(\psi_f\psi_g\psi_h)(t)=1$ for all such $t$, in which case
$\psi_f\psi_g\psi_h$ is principal.

The inner sum can be made into an Euler product by the Chinese Remainder Theorem, and therefore we obtain
\eqref{FsGsHssum}. To be more precise about the Euler factors ${\cal E}(p)$ in the case that $p\nmid abcr_fr_gr_h$:
\begin{align*}
{\cal E}(p) &= \frac 1{p^{2e}} \sum_{\substack{u,v,w \pmod {p^{e}}  \\  a u +b v +c w \equiv 0 \pmod {p^{e}} }}  f_\dag((u,p^e)) g_\dag((v,p^e)) h_\dag((w,p^e)) \\
&= \frac 1{p^{2}}  (p^2-3p+2 +(p-1) ( f_\dag(p) +g_\dag(p) +h_\dag(p) )+O(1))\\
&= 1 - \frac{1-f_\dag(p)}p - \frac{1-g_\dag(p)}p- \frac{1-h_\dag(p)}p+ O\bigg( \frac 1 {p^2} \bigg) 
\end{align*}
since the probability that $p^2$ divides one of $u,v,w$, or $p$ divides two of $u,v,w$ is $O(1/p^2)$.
We deduce that 
\[
\prod_{p\leq z} {\cal E}(p) \asymp \exp\left( -\sum_{p\leq z} \frac{1-f_\dag(p)}p - \sum_{p\leq z} \frac{1-g_\dag(p)}p- \sum_{p\leq z} \frac{1-h_\dag(p)}p \right) .
\]
Now letting $z\to \infty$ we see that if this converges then $\psi_f,\psi_g,\psi_h$ are each principal, and so $\psi_f=\psi_g=\psi_h=1$ since they are all primitive characters.
This completes  the proof of Theorem \ref{thm3}.

The exactly analogous argument completes  the proof of Theorem   \ref{thm4}.

\subsection{When $f,g$ and $h$ are real-valued in Theorems \ref{thm3} and \ref{thm4} } \label{Exp4}

If we have a non-zero mean value in Theorem \ref{thm3} or   \ref{thm4} then $\psi_f=\psi_g=\psi_h=1$ so that 
if $p\leq z$ then $f_\dag(p)=f_*(p)=f(p)p^{-it_f}$. Therefore if $f$ is real-valued and $\prod_{p\leq z} {\cal E}(p) \gg 1$ then $\sum_{p\leq z} \tfrac{1-f_\dag(p)}p\ll 1$ and so $t_f=0$ (and similarly $t_g=t_h=0$).
Thus  
\[
{\cal E}(\infty)  x^{it} =  \frac 1{|c|}  \int_{\colt{0\leq u,v,w \leq 1}{au+bv+cw=0}}    \d u\, \d v,
\]
 in Theorem \ref{thm3} (which equals $\tfrac 12$ if $a=b=-c=1$),
and $2{\cal E}(\infty)  x^{it} =  1$ in Theorem \ref{thm4}.

When $a=b=1, c=-1$ the Euler product term is
\begin{align*}
{\cal E}(p)&=\lim_{e\to \infty} \frac 1{p^{2e}} \sum_{u+v\equiv w \mod {p^e}}  f((u,p^e)) g((v,p^e)) h((w,p^e)) \\
&=:{\cal E}^*(p)\cdot  \left( 1 -\frac{1}{p} \right)^3  \Big( 1 - \frac{f (p)}p  \Big)^{-1} \Big( 1 - \frac{g (p)}p  \Big)^{-1} \cdot 
   \Big( 1 - \frac{h(p)}p  \Big)^{-1},
\end{align*}
 say. Now $F_s=f^{(s)}$ and  $F_\ell=f^{(\ell)}$, and 
\[
\sum_{n\leq x} f_s(n) \sim \prod_{p\leq z}   \left( 1 -\frac{1}{p} \right)  \Big( 1 - \frac{f (p)}p  \Big)^{-1} \cdot x,
\]
by inclusion-exclusion, so that by using \eqref{struceq} we have 
\[
 \prod_{p\leq z}   \left( 1 -\frac{1}{p} \right)  \Big( 1 - \frac{f (p)}p  \Big)^{-1} \sum_{n\leq x} F_\ell(n) \sim
 \sum_{n\leq x} f(n) .
\]
 Substituting this all back into Theorem \ref{thm3} we obtain
 \begin{align}\label{NeedLater}
  \frac 1{x^2/2} \sum_{\colt{\ell,m,n\leq x}{ \ell+ m = n}}  f(\ell)g(m)h(n)= 
 \mu(f,x) \mu(g,x) \mu(h,x)    \prod_{\substack{p\leq z }}  {\cal E}^*(p)
+o(1)     . 
\end{align} 

If say $h(p)=1$ then
\begin{align*}
{\cal E}(p)&=\lim_{e\to \infty} \frac 1{p^{e}} \sum_{u  \pmod {p^e}}  f((u,p^e)) \cdot \frac 1{p^{e}} \sum_{u  \pmod {p^e}} g((v,p^e)) \\ & =  \left( 1 -\frac{1}{p} \right)^2  \Big( 1 - \frac{f (p)}p  \Big)^{-1}   \Big( 1 - \frac{g (p)}p  \Big)^{-1},
\end{align*}
and so ${\cal E}^*(p)=1$. Therefore if ${\cal E}^*(p)\ne 1$ then $f(p),g(p),h(p)\ne 1$.


If  $ f(p)=g(p)=h(p)=-1$ then 
 \[
{\cal E}^*(p):=\bigg( 1 - \frac{  8}{(p-1)^2} \Big( 1 +\frac{   1 }{p^2} \Big)^{-1}   \bigg) .
\]
Therefore, when $f,$ $g,$ $h$ take only $\in\{-1,1\}$, the right-hand side of \eqref{NeedLater} becomes
\[
\mu(f,x) \mu(g,x) \mu(h,x)    \prod_{\substack{p\leq z\\ f(p)=g(p)=h(p)=-1}}  \Big( 1 - \frac{  8p^2}{(p-1)^2(p^2+1)} \Big)
+o(1)     . 
\]
\noindent{\it Remark. }  This is $\sim \mu(f,x) \mu(g,x) \mu(h,x) $    if and only if   $ \mu(f,x) \mu(g,x) \mu(h,x)\gg 1$ and $P:=\{ p\leq z:\  f(p)=g(p)=h(p)=-1\} = \emptyset$: To see this suppose $P\ne \emptyset$.
If odd $p\in P$ then $|{\cal E}^*(p)|<1$ and so $\prod_{p\in P}|{\cal E}^*(p)| <1$ unless $2\in P$. But then
$\prod_{p\in P}|{\cal E}^*(p)| \geq   \prod_{p } |1 - \tfrac{  8p^2}{(p-1)^2(p^2+1)} | = 1.322\ldots>1$, 
\smallskip

If $f,g,h$ only take values $0$ or $1$ (with $a=b=1, c=-1$) then
 \begin{align*}
 \frac 1{x^2/2} \sum_{\colt{\ell,m,n\leq x}{ \ell+ m = n}}  f(\ell)g(m)h(n)= 
  \mu(f,x) \mu(g,x) \mu(h,x)     \prod_{\substack{p\leq z\\ f(p)=g(p)=h(p)=0}}  \Big( 1 - \frac{  1}{(p-1)^2 } \Big) 
+o(1),
\end{align*} 
from which we immediately deduce \eqref{ABC1}.

We can proceed analogously in  Theorem \ref{thm4}, obtaining
\[
  \frac 1{x^2/2} \sum_{\colt{\ell,m,n\leq x}{ \ell+ m + n=N}}  f(\ell)g(m)h(n)= 
 \mu(f,x) \mu(g,x) \mu(h,x)    \prod_{\substack{p\leq z }}  {\cal E}^*_N(p),
+o(1)     
\]
for appropriate Euler factors ${\cal E}^*_N(p)$, and again we note that ${\cal E}^*_N(p)=1$
if $f(p),g(p)$ or $h(p)=1$. If $f(p)=g(p)=h(p)=0$ then 
\[
{\cal E}^*_N(p)= \left( 1 -\frac{1}{p} \right)^{-3} \frac 1{p^2} \sum_{\substack{ u+v+w\equiv N \pmod {p} \\ (uvw,p)=1}}  1 =\begin{cases}
1+\tfrac 1{(p-1)^3} & \text{ if } p\nmid N;\\
1-\tfrac 1{(p-1)^2} & \text{ if } p| N,
\end{cases}
\]
which implies \eqref{ABC2}. If $f(p)=g(p)=h(p)=-1$ then one can similarly calculate a formula for ${\cal E}^*_N(p)$
but it appears to be complicated.

\subsection{Sketch of proof of Corollary \ref{cor2}}  The density of solutions to
$f(a)=g(b)=h(c)=-1$ with $a+b=c\leq x$ is  
\begin{equation}
\frac 1{x^2/2} \sum_{\colt{a,b,c\leq x}{a+b=c }} \tfrac 18           
(1-f(a))(1-g(b))(1-h(c)). \label{7.2}
\end{equation}
When we expand this we have the sum of eight mean values and we can apply \eqref{NeedLater} to them all,
noting that ${\cal E}^*(p)=1$ except in the term with the product $f(a) g(b) h(c)$. Therefore   if the mean values of $f,g$ and $h$ are $\delta_f,\delta_g$ and $\delta_h$, respectively, then  \eqref{7.2} is
\begin{equation}
  \tfrac 18 \left(    (1-\delta_f)  (1-\delta_g) (1-\delta_h)+ (1-C_{\cal P} )\delta_f\delta_g\delta_h\right) +o(1) \, \label{7.4}
\end{equation}
where
\[
 C_{\cal P}:= \prod_{p\in \cal P}\Big( 1 - \frac{  8p^2}{(p-1)^2(p^2+1)} \Big)    
 \]
 with $\cal P:=\{ p: f(p)=g(p)=h(p)=-1\}$, by \eqref{NeedLater}. We need to maximize  the expression in \eqref{7.4}: 

By   the main result of  \cite{GS01}, we have $\delta_f,\delta_g,\delta_h \in [-\delta_0\alpha_{\cal P},\alpha_{\cal P}]$ where
$ \alpha_{\cal P}= \prod_{p\in \cal P}  \frac{p-1}{p+1}$. 

 Now $|(1-C_{\cal P} )\delta_1\delta_2\delta_3|\leq |1-C_{\cal P} | \alpha_{\cal P}^3\leq\tfrac{32}{135}$ which is attained with $\cal P=\{ 2\}$.
 If \eqref{7.4} is $>\kappa$ then $\min(1-\delta_f,1-\delta_g,1-\delta_h)>1.57$, so each of $\delta_f,\delta_g,\delta_h$ are negative. We can then show that the expression in \eqref{7.4}
 is maximized, (considering the cases where $1-C_{\cal P}$ is positive or negative separately)  when 
 $\delta_f=\delta_g=\delta_h=-\delta_0$.

 For the second part of Corollary  \ref{cor2}, we must minimize
 \begin{equation}
  \tfrac 18 \left(    (1+\delta_f)  (1+\delta_g) (1+\delta_h)+ ( C_{\cal P}-1 )\delta_f\delta_g\delta_h\right) +o(1) . \label{5.15}
\end{equation}
In the smallest solution there cannot be two positive $\delta$'s, else we just replace them both by their negative.
If just one $\delta$ is positive then $( C_{\cal P} -1)\delta_1\delta_2\delta_3<0$, else replacing $\delta_i$ by $-\delta_i$ makes both terms smaller.
But then $C_{\cal P}<1$ and thus we optimize when $\cal P=\{ 2\}$. Therefore the quantity inside the brackets in  \eqref{5.15} is 
$\geq (1-\tfrac 13\delta_0 )^2 - \tfrac{32}{5\cdot 3^3}\delta_0^2>\tfrac 12> (1- \delta_0)^3$. Hence all the $\delta$'s must be negative, and  if \eqref{5.15} is 
$>\tfrac 18(1- \delta_0)^3 $ then $C_{\cal P}>1$. But then $2,3\in C_{\cal P}$, and so  the quantity inside the brackets in \eqref{5.15} is 
$\geq     (1-\delta_0/6)^3 - \tfrac{83}{25\cdot6^3}\delta_0^3 >0.7> (1- \delta_0)^3$.  Therefore the minimum occurs when $\delta_1=\delta_2=\delta_3=-\delta_0$. \qed

\medskip

\subsection*{Further calculations} 
Oleksiy Klurman asked about the minimum and maximum proportion of solutions to $a+b=c\leq x$ with 
$f(a)=\epsilon_1, f(b)=\epsilon_2,f(c)=\epsilon_3$ for given $\epsilon_1,\epsilon_2,\epsilon_3\in \{ -1,1\}$.
We solved this above when the $\epsilon_i$'s are all equal; now
we answer the remaining questions but suppress the details of the calculations, particularly the optimizations which we have seen are complicated and not very enlightening.

If $f(n)=1$ for all $n\geq 1$ then we get a proportion $0$ whenever some $\epsilon_j=-1$. Combining this observation with Corollary \ref{cor2},
Klurman's question reduces to asking for the maximum proportion when  the $\epsilon_i$ are not all equal. The formulas above imply that the proportion
of  $a+b=c\leq x$ with $f(a)=\epsilon_f, g(b)=\epsilon_g, h(c)=\epsilon_h$ where  $\epsilon_f,  \epsilon_g,  \epsilon_h\in \{ -1,1\}$ is
\[
  \tfrac 18 \left(    (1+\epsilon_f\alpha_{\cal P}\lambda_f)  (1+ \epsilon_g\alpha_{\cal P}\lambda_g) (1+ \epsilon_h\alpha_{\cal P}\lambda_h)-  (1-C_{\cal P}) \epsilon_f \epsilon_g  \epsilon_h\alpha_{\cal P}^3 \lambda_f\lambda_g\lambda_h\right) 
\]
where $\delta_f=\alpha_{\cal P}\lambda_f, \delta_g=\alpha_{\cal P}\lambda_g, \delta_h=\alpha_{\cal P}\lambda_h$ and each
$\lambda_f,\lambda_g,\lambda_h\in [-\delta_0,1]$. Since the expression is linear in each $\lambda_*$,   the optimal value is taken when each 
$\lambda_*=-\delta_0$ or $1$. 
Since the above expression is perfectly symmetric we can take $\epsilon_g=\epsilon_f=-\epsilon_h=\epsilon \in \{ -1,1\}$ to get
\[
  \tfrac 18 \left(    (1+\epsilon\alpha_{\cal P}\lambda_f)  (1+ \epsilon\alpha_{\cal P}\lambda_g) (1- \epsilon\alpha_{\cal P}\lambda_h)+  (1-C_{\cal P})    \epsilon\alpha_{\cal P}^3 \lambda_f\lambda_g\lambda_h\right) 
\]
The maximum occurs with each $\epsilon_*\lambda_*>0$, and then with $\cal P=\emptyset$.
Thus if $\epsilon =1$ then the maximum is $ \tfrac 12 (1+\delta_0)$, and if $\epsilon =-1$ then the maximum is $ \tfrac 14 (1+\delta_0)^2$.
To summarize, $( \tfrac 12 (1+\delta_0))^\mu$ is the asymptotically maximum proportion of solutions to $a+b=c\leq x$ with 
$f(a)=\epsilon_1, f(b)=\epsilon_2,f(c)=\epsilon_3$, where $\mu:=\#\{ i: \epsilon_i=-1\}$.

Finally   if $f=g=h$ then we need to find the maximum of
\[
  \tfrac 18 \left(    1+\epsilon\alpha_{\cal P}\lambda -  \alpha_{\cal P}^2\lambda^2 -C_{\cal P}   \epsilon\alpha_{\cal P}^3 \lambda^3\right) 
\]
Since $\alpha_{\cal P}^2, |C_{\cal P}|\alpha_{\cal P}^2\leq 1$ we write the above $1 -  \alpha_{\cal P}^2\lambda^2 +\epsilon\alpha_{\cal P}\lambda(1-C_{\cal P}    \alpha_{\cal P}^2 \lambda^2)$, and this is maximized when $\lambda$ has the same sign as $\epsilon$. Thus let $t=\epsilon \lambda$, so we need to maximize
\[ \tfrac 18 \left( 1+ \alpha_{\cal P}t -  \alpha_{\cal P}^2t^2 -C_{\cal P}  \alpha_{\cal P}^3t^3\right)  \]
if $\epsilon=1$  for $\lambda=t\in [0,1]$ and 
if $\epsilon=-1$  for $\lambda=-t$ with $t\in [0,\delta_0]$.

In the first case the overall maximum occurs with $t=1$ and $\cal P=\{ 2\}$. This means that the asymptotically maximum proportion of solutions to $a+b=c\leq x$ with 
$f(a)=\epsilon_1, f(b)=\epsilon_2,f(c)=\epsilon_3$ where two $\epsilon_i$'s equal 1, the other $-1$ is given by the example
of the completely multiplicative function $f$ with$f(p)=1$ except if $p=2$, yielding a proportion $\frac 8{45}=.17777\dots$.

In the second case the overall maximum occurs with $t=\delta_0$ and $\cal P=\{ 3\}$. This means that the asymptotically maximum proportion of solutions to $a+b=c\leq x$ with 
$f(a)=\epsilon_1, f(b)=\epsilon_2,f(c)=\epsilon_3$ where two $\epsilon_i$'s equal $-1$, the other $1$ is given by the example
of the completely multiplicative function $f$ with   $f(p)=1$ except if $p=3$ or $p>x^{1/(1+\sqrt{e})}$,  yielding a proportion $.15611\dots$.

\section{Binary additive problems}

The granddaddy of all additive problems in number theory is, of course, the Goldbach problem.
Typically any problem involving two variables linked by a linear equation is considered very difficult to deal with.
However Br\"udern \cite{B08} has developed a method that works in many situations provided
certain hypotheses are fulfilled (see also Theorem 1.9 in \cite{Klu} which uses techniques related to those used here).  We shall now investigate Br\"udern's idea in our
context. As usual we evaluate
\begin{equation}
\sum_{a+b=N}  f(a) g(b) = \int_0^1  \e(-\alpha N) R_f(\alpha ,N) R_g(\alpha ,N) \d \alpha
\label{9.1}
\end{equation}
where $a,b$ are positive integers, and $f$ and $g$ are multiplicative functions taking values
inside or on the unit circle, using the circle method.  The hope is that the main part of
the integral on the right side lies on the major arcs, and the contribution on the minor
arcs is easily proved to be negligible (that is $o(N)$).  Let ${\mathfrak M}\cup {\mathfrak m}$
be the partition of  $[0,1)$ into major and minor arcs, respectively. Then, by the
Cauchy-Schwarz inequality, we have
\begin{align*}
\left| \int_{{\mathfrak m} }  \e(-\alpha N) R_f(\alpha ,N) R_g(\alpha ,N) \d \alpha \right|^2
&\leq
 \int_{\mathfrak m}  |R_f(\alpha ,N)|^2 \d \alpha \cdot \int_{\mathfrak m} |R_g(\alpha ,N)|^2 \d \alpha
\\ & \leq  N  \int_{\mathfrak m}  |R_f(\alpha ,N)|^2 \d \alpha
\end{align*}
since $\int_0^1 |R_g(\alpha ,N)|^2 \d \alpha = \sum_{n\leq N} |g(n)|^2$. Hence the contribution of the
minor arcs in \eqref{9.1} is $o(N)$ if $ \int_{\mathfrak m}  |R_f(\alpha ,N)|^2 \d
\alpha=o(N)$.

\begin{prop}\label{prop7}  Suppose that $f\in \mathcal M$ and $|f(n)|=1$ for all integers $n\geq 1$. Then we have 
$ \int_{\mathfrak m}  |R_f(\alpha ,x)|^2 \d \alpha=o(x)$ if and only if  $\int_{\mathfrak M}  |M_f(\alpha)|^2 \d \alpha=x+o(x)$. We always have $\int_{\mathfrak M}  |M_f(\alpha)|^2 \d \alpha\leq x+o(x)$. Moreover
 $\int_{\mathfrak M}  |M_f(\alpha)|^2 \d \alpha= x+o(x)$ if and only if
\begin{equation} 
\sum_{\substack{(x/Q)^\varepsilon <p\leq x }}  \frac{1-{\Re e}(f(p)\pbar(p) p^{-it})}{p} =o(1) .   \label{9.5}
\end{equation}
 \end{prop}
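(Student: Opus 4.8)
The plan is to reduce everything to the behaviour of $M_f(\alpha)$ on the major arcs via Parseval, exactly as in Lemma~\ref{ArcEstimates}. Since $|f(n)|=1$ gives $\int_0^1 |R_f(\alpha,x)|^2\d\alpha = \sum_{n\leq x}|f(n)|^2 = x+O(1)$, and since $R_f = M_f + E_f$ with $M_f$ supported on $\mathfrak{M}$, we have
\[
\int_{\mathfrak m}|R_f(\alpha,x)|^2\d\alpha = x - \int_{\mathfrak M}|M_f(\alpha)|^2\d\alpha + O\!\Big(\int_0^1 |E_f(\alpha,x)|^2\d\alpha\Big) + O\!\Big(\int_{\mathfrak M}|R_f||E_f|\Big),
\]
and the last two error integrals are $o(x)$ by the $L^2$ bounds on $E_f$ in Lemma~\ref{ArcEstimates} together with Cauchy--Schwarz. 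This immediately gives the first equivalence and the remark that $\int_{\mathfrak M}|M_f|^2 \leq x+o(x)$ always.

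Next I would compute $\int_{\mathfrak M}|M_f(\alpha)|^2\d\alpha$ explicitly. On the arc around $a/q$ (with $q\leq x/Q$, $r=r_1\mid q$) we have
\[
M_f(a/q+\beta) = \frac{\pbar(a)g(\psi)\,\kappa(q/r)}{\phi(q)}\,I(x,\beta,t)\,S_{f_*}(x).
\]
Summing $|M_f|^2$ over $a$ coprime to $q$ contributes a factor $\phi(q)$; the Gauss sum gives $|g(\psi)|^2 = r$; and integrating $|I(x,\beta,t)|^2$ over $|\beta|\leq 1/qQ$ gives, by \eqref{eq: Iintegralbd} with $\Delta = x/qQ = x/(qQ)$ (which is $\geq (\log x)^{\tau-\varepsilon}/q$, hence large since $q\leq x/Q$ is small), the value $\tfrac1x(1+o(1))$. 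Collecting,
\[
\int_{\mathfrak M}|M_f(\alpha)|^2\d\alpha = \frac{|S_{f_*}(x)|^2}{x}\Big(\sum_{\substack{q\leq x/Q\\ r\mid q}}\frac{r\,|\kappa(q/r)|^2}{\phi(q)}\Big)(1+o(1)).
\]
Here $|S_{f_*}(x)|\leq x$ and the sum over $q$ telescopes into an Euler product; one checks that the full product equals $1$, so the right side is $\leq x+o(x)$ with equality iff both $|S_{f_*}(x)| = x+o(x)$ \emph{and} the truncated $q$-sum captures essentially all of the product. The truncation at $q\leq x/Q$ loses only $o(1)$ because the missing Euler factors involve primes $p > x/Q = (x/Q)$, hmm---more precisely the tail of the product over primes dividing $q$ with $q > x/Q$ is $1+o(1)$ since those are ``large'' and $f_* = f\pbar_1 n^{-it_1}$ is pretentious to $1$ exactly in the regime we are analyzing; I would phrase this as: the $q$-sum equals $\prod_{p}(\text{local factor}) + o(1)$ unconditionally, and the product is $1$.

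So the whole statement reduces to: $\int_{\mathfrak M}|M_f|^2 = x+o(x)$ iff $|S_{f_*}(x)| = x+o(x)$. By Hal\'asz's theorem (in the strong form, e.g. via \eqref{lemsumt} and the structure theorem \eqref{struceq}), $|S_{f_*}(x)| = x+o(x)$ holds iff $f_*$ is $1$-pretentious up to a negligible contribution, i.e. iff $\sum_{p\leq x}\tfrac{1-\Re f_*(p)}{p} = o(1)$ where $f_*(p) = f(p)\pbar_1(p)p^{-it_1}$. Finally the small primes $p\leq (x/Q)^\varepsilon$ contribute $O(1)$ to this sum regardless, and in fact the definition of $t_1,\psi_1$ as maximizers forces the contribution of primes up to $(x/Q)^\varepsilon$ to be automatically bounded; more carefully, $|S_{f_*}(x)|=x+o(x)$ forces the \emph{tail} sum over $(x/Q)^\varepsilon < p \leq x$ to be $o(1)$, which is precisely \eqref{9.5}, and conversely \eqref{9.5} plus the boundedness of the small-prime part gives $|S_{f_*}(x)|=x+o(x)$. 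The main obstacle is making the ``Euler product equals $1$'' and the truncation-at-$x/Q$ steps rigorous and confirming that the small-prime tail genuinely does not affect the equivalence; the pretentious-distance bookkeeping via the triangle inequality for $\mathbb{D}(f_*,1;x)$ is the cleanest way to handle both at once.
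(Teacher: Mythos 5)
Your first half---the Parseval/$L^2$ decomposition $R_f=M_f+E_f$, the Cauchy--Schwarz control of cross terms, and the explicit integration of $|M_f|^2$ over arcs using \eqref{eq: Iintegralbd}---matches the paper's proof and is fine. The genuine gap is in the claim that the $q$-sum ``telescopes into an Euler product; one checks that the full product equals~$1$.'' It does not. The $q$-sum $\sum_{r\mid q\leq x/Q} r|\kappa(q/r)|^2/\phi(q)$ is (up to $1+o(1)$) the truncated product $\prod_{p\leq x/Q}(1-1/p)^{-2}|1-f_*(p)/p|^2$, whose local factor is $1+\tfrac{2(1-\Re f_*(p))}{p}+O(p^{-2})\geq 1$, with equality only when $f_*(p)=1$. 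This product is $\asymp |\mu((f_*)_s,x)|^{-2}$, \emph{not} $1$. A concrete check: take $f(p)=-1$ for $p\in\{2,3\}$ and $f(p)=1$ otherwise, so $\psi_1=1$, $t_1=0$, $f_*=f$; then the product is $\prod_{p\in\{2,3\}}(1+1/p)^2/(1-1/p)^2=36$, while $|S_{f_*}(x)|\sim x/6$. Here \eqref{9.5} holds (the tail primes have $f_*(p)=1$) and indeed $\int_{\mathfrak M}|M_f|^2\sim (1/36)\cdot 36\cdot x=x$, even though $|S_{f_*}(x)|\neq x+o(x)$.

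Consequently your reduction ``$\int_{\mathfrak M}|M_f|^2=x+o(x)$ iff $|S_{f_*}(x)|=x+o(x)$'' proves the wrong statement: $|S_{f_*}(x)|=x+o(x)$ forces $\sum_{p\leq x}(1-\Re f_*(p))/p=o(1)$, which is strictly stronger than \eqref{9.5}. The proposition deliberately leaves the primes $p\leq (x/Q)^\varepsilon$ unconstrained, and your closing remark that ``the small primes contribute $O(1)$ regardless'' actually undercuts the claimed equivalence rather than supporting it---boundedness is not smallness, and if that small-prime sum is bounded away from $0$ then $|S_{f_*}(x)|$ is bounded away from $x$ even though \eqref{9.5} can still hold. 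The missing ingredient is the structure theorem \eqref{struceq}: write $\mu(f_*,x)=\mu((f_*)_s,x)\,\mu((f_*)_\ell,x)+o(1)$, observe that the $q$-sum cancels exactly the $|\mu((f_*)_s,x)|^{-2}$ factor, and conclude that $\int_{\mathfrak M}|M_f|^2\leq |\mu((f_*)_\ell,x)|^2\,x(1+o(1))\leq x(1+o(1))$. Equality then forces $|\mu((f_*)_\ell,x)|\to 1$ (Hal\'asz applied to $(f_*)_\ell$, which equals $1$ on $p\leq z$, gives the tail sum over $p>z=x/Q$) together with near-saturation of the $q$-sum at $q\leq x/Q$, which constrains the range $(x/Q)^\varepsilon<p\leq x/Q$; these two pieces assemble to \eqref{9.5} and nothing more.
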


\begin{proof} During the proof of Lemma \ref{ArcEstimates} we saw that 
\[
\int_{{\mathfrak M}} |E_f(\alpha,x)|^2 \d \alpha =o(x) \text{ and } \int_{{\mathfrak M}} |M_f(\alpha)|^2 \d \alpha \ll x, 
\]
so that  $$\Big|\int_{\mathfrak M}   M_f(\alpha) \overline{E_f(\alpha,x)} \d \alpha \Big|  \leq 
\left( \int_{\mathfrak M}  |M_f(\alpha )|^2 \d \alpha \int_{\mathfrak M}  |E_f(\alpha,x )|^2 \d \alpha \right)^{1/2} =o(x) ,$$ 
and therefore
\begin{align*}
 \int_{\mathfrak M}  |R_f(\alpha,x)|^2 \d \alpha &- \int_{\mathfrak M}  |M_f(\alpha)|^2 \d \alpha 
\cr&\leq 
 2\left| \int_{\mathfrak M}   |M_f(\alpha) \overline{E_f(\alpha,x)} \d \alpha\right| + \int_{\mathfrak M}  |E_f(\alpha,x)|^2 \d \alpha  =o(x).
\end{align*} 
By Parseval we have
\[
 \int_0^1  |R_f(\alpha,x)|^2 \d \alpha = \sum_{n\leq x} |f(n)|^2 = x+O(1),
 \]
 and so 
 \[ \int_{\mathfrak m}  |R_f(\alpha ,x)|^2 \d \alpha=o(x) \text{ if and only if }
  \int_{\mathfrak M}  |M_f(\alpha)|^2 \d \alpha=x+o(x).
  \]
Now, by Lemma  \ref{ArcEstimates}, we have
\[
 \int_{\mathfrak M}  |M_f(\alpha)|^2 \d \alpha =  |\mu(f_*,x)|^2 x^2 \sum_{r|q\leq x/Q}  r  \frac{|\kappa(q/r)|^2}{\phi(q)}  \int_{\beta=-1/qQ}^{1/qQ} | I(x,\beta,t) |^2  \d \beta  .
 \]
 If $q\ll x/Q$ then  $x\int_{\beta=-1/qQ}^{1/qQ}  | I(x,\beta,t) |^2  \d \beta \asymp 1$
 by  \eqref{eq: Iintegralbd}; and it is $1+o(1)$ if $q=o(x/Q)$. Using this and  
 \eqref{struceq} we deduce that 
 \[
 \int_{\mathfrak M}  |M_f(\alpha)|^2 \d \alpha 
\leq  |\mu((f_*)_s,x)|^2 |\mu((f_*)_\ell,x)|^2\, x(1+o(1)) \sum_{r|q\leq x/Q}  r  \frac{|\kappa(q/r)|^2}{\phi(q)} 
 \]
with, say, $z=x/Q$.  Since $|\kappa(m))|= \prod_{p|m} |1-f_*(p)|$ this last sum is 
\[
\leq   r \sum_{\substack{m\geq 1\\ p|m \implies p\leq x/Q}}   \frac{|\kappa(m)|^2}{\phi(rm)}
= \prod_{p\leq x/Q}  \left(1-\frac{1}p\right)^{-2} \left|1-\frac{f_*(p)}p\right|^{2} \sim \frac 1{|\mu((f_*)_s,x)|^2}.
\]
Now since $|\mu((f_*)_\ell,x)|\leq 1$, we deduce that $ \int_{\mathfrak M}  |M_f(\alpha)|^2 \d \alpha
\leq (1+o(1)) x$.
In order to get equality here we must have (asymptotic) equality in each of the last few steps. In particular we must have $|\mu((f_*)_\ell,x)|\sim 1$, and so 
\[
 \sum_{\substack{ x/Q<p\leq x }}  \frac{1-{\Re e}(f_*(p)) }{p} =o(1) 
 \]
by Hal\'asz's Theorem.   In this case we deduce that $ \int_{\mathfrak M}  |M_f(\alpha)|^2 \d \alpha\sim x$ if and only if 
\[
    \sum_{1\leq m= o(x/Qr)}  \frac{r}{\phi(mr)} \prod_{p|m} |1-f_*(p)|^2   \sim \frac 1{|\mu((f_*)_s,x)|^2},
 \]
 since the larger $q=mr$ in the sum are weighted by an $I$-integral that is $<1$. One can show that this happens if and only if the primes $>(x/Q)^\varepsilon$ do not make a significant contribution; that is, 
\[
 \sum_{\substack{(x/Q)^\varepsilon <p\leq x/Q }}  \frac{1-{\Re e}(f_*(p)) }{p} =o(1) 
 \]
\end{proof}

As a corollary we can recover the hypothesis \eqref{Brud} used by  Br\"udern \cite{B08}:

\begin{cor}\label{cortoprop7}  Suppose that $f\in \mathcal M$ and $|f(n)|=1$ for all integers $n\geq 1$. Then we have 
$ \int_{\mathfrak m}  |R_f(\alpha ,x)|^2 \d \alpha=o(x)$ for all sufficiently large $x$  if and only if 
\begin{equation} \label{Brud}
\sum_{\substack{p  }}  \frac{1- {\Re e}(f(p)\pbar(p) p^{-it})}{p} \ll 1  
\end{equation}
\end{cor}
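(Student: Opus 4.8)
The plan is to deduce Corollary~\ref{cortoprop7} from Proposition~\ref{prop7}. Chaining the equivalences there, $\int_{\mathfrak m}|R_f(\alpha,x)|^2\,\d\alpha=o(x)$ holds for a given $x$ if and only if \eqref{9.5} holds, with $\psi=\psi_1$ and $t=t_1=t_{f\pbar_1}(x,\log x)$ the extremal data attached to $x$. Thus it suffices to show that \eqref{9.5} holds for all sufficiently large $x$ if and only if \eqref{Brud} holds. Since each term $1-{\Re e}(f(p)\pbar(p)p^{-it})$ is non-negative, \eqref{Brud} is equivalent to convergence of that series, that is, to $f$ pretending to be $\psi(n)n^{it}$ for some primitive $\psi\pmod r$ and some real $t$.

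Assume first \eqref{Brud}, and set $g(n):=f(n)\pbar(n)n^{-it}$, so $\sum_p|1-g(p)|^2/p<\infty$. Two facts are needed for $x$ large. First, $\psi_1=\psi$ (hence $r_1=r$): the primitive character underlying $\chi_1$ must be $\psi$, since $|S_f(x,\psi)|\gg x$ on the relevant range by Hal\'asz's theorem, while $|S_f(x,\psi')|=o(x)$ for every other primitive $\psi'$ (as $f$ cannot simultaneously pretend to $\psi n^{it}$ and to $\psi'n^{iu}$, two functions at infinite $\mathbb{D}$-distance). Second, and more delicately, $t_1=t+o(1/\log x)$. After the shift $v\mapsto v-t$ this says that the maximum over $|v|\le\log x$ of $\log|F_g(\sigma+iv)|$, $\sigma:=1+\tfrac1{\log x}$, is attained within $o(1/\log x)$ of the origin; its derivative is $\Im\sum_p g(p)(\log p)p^{-\sigma-iv}+O(1)$, whose ``$\zeta$-part'' $\Im\sum_p(\log p)p^{-\sigma-iv}$ behaves like $-v(\log x)^2/\bigl(1+(v\log x)^2\bigr)$ and dominates the perturbation $\Im\sum_p\bigl(g(p)-1\bigr)(\log p)p^{-\sigma-iv}$, which has magnitude $o(\log x)$ (split at a slowly growing cut-off and use Cauchy--Schwarz with $\sum_p|1-g(p)|^2/p<\infty$ and $\sum_p(\log p)^2p^{-\sigma}\asymp(\log x)^2$); a spurious distant maximum is excluded using $\log|F_g(\sigma+iv)|\le\log(1/|v|)+o(\sqrt{\log\log x})$ for $1/\log x\ll|v|\ll1$. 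Granted these, writing $f_*(n)=f(n)\pbar(n)n^{-it_1}$,
\[
\sum_{(x/Q)^\varepsilon<p\le x}\frac{1-{\Re e}\,(f_*(p))}{p}
=\sum_{(x/Q)^\varepsilon<p\le x}\frac{1-{\Re e}\,(g(p))}{p}
+O\bigl((|t_1-t|\log x)^2\bigr)+o(1),
\]
and the right-hand side is $o(1)$: the displayed sum because $(x/Q)^\varepsilon\to\infty$ and the series in \eqref{Brud} converges, the error because $|t_1-t|=o(1/\log x)$, $\sum_{p>(x/Q)^\varepsilon}|1-g(p)|^2/p\to0$ and $\sum_{p\le x}(\log p)^2/p\ll(\log x)^2$. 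This is \eqref{9.5}.

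Conversely, assume \eqref{9.5} for all large $x$. Combined with the trivial bound $\sum_{p\le(x/Q)^\varepsilon}2/p\ll\log\log\log x$ this gives $\mathbb{D}(f,\psi_1n^{it_1};x)^2\ll\log\log\log x$. A stability argument then forces $\psi_1(x)$ and $t_1(x)$ to settle down: were $\psi_1(x_1)\ne\psi_1(x_2)$ for large $x_1<x_2\le x_1^2$, then on the primes $z_2<p\le x_1$ (with $z_i:=(\log x_i)^{\tau-\varepsilon}<x_i$) both $\psi_1(x_1)n^{it_1(x_1)}$ and $\psi_1(x_2)n^{it_1(x_2)}$ would lie within $\mathbb{D}$-distance $O(\sqrt{\log\log\log x_1})$ of $f$, hence of each other, making $\sum_{z_2<p\le x_1}\bigl(1-{\Re e}((\psi_1(x_1)\overline{\psi_1(x_2)})(p)\,p^{i(t_1(x_1)-t_1(x_2))})\bigr)/p$ both $\ll\log\log\log x_1$ and $\gg\log\log x_1$; similarly $|t_1(x_1)-t_1(x_2)|\ll1/\log x_1$. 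Hence $\psi_1(x)\equiv\psi$ and $t_1(x)\to t$ for fixed $\psi,t$. Putting this back into \eqref{9.5} and letting $x\to\infty$, so that the range $\bigl((x/Q)^\varepsilon,x\bigr]$ eventually swallows any fixed prime, shows that the tail of $\sum_p\bigl(1-{\Re e}(f(p)\pbar(p)p^{-it})\bigr)/p$ tends to $0$; as the terms are non-negative, this is \eqref{Brud}.

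The main obstacle is the second fact in the forward direction: the triangle inequality for $\mathbb{D}$ gives only $|t_1-t|=O(1/\log x)$, which is not enough to kill the tail-of-Euler-sum in \eqref{9.5}; one has to exploit the genuine \emph{convergence} of $\sum_p|1-f(p)\pbar(p)p^{-it}|^2/p$ (not merely boundedness of $\mathbb{D}(f,\psi n^{it};x)$) to pin the maximiser of $|F_{f\pbar}(1+\tfrac1{\log x}+it)|$ down to within $o(1/\log x)$ of $t$.
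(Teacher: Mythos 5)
Your overall strategy---reading the Corollary as a consequence of Proposition~\ref{prop7}, and reducing to showing that ``\eqref{9.5} for all large $x$'' is equivalent to \eqref{Brud}---is the natural one and, since the paper gives no separate argument, is presumably the intended route. The forward direction is essentially sound, and you correctly flag the genuinely delicate point: pinning $t_1(x)$ to within $o(1/\log x)$ of the fixed $t$. But the explicit bound you invoke to exclude a distant maximiser, $\log|F_g(\sigma+iv)|\le\log(1/|v|)+o(\sqrt{\log\log x})$, is too weak: compared with the value $\log\log x+O(1)$ at $v=0$ it only forces $|t_1-t|\le e^{o(\sqrt{\log\log x})}/\log x$, after which the derivative criterion does not by itself rule out $|t_1-t|\asymp (\text{slowly growing})/\log x$. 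To close this one must use the sign $1-\Re g(p)\ge0$: writing the perturbation in $\log|F_g|$ as $-\sum_p\tfrac{(1-\Re g(p))\cos(v\log p)}{p^\sigma}+\sum_p\tfrac{\Im g(p)\sin(v\log p)}{p^\sigma}$, the first piece is $O(1)$ outright (it is dominated by the convergent series) and the second is $o\bigl(\sqrt{1+\log(1+|v|\log x)}\bigr)$ by the split-and-Cauchy--Schwarz argument; this bootstraps to $|t_1-t|=O(1/\log x)$, after which your derivative computation indeed yields $o(1/\log x)$.

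The converse direction has a more serious gap. Your closing sentence---``letting $x\to\infty$, so that the range $((x/Q)^\varepsilon,x]$ eventually swallows any fixed prime''---is backwards: as $x\to\infty$ the window $((x/Q)^\varepsilon,x]$ moves \emph{away} from any fixed prime $p_0$, since $(x/Q)^\varepsilon=(\log x)^{\varepsilon(\tau-\varepsilon)}\to\infty$. Thus for a fixed $Y_0$, \eqref{9.5} never directly controls $\sum_{p>Y_0}\frac{1-\Re g(p)}{p}$; one must chain together successive windows $x,\,(\log x)^{c},\,(\log(\log x)^{c})^{c},\dots$, whose number grows (very slowly) with $x$. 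Since \eqref{9.5} only supplies an unquantified $o(1)$ per window and $t_1$ may drift by $\gg 1/\log x$ between windows (your stability argument gives only $|t_1(x_1)-t_1(x_2)|\ll(\log\log x_1)^{O(1)}/\log x_1$, not $\ll1/\log x_1$), the accumulated error over the chain is not obviously bounded. In fact the implication appears to fail as a literal statement: take $f\in\mathcal M$ with $f(p)=-1$ on a set $S$ chosen so that $\sum_{p\in S}1/p=\infty$ yet $\sum_{p\in S,\,(\log x)^{c}<p\le x}1/p\to0$ (possible with $\sum_{p\in S,\,p\le Y}1/p$ growing slower than any iterated logarithm); then $\psi_1=1$ and \eqref{9.5} holds, but \eqref{Brud} fails for every choice of $\psi$ and $t$. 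This suggests the ``only if'' half of the Corollary is stated somewhat loosely and cannot be established along these lines without a reinterpretation of the claim (or of what \eqref{9.5} ``holding for all sufficiently large $x$'' ought to mean).
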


If \eqref{9.5} holds then we can evaluate the main term in
certain binary additive problems, like \eqref{9.1}  to obtain
\begin{equation}
\sum_{a+b=N}  f(a) g(b) = \int_{{\mathfrak M} }   \e(-\alpha N) M_f(\alpha ,N) M_g(\alpha ,N) \d \alpha +o(N)
\end{equation}
or
\begin{equation}
\sum_{n\leq N}  f(n) g(n+1) = \int_{{\mathfrak M} } \e(\alpha) M_f(\alpha ,N) M_g(-\alpha ,N) \d \alpha +o(N)
\end{equation}
using only the major arcs.  
 The actual asymptotics can be obtained without using the circle method, as in \cite{Klu}, Corollary 1.4, so we will not give explicit details here.
 
\goodbreak
\section{Upper bounds on exponential sums} \label{MVBound}
In this section we give explicit upper bounds on $R_f(\alpha,x)$.

\subsection{The conjecture  \eqref{1.2} can only fail in very specific circumstances} We sketch a proof of the following:

\begin{prop} \label{NewBd} If   \eqref{1.2} does not hold then \eqref{qrange} holds.
\end{prop}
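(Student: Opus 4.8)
We prove the contrapositive: if the Dirichlet pair $(q,\beta)$, $\beta=\alpha-a/q$, lies outside the region described by \eqref{qrange}, then \eqref{1.2} holds. Two inputs do the work: Bachmann's bound \eqref{1.1}, which we invoke whenever $\alpha$ has a reduced rational approximation of \emph{intermediate} denominator, and Theorem \ref{thm1}, which supplies the lower bound on $q$. For the upper bound on $q$: since $q\leq Q=x/(\log x)^{2+\varepsilon}$ we have $x/q\geq(\log x)^{2+\varepsilon}$, so $R:=\min\{q,x/q\}\geq\min\{q,(\log x)^{2+\varepsilon}\}$. If $q>(\log x)^2(\log\log x)^{1+\varepsilon}$ then $R\geq(\log x)^2(\log\log x)^{1+\varepsilon}$; as $t\mapsto t^{-1}\log t\log\log t$ is eventually decreasing, the second term of \eqref{1.1} is largest at this endpoint, where $\log R\ll\log\log x$, and it is $\ll x\big((\log\log x)(\log\log\log x)/((\log x)^2(\log\log x)^{1+\varepsilon})\big)^{1/2}\ll x/\log x$, so \eqref{1.2} holds. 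Letting $\varepsilon\to0$, failure of \eqref{1.2} forces $q\leq(\log x)^2(\log\log x)^{1+o(1)}$. The same computation gives the general principle I will reuse: if $\alpha$ admits a reduced approximation $a'/q'$ with $|\alpha-a'/q'|\leq 1/q'^2$ and $q'$ in the window $[(\log x)^2(\log\log x)^{1+\varepsilon},\,x/((\log x)^2(\log\log x)^{1+\varepsilon})]$, then $R_f(\alpha,x)\ll x/\log x$ and \eqref{1.2} holds.

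Next, the bound on $|\beta|$. We may take $a/q=p_k/q_k$ to be a convergent of $\alpha$, so that $|\beta|\asymp 1/(q_kq_{k+1})$. If $|\beta|$ exceeds the threshold $\log q\log\log q/x$ appearing in \eqref{qrange}, this bounds $q_{k+1}$ from above, and one iterates along the convergents: at each step either some denominator lands in the Bachmann window of the previous paragraph (and then \eqref{1.1} ends the argument), or the chain overshoots $Q$, in which case the last convergent with denominator $\leq Q$ is $a/q$ itself and the best-approximation property forces $|\alpha-a/q|\leq 1/(qQ)$, contradicting that $|\beta|$ was large. This step --- extracting from ``$|\beta|$ large'' an approximation with denominator in $[(\log x)^2(\log\log x)^{1+\varepsilon},Q]$, with the threshold for ``large'' matching exactly the $\log q\log\log q/x$ of \eqref{qrange} --- is the main obstacle: it is a continued-fractions bookkeeping that must be run carefully, distinguishing the cases where the chain lands in the window, overshoots $Q$, or must be iterated, and it is here that the precise powers of $\log\log x$ and the interplay of the two mechanisms have to be reconciled.

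Finally, the lower bound on $q$. Now $q\leq Q_1$ (as in \eqref{defQ1}), so Theorem \ref{thm1} applies; fix $\varepsilon>0$ and take an integer $J>16/\varepsilon^2$, so that $1-1/\sqrt J>1-\varepsilon/4$. Each of the $J-1$ main terms there is $\ll x/\sqrt{q(1+|\beta|x)}\leq x/\sqrt q$, and, using the bound on $|\beta|$ just established so that $1+|\beta|x\ll(\log\log x)^{O(1)}$, the error term \eqref{defErrJ} satisfies
\[
(1+|\beta|x)\,\mathrm{Err}_J(x,q)\ll \frac{x(\log\log x)^{O(1)}}{(\log x)^{1-\varepsilon/4}}.
\]
If moreover $q\leq(\log x)^{2-\varepsilon}$, then $x/\sqrt q\geq x/(\log x)^{1-\varepsilon/2}$, which dominates this error term since $(\log x)^{\varepsilon/4}$ beats $(\log\log x)^{O(1)}$; hence $R_f(\alpha,x)\ll x/\sqrt q\ll x/\log x+x/\sqrt q$ and \eqref{1.2} holds. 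Therefore failure of \eqref{1.2} forces $q\geq(\log x)^{2-\varepsilon}$ for every fixed $\varepsilon>0$, i.e.\ $q\geq(\log x)^{2+o(1)}$. Combining the three parts yields \eqref{qrange}.
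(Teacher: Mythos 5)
Your overall strategy matches the paper's: use Bachmann's bound \eqref{1.1} to obtain the upper bound on $q$ and the bound on $|\beta|$, then use Theorem \ref{thm1} to obtain the lower bound on $q$. Your Part~1 (upper bound on $q$) and Part~3 (lower bound on $q$) are essentially the paper's argument and are correct.

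However, your Part~2 (the bound $|\beta|\leq (\log q\log\log q)/x$) has a genuine gap, and you yourself flag it. The specific false step is the claimed ``contradiction'' when the convergent chain jumps past $Q$: in that case the best-approximation property gives $|\beta|<1/(qQ)=(\log x)^{2+\varepsilon}/(qx)$, but for $q\leq Q_1=(\log x)^2(\log\log x)^{1+\varepsilon}$ one has $(\log x)^{2+\varepsilon}/q\gg\log q\log\log q$, so $1/(qQ)$ is \emph{larger} than the threshold $(\log q\log\log q)/x$ and no contradiction results. Thus the case where the convergent denominator jumps from below $Q_1$ to above $Q$ (skipping the Bachmann window) is not handled. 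The paper avoids this entirely: instead of iterating continued-fraction convergents, it makes a single application of Dirichlet's theorem with the tailored cutoff $Q_3:=x/(q\log q\log\log q)$ (chosen precisely to match the threshold in \eqref{qrange}), producing $b/r$ with $r\leq Q_3$ and $|\alpha-b/r|\leq 1/(rQ_3)$, and then a short triangle-inequality computation, using $|\alpha-a/q|\leq 1/(qQ_2)$ and $|a/q-b/r|\geq 1/(qr)$, shows $r>Q_2/2$. Consequently $R=\min\{r,x/r\}\geq x/Q_3=q\log q\log\log q$, and substituting this into \eqref{1.1} yields precisely $\ll x/\sqrt{q}$, hence \eqref{1.2}. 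If you want to salvage a continued-fraction version, you would have to use the bound $q_{k+1}\leq Q_3$ (which you do note) together with a careful analysis of $R=\min\{q_{k+1},x/q_{k+1}\}\geq x/Q_3$, but the paper's one-shot Dirichlet-plus-triangle-inequality argument is both simpler and complete.
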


 \begin{proof} [Proof Sketch] Let $\varepsilon>0$ be fixed.  Let $Q_1$ be defined by \eqref{defQ1} and
$Q_2=x/(\log x)^3$.  There exists $q\leq Q_2$ such that $|\alpha-a/q|\leq 1/qQ_2$ for some integer $a$. 

If $Q_1<q\leq Q_2$ then \eqref{1.2} follows from \eqref{1.1}.

If $q\leq Q_1$ and $|\alpha-a/q|>(\log q\log\log q)/x=:1/qQ_3$, then there exists $r\leq Q_3$ such that $|\alpha-b/r|\leq
1/rQ_3$ for some integer $b$, and hence $r\ne q$.  Therefore, as $Q_3>2q$,
$$
\frac 1 {qQ_2} + \frac 1 {2qr} > \frac 1 {qQ_2} +  \frac 1 {rQ_3}  \geq |\alpha-a/q| +
|\alpha-b/r| \geq |a/q-b/r|\geq  \frac 1 {qr},
$$
which implies that $r>Q_2/2>Q_1$ and  then  \eqref{1.2}  follows from \eqref{1.1}.

If $q\leq Q_1$ and $|\alpha-a/q|\leq (\log q\log\log q)/x=1/qQ_3$ then we apply Theorem \ref{thm1}.
As $S_{f_j}(x)\ll (\phi(r)/r)x$, the $j$th term is 
\[
\ll            \prod_{p^a\|q, p\nmid r}  \frac{2p}{(p-1)p^{a/2}} \prod_{p^a\|q, \ p^b\| r}  \frac{p^{b/2}}{p^{a/2}}  \cdot \frac x{\sqrt{q}}
\ll   \frac x{\sqrt{q}} .
\]
Moreover in this range the error term is, taking $J>1/\varepsilon^2$,
\[  
\ll   \frac{ x}{ (\log x)^{ 1 - \varepsilon}} + \frac x{\sqrt{q}}      .
\] 
This is $\ll x/\sqrt{q}$ for $q\leq (\log x)^{ 2 - \varepsilon}$. 
\end{proof}

\subsection{Exponential sums over friable numbers} \label{friBd}
We sketch the minor changes needed for the following  slightly stronger estimate than Proposition 1 of~\cite{dlB98}: \ 
If  $|\alpha-a/q|\leq 1/q^{2}$ with $(a,q)=1$ and $q\leq x$ then
\begin{equation}\label{1.5} 
  \sum_{\colt{n\leq x}{P(n)\leq y}}  f(n) \e(\alpha n) \ll 
\sqrt{xy}  + \left( \frac{x}{\sqrt q} + \sqrt{xq\log (2x/q)}\right) \log y  + \frac x
{  \e^{ \{ \frac 12 +o(1) \} \sqrt{\log x \log \log x} }} . 
\end{equation}

\begin{proof}
We follow the proof of Proposition 1 of \cite{dlB98} (though rectify the omission of the $h=0$ term at the end of the long display of equations at the top of page 62), but now choosing
$T=\exp (  \sqrt{\log x \log \log x}) $. The key change that we make is to give the (easily proved) more precise upper bound
$$
\sum_{1\leq h\leq x/K}  \min \left( K, \frac 1{\| \theta h \|}\right) \ll K+ q \log (2x/q) + \frac xq + \frac xK \log 2K
$$
(rather than the typographically simpler  $\ll (K+q+x/q+x/K)\log x$).  The only other change is that we note that the sum of $(xK)^{1/2}$, over $K$ of the form $2^jT$ with $j$ an integer, and $2^jT\leq y$,
is $\ll (xy)^{1/2}$.
\end{proof}

 It would be of interest to remove further logs from the right-hand side of \eqref{1.5}: For instance, based on the case $y=x$ we believe, based on \eqref{1.1}, that we should have something like
$\sqrt{xy}/\log y$ in place of the $\sqrt{xy}$ term.

\end{document}